\tikzset{
  basic/.style  = {draw, text width=2cm, drop shadow, font=\sffamily, rectangle},
  root/.style   = {basic, rounded corners=2pt, thin, align=center,
                   fill=white!30, text width=12em},
  level 2/.style = {basic, rounded corners=6pt, thin,align=center, fill=white!60,
                   text width=9em},
  level 3/.style = {basic, rounded corners=6pt, thin,align=center, fill=white,
                   text width=9em}
}
\newcommand\mL{L\kern-0.08cm\char39}
\newtheorem{thm}{Theorem}[section]
\newtheorem{lem}[thm]{Lemma}
\newtheorem{prop}[thm]{Proposition}
\newtheorem{cor}[thm]{Corollary}
\newcommand{\diam}{{\mathrm{diam}}}
\newcommand{\dist}{{\mathrm{dist}}}
\newcommand{\Tran}{{\mathrm{Tran}}}
\newcommand{\orb}{{\mathrm{orb}}}
\newcommand{\sep}{\mathop{\rm sep}}
\newcommand{\Prox}{{\mathrm{Prox}}}
\numberwithin{equation}{section}
\begin{document}

\title{Dynamical compactness and sensitivity}

\subjclass[2010]{}
\keywords{}

\author{Wen Huang, Danylo Khilko, Sergi{\u\i} Kolyada and Guohua Zhang}

\address{Department of Mathematics, Sichuan University,
Chengdu, Sichuan 610064, China}

\address{School of Mathematical Sciences, University of Science and Technology of
China, Hefei, Anhui 230026, China}

\email{wenh@mail.ustc.edu.cn}

\address{Faculty of Mechanics and Mathematics, National Taras Shevchenko University of Kyiv,
Academician Glushkov prospectus 4-b, 03127 Kyiv, Ukraine}

\email{dkhilko@ukr.net}

\address{Institute of Mathematics, NASU, Tereshchenkivs'ka 3, 01601 Kyiv, Ukraine}

\email{skolyada@imath.kiev.ua}

\address{School of Mathematical Sciences and LMNS, Fudan University and Shanghai Center for Mathematical Sciences, Shanghai 200433, China}

\email{chiaths.zhang@gmail.com}

\begin{abstract}
To link the Auslander point dynamics property with topological transitivity, in this paper we introduce dynamically compact systems as a new concept of a chaotic
dynamical system $(X,T)$ given by a compact metric space $X$ and a continuous
surjective self-map $T:X \to X$. Observe that each weakly mixing system is transitive compact, and we show that any transitive compact M-system is weakly mixing.
Then we discuss the relationships among it and other several stronger forms of sensitivity.
We prove that
any transitive compact system is Li-Yorke sensitive and furthermore multi-sensitive if it is not proximal,
and that any multi-sensitive system has positive topological sequence entropy. Moreover, we show that
multi-sensitivity is equivalent to both thick sensitivity and thickly syndetic sensitivity for M-systems.
We also give a quantitative analysis for multi-sensitivity of a dynamical system.
\end{abstract}

\subjclass[2010]{Primary 37B05; Secondary 54H20}

\keywords{transitive systems, transitive compactness, topological weak mixing, sensitivity, Li-Yorke sensitivity, multi-sensitivity, Lyapunov numbers}

\dedicatory{This paper is dedicated to Professor \mL ubom\'\i r Snoha on the occasion of his 60th birthday.}

\maketitle

\markboth{}{}


\section{Introduction}

By a (\emph{topological}) \emph{dynamical system} we mean a compact metric space with a continuous self-surjection. We say it \emph{trivial} if in addition the space is a singleton.
\emph{Throughout this paper, we are only interested in a nontrivial dynamical system, where the state space is a compact metric space without isolated
points,} and let $(X,T)$ be such a dynamical system with metric $d$.


Let $\mathbb{Z}_{+}$ be the set of all nonnegative integers and $\mathbb{N}$ the set of all positive integers.
Recall that the system $(X, T)$ is (\textit{topologically}) \emph{transitive} if $N_T (U_1, U_2)= \{n\in \mathbb{Z}_+: U_1\cap T^{-n}U_2\neq \varnothing \}$
 is nonempty for any \emph{opene}\footnote{Because we so often have to refer to
open, nonempty subsets, we will call such subsets \emph{opene}.} subsets $U_1, U_2\subset X$, equivalently, $N_T (U_1, U_2)$
 is infinite for any opene subsets $U_1, U_2\subset X$.

H. Furstenberg started a systematic study of transitive dynamical
systems in his paper on disjointness in topological dynamics and ergodic
theory \cite{Furstenberg1967}, and the theory was further developed in \cite{FW1978} and \cite{Furstenberg1981}.
The main motivation for this paper comes from \cite{Sn1}, \cite{Akin1997}, \cite{JiSn2003}, \cite{AK}, \cite{Glasner2004}, \cite{HSY2004}, \cite{ShYe2004}, \cite{Shao2006}
and recent papers \cite{Li}, \cite{OZ} and \cite{LiYe}, which discusses a dynamical property called transitive compactness examined firstly for weakly mixing systems in \cite{AK}: transitive compactness is quite related to but different from the property of transitivity, it will be equivalent to weak mixing under some weak conditions, and it presents some kind of sensitivity of the system.


Before going on, let us recall some notions and facts from topological dynamics.
Denote by $\mathcal{P} =
\mathcal{P}(\mathbb{Z}_{+})$ the set of all subsets of $\mathbb{Z}_{+}$.
A subset $\mathcal{F}$ of $\mathcal{P}$  is a (\emph{Furstenberg}) \emph{family}, if it
is hereditary upward, that is, $F_1 \subset F_2$ and $F_1 \in \mathcal{F}$ imply $F_2 \in \mathcal{F}$.
For a family $\mathcal{F}$,
the \emph{dual family} of $\mathcal{F}$,
denoted by $k\mathcal{F}$, is defined as $$\{F\in \mathcal{P}: F\cap F'\ne \varnothing \mbox{ for any }
F' \in \mathcal{F}\}.$$
Denote by $\mathcal{B}$ the family of all infinite
subsets of $\mathbb{Z}_{+}$.
Let $F\in \mathcal{P}$. Recall that a subset $F$ is \emph{thick} if it contains arbitrarily long runs of positive integers.
Each element of the dual family of the thick family is said to be \emph{syndetic}, equivalently, $F$ is syndetic if and only if there is $N\in \mathbb{N}$ such that $\{i, i + 1, \dots , i + N \} \cap F \not = \varnothing$
for every $i \in \mathbb{Z}_{+}$. We say that $F$ is \emph{thickly syndetic} if for every $N\in \mathbb{N}$ the positions where
length $N$ runs begin form a syndetic set.

For any $F\in \mathcal{P}$ we will define $T^{F}x= \{ T^{i}x : i \in F \}.$ Let us also define
$ N_T(x,G)=\{n\in \mathbb{Z}_{+}: T^nx \in G \}$ for every point $x\in X$
and each subset $G$ of $X$. Let $\mathcal{F}$ be a family and $x\in X$, \emph{the  $\omega$-limit set of point $x$  with respect to  the family $\mathcal{F}$} (see \cite{Akin1997}), or shortly \emph{the  $\omega_\mathcal{F}$-limit set of point $x$}, denoted by $\omega_{\mathcal{F}}(x)$, is defined as
$$\bigcap_{F \in \mathcal{F}} \overline{T^{F}x}
= \{ z\in X: N_T(x,G) \in k\mathcal{F} \ \mbox{for every neighborhood} \ G \ \mbox{of} \ z \}.$$
Recall that \emph{the $\omega$-limit set of $x$}, denoted by $\omega_{T}(x)$, is defined as
$$\bigcap_{n=1}^\infty \overline{\{T^kx: k\ge n\}}
= \{ z\in X: N_T(x,G) \in \mathcal{B}  \ \mbox{for every neighborhood} \ G \ \mbox{of} \ z \}. $$
In particular, $\omega_{\mathcal{F}}(x)$ is a subset of $\omega_{T}(x)$.

\medskip

We say that the system $(X,T)$ is \emph{compact with respect to the family
$\mathcal{F}$}, or shortly
\emph{dynamically compact}, if the $\omega_\mathcal{F}$-limit set
$\omega_{\mathcal{F}}(x)$ is nonempty for all $x\in X$.
In this paper we consider one of such dynamical compactness --- transitive compactness, and its
relations with well-known chaotic properties of
dynamical systems.

Recall that a pair of points $x, y \in X$ is \emph{proximal} if $\liminf_{n\rightarrow \infty} d (T^n x, T^n y)= 0$.
In this case each of points from the pair is said to be \emph{proximal} to another. Denote by $\Prox _T (X)$ the set of all proximal pairs of points.
For each $x\in X$, denote by $\Prox _T (x)$, called the \emph{proximal cell of $x$}, the set of all points
which are proximal to $x$.  It is a deep theorem of Auslander that every proximal cell contains a minimal point (i.e. point
from a minimal set) \cite{Auslander1988}.

The following property of dynamical systems, which was first examined in \cite{AK} for topologically weakly mixing
systems, links the Auslander point dynamics
property with topological transitivity. Let $\mathcal{N}_T$ be the set of all subsets of $\mathbb{Z}_+$ containing some $N_T(U,V)$, where $U,V$ are opene subsets of $X$.
A dynamical system $(X, T)$ is called
\emph{transitive compact}, if for any point $x\in X$ the $\omega_{\mathcal{N}_T}$-limit set
$\omega_{\mathcal{N}_T}(x)$ is nonempty,
in other words,
for any point $x\in X$ there exists
a point $z \in X$ such that $$ N_T(x,G) \cap N_T(U,V)\ne \varnothing$$
 for any  neighborhood
$G$ of $z$ and any opene subsets $U,V$ of $X$.

Obviously that
a transitive compact system is topologically transitive.
What is the difference between a compact, topologically transitive system and a dynamically
compact, topologically transitive system? Roughly speaking the first system is of the form  $(X,T, \mathbb{Z}_+)$
and the second one --- $(X,T,\mathcal{F})$.
 In the second system we dynamically changed the time for possible
behaviour of the orbits (by using $\mathcal{N}_T$). The area of dynamical systems where one investigates dynamical properties that can be
described in topological terms is called topological dynamics. Investigating the topological
properties of maps that can be described in dynamical terms is in a sense the
opposite idea. This area is called \emph{dynamical topology} (see \cite{KMS2014}). Some results of this paper can be considered as a
contribution to dynamical topology.

We will show that the $\omega_{\mathcal{N}_T}$-limit sets are positively invariant closed subsets of the system (Lemma \ref{0200}) and in fact invariant closed subsets if the system is weakly mixing (Proposition \ref{201509042231}).
Even though any transitive compact M-system is weakly mixing
(Corollary \ref{201509061135}) and observe that each weakly mixing system is transitive compact, there exist non totally transitive (and hence non weakly mixing), transitive compact systems in both proximal and non-proximal cases (Theorem \ref{0206}).


As shown by Theorem \ref{0208} and Theorem \ref{0206a}, a transitive compact system presents some kinds of sensitivity.
The notion of sensitivity (sensitive dependence on initial conditions) was first used by Ruelle \cite{Ru}, which captures the idea that in a chaotic system a small change in the initial condition can cause a big change in the trajectory.
Since then, many authors studied different properties related
to sensitivity (cf. \cite{GlasnerMaon1989}, \cite{AAB1993}, \cite{GlasnerWeiss1993}, \cite{Akin1997}, \cite{AkinGlasner2001}, \cite{AK}, \cite{ABC}, \cite{HLY}, \cite{LiTuYe}). For the recent development of sensitivity in topological dynamics see for example the survey \cite{LiYe} by Li and Ye.

According to the works by Guckenheimer \cite{Gu}, Auslander and Yorke \cite{AuslanderYorke} a dynamical
system $(X,T)$ is
{\it sensitive} if there exists $\delta> 0$ such that for every $x\in X$ and every
neighborhood $U_x$ of $x$, there exist $y\in U_x$ and $n\in \mathbb{N}$ with $d(T^n x,T^n y)> \delta$.
Such a $\delta$ is also called a \emph{sensitive constant} of the system $(X, T)$.
Recently in \cite{Subrahmonian2007} Moothathu initiated a way to measure the sensitivity of a system, by checking how large is the set of
nonnegative integers for which the sensitivity still occurs. Following \cite{Subrahmonian2007} and \cite{liuheng}, we recall the
definitions of some stronger versions of sensitivity.

Let $\delta> 0$. For any opene set $U\subset X$ we define
$$ S_T (U, \delta)= \{n\in \mathbb{Z}_+: \mbox { there are } x_1, x_2\in U\ \mbox{such that}
\ d (T^n x_1, T^n x_2)>
\delta\}.$$
A dynamical system $(X,T)$ is \emph{thickly sensitive} (\emph{thickly syndetically sensitive}, respectively) if there exists $\delta> 0$ such that
$S_T (U, \delta)$
is thick (thickly syndetic, respectively) for any opene $U\subset X$;
and is \emph{multi-sensitive}
if there exists $\delta> 0$ such that $\bigcap_{i= 1}^k S_T (U_i, \delta)\neq \varnothing$ for any finite collection of
opene $U_1, \dots, U_k\subset X$.
Remark that: a non-minimal M-system is thickly syndetically sensitive \cite[Theorem 8]{liuheng}, and any thickly syndetically sensitive system is multi-sensitive because the intersection of finitely many thickly syndetically sets is also thickly syndetic; furthermore, the first, third and fourth authors of the present paper proved that any multi-sensitive system is thickly sensitive and these two sensitivities are equivalent for transitive systems \cite[Proposition 3.2]{HKZ}.

In this paper, we shall show that: multi-sensitivity is equivalent to both thick sensitivity and thickly syndetic sensitivity for M-systems (Theorem \ref{1407090448}), any multi-sensitive system has positive topological sequence entropy (Proposition \ref{1407091722}) and any transitive compact system is Li-Yorke sensitive (Theorem \ref{0208}) and furthermore multi-sensitive if it is not proximal (Theorem \ref{0206a}). Recall that we have assumed that we are only interested in nontrivial dynamical systems.


Observe that in \cite{Kolyada2013} Rybak and the third author of the paper initiated another way to measure the sensitivity of a
system, that is, gave quantitative measures of the sensitivity of a dynamical system by introducing the Lyapunov numbers:
\begin{eqnarray*}
\mathbb{L}_{r} & = & \sup\{\delta: \mbox{ for every } x\in X
\mbox{ and every open neighborhood } U_x \mbox{ of } x \mbox{ there } \\ & & \hskip 35pt \mbox{ exist } y\in U_x  \mbox{ and a nonnegative integer }
n  \mbox{ with } \\ & & \hskip 35pt d (T^n x,T^n y)> \delta
 \};
 \end{eqnarray*}
 \begin{eqnarray*}
\overline{\mathbb{L}}_{r} &=& \sup\{\delta: \mbox{ for every } x\in X \mbox{ and every open neighborhood }
U_x \mbox{ of } x \mbox { there } \\ & & \hskip 35pt
\mbox{exists } y\in U_x \mbox{ with }
\limsup_{n\to \infty}d (T^n x,T^n y)> \delta\}; \\
\mathbb{L}_{d} & = & \sup\{\delta: \mbox{ in any opene }
U\subset X \mbox{ there exist } x,y\in U  \mbox{ and a nonnegative} \\
& & \hskip 35pt \mbox {integer } n \mbox{ with }  d(T^n x,T^n y)>
\delta \}; \\
\overline{\mathbb{L}}_{d}  &=& \sup\{\delta: \mbox{ in any
opene } U\subset X \mbox{ there exist } x,y\in U
\mbox{ with } \\ & & \hskip 35pt
\limsup_{n\to \infty}d(T^n x,T^n y)> \delta \}.
\end{eqnarray*}
Here we set $\sup \varnothing = 0$ by convention.
 Various definitions of sensitivity, formally give us different Lyapunov numbers. Nevertheless, as was shown in \cite{Kolyada2013}, for minimal topologically weakly mixing systems
all these Lyapunov numbers are the same.

The motivation of \cite{Kolyada2013} comes from the
following proposition according to \cite{AK}:

\begin{prop} \label{prop}
The following conditions are
equivalent\emph{:}
\begin{itemize}

\item[1.] $(X,T)$ is sensitive.

\item[2.] There exists $\delta> 0$ such that for every $x\in X$ and every
neighborhood $U_x$ of $x$, there exists $y\in U_x$ with $\limsup_{n\to \infty}d(T^n x,T^n y)> \delta$.

\item[3.] There exists $\delta> 0$ such that in any opene
$U$ in $X$ there are $x, y\in U$ and a nonnegative integer $n$ with $d(T^n x,T^n y)> \delta$.

\item[4.] There exists $\delta> 0$ such that in any opene $U\subset X$ there are $x,y\in U$ with $\limsup_{n\to \infty}d(T^n x,T^n y)> \delta$.
\end{itemize}
\end{prop}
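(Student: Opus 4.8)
The plan is to prove the cycle $1 \Rightarrow 3 \Rightarrow 4 \Rightarrow 2 \Rightarrow 1$, in which only $3 \Rightarrow 4$ is substantial; the other three links are routine and cost only a halving of the constant. For $1 \Rightarrow 3$, given an opene $U$ I pick any $x \in U$ and apply sensitivity with $U$ playing the role of the neighborhood $U_x$, which produces the required pair $x, y \in U$. For $2 \Rightarrow 1$, a positive $\limsup$ forces $d(T^n x, T^n y) > \delta$ for infinitely many $n$, in particular for some $n \in \mathbb{N}$. For $4 \Rightarrow 2$, given $x$ and a neighborhood $U_x$, condition $4$ yields $x', y' \in U_x$ with $\limsup_n d(T^n x', T^n y') > \delta$; for each of the infinitely many $n$ realizing this the triangle inequality gives $\max\{d(T^n x, T^n x'), d(T^n x, T^n y')\} > \delta/2$, so by pigeonhole one of $x', y'$ satisfies $d(T^n x, T^n \cdot) > \delta/2$ for infinitely many $n$ and serves as the point $y$ of condition $2$ with constant, say, $\delta/3$. (This same triangle-inequality estimate is what makes the reverse implications routine as well, so the four conditions change the constant only by a bounded factor.)

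The heart of the matter is $3 \Rightarrow 4$, where I must upgrade a single separation time to a large $\limsup$. Assume condition $3$ with constant $\delta$ and write $\diam(T^n U) = \sup_{x_1,x_2 \in U} d(T^n x_1, T^n x_2)$, so that $n \in S_T(U,\delta)$ exactly when $\diam(T^n U) > \delta$. First I claim $S_T(U,\delta)$ is \emph{infinite} for every opene $U$. If not, there are an opene $U$ and an $N_0$ with $\diam(T^n U) \le \delta$ for all $n \ge N_0$. Fixing $x_0 \in U$ and using continuity of the finitely many maps $T^0, \dots, T^{N_0-1}$ at $x_0$, I can choose an opene $V$ with $x_0 \in V \subseteq U$ so small that $\diam(T^n V) \le \delta$ for every $n < N_0$ as well; since $V \subseteq U$ controls the tail, $\diam(T^n V) \le \delta$ for all $n \ge 0$, i.e. $S_T(V,\delta) = \varnothing$, contradicting condition $3$. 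Hence separation of size $\delta$ recurs at arbitrarily large times in every opene set.

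The second ingredient converts same-set separation into separation across two prescribed opene sets. Fix opene $A, B$ and $N \in \mathbb{N}$. By the previous paragraph there is $n \ge N$ with $\diam(T^n A) > \delta$, so there are $a_1, a_2 \in A$ with $d(T^n a_1, T^n a_2) > \delta$; for \emph{any} $b \in B$ the triangle inequality then forces $d(T^n a_i, T^n b) > \delta/2$ for at least one $i$. By continuity I obtain opene $A' \subseteq A$ and $B' \subseteq B$ and the single time $n \ge N$ with $d(T^n a, T^n b) > \delta/2$ for all $(a,b) \in A' \times B'$. Iterating with a shrinking sequence of nested opene boxes $A_{k} \times B_{k} \subseteq A_{k-1} \times B_{k-1} \subseteq U \times U$ (arranging $\overline{A_{k}} \subseteq A_{k-1}$, $\overline{B_{k}} \subseteq B_{k-1}$, and $A_0 = B_0 = U$) and times $n_1 < n_2 < \cdots$, any point $(x,y) \in \bigcap_k \overline{A_k \times B_k}$ lies in $U \times U$ and satisfies $d(T^{n_k} x, T^{n_k} y) > \delta/2$ for every $k$, whence $\limsup_n d(T^n x, T^n y) \ge \delta/2 > \delta/3$. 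Thus condition $4$ holds with constant $\delta/3$. (Equivalently, each set $W_N = \{(x,y) \in U \times U : d(T^n x, T^n y) > \delta/2 \text{ for some } n \ge N\}$ is open and, by the cross-separation step, dense, so one may instead invoke the Baire category theorem in $U \times U$.)

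I expect the only genuine obstacle to be $3 \Rightarrow 4$, and within it the two points just isolated: that finite-time sensitivity already forces separation to \emph{recur} at arbitrarily large times — proved by the eventual-contraction contradiction, where shrinking $U$ across the finitely many initial iterates is essential — and that a large-diameter image automatically separates from an arbitrary target point, which is exactly what lets the nested construction (or the density required for Baire) cross between two different opene sets. The bookkeeping with the constants is routine throughout.
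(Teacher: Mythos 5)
Your proof is correct. A caveat on the comparison: the paper does not actually prove Proposition \ref{prop} itself (it is quoted from \cite{AK}); what the paper proves is the multi-sensitive analogue, Proposition \ref{multi-proposition}, obtained from the trivial inequalities \eqref{1407052335} together with the key Lemma \ref{1407052201}, $\mathbb{L}_{m,d}\le 2\,\overline{\mathbb{L}}_{m,r}$, and specialising that machinery to $k=1$ gives exactly the four conditions of Proposition \ref{prop} as positivity of $\mathbb{L}_{r}$, $\overline{\mathbb{L}}_{r}$, $\mathbb{L}_{d}$, $\overline{\mathbb{L}}_{d}$. Measured against that in-paper proof, your argument shares the same engine --- the triangle-inequality pigeonhole (a $\delta$-separated pair forces $\delta/2$-separation from any third point) followed by a nested sequence of opene sets with separation at strictly increasing times, and compactness of the nested closures --- but the organisation genuinely differs. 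You prove the cycle $1\Rightarrow 3\Rightarrow 4\Rightarrow 2\Rightarrow 1$, where the hard step $3\Rightarrow 4$ produces a pair with \emph{both} coordinates variable and a final pigeonhole then yields condition 2; the paper's lemma goes straight from condition 3 to condition 2, keeping the centers $x_i$ fixed throughout and performing the pigeonhole against them at every stage of the induction. Second, you need a standalone lemma that $S_T(U,\delta)$ is infinite (your eventual-contraction argument using continuity of the first $N_0$ iterates), whereas the paper makes this unnecessary by building the growth of times into the construction: requiring $\max_{0\le n\le n_{m-1}}\diam(T^n V_{m,i})\le \epsilon/2$ forces the next separation time to exceed $n_{m-1}$ automatically. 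Third, the paper's route is quantitative: it preserves the factor-2 relation $\mathbb{L}_{m,d}\le 2\,\overline{\mathbb{L}}_{m,r}$ that feeds the Lyapunov-number analysis of Section 5, while your constants degrade to $\delta/3$, which suffices for the equivalence but not for those finer statements. In exchange, your decomposition is more modular and more elementary, and your Baire-category variant (the sets $W_N$ open and dense in $U\times U$) is a clean repackaging that additionally shows the pairs witnessing condition 4 form a residual subset of $U\times U$, a fact the paper's single-point construction does not record.
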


Note that an analogue of Proposition \ref{prop} can be obtained for multi-sensitive systems (Proposition \ref{multi-proposition}). Thus following the line in \cite{Kolyada2013}, we can give quantitative measures for the multi-sensitivity of a dynamical system. Precisely, we introduce the following several quantities (we still call them \emph{Lyapunov numbers}):
\begin{eqnarray*}
\mathbb{L}_{m,r}&= & \sup \{\delta:  \text{ for any finite collection } x_1, \dots, x_k\ \text{ of points in $X$ and }\\ & & \hskip 35pt
\text{
any system of open neighborhoods } U_i\ni x_i\ (i= 1, \dots, k); \\ & & \hskip 35pt
\text{there exist points }
y_i\in U_i\ \text {and a nonnegative integer } n \\ & & \hskip 35pt
\text{with }\
\min_{1\le i\le k} d (T^n x_i, T^n y_i)> \delta  \};\\
\overline{\mathbb{L}}_{m,r}&= & \sup \{\delta:  \text{ for any finite collection } x_1, \dots, x_k\ \text{of points in $X$ and } \\ & & \hskip 35pt
\text{any system of open neighborhoods } U_i\ni x_i\ (i= 1, \dots, k); \\ & & \hskip 35pt
\text{there exist points } y_i\in U_i\
\text{with } \\ & & \hskip 35pt \limsup_{n\rightarrow \infty}
\min_{1\le i\le k} d (T^n x_i, T^n y_i)> \delta  \};\\
\mathbb{L}_{m,d}&= & \sup \{\delta:  \text{ for any finite collection $U_1, \dots, U_k$ of opene subsets of $X$,} \\
 & &\hskip 35pt \text{there exist } x_i, y_i\in U_i \text { and  a nonnegative integer } n  \text{  with } \\ & & \hskip 35pt  \min_{1\le i\le k} d (T^n x_i, T^n y_i)> \delta \};
 \\
\overline{\mathbb{L}}_{m, d} &= & \sup \{\delta: \text{ for any finite collection $U_1, \dots, U_k$ of opene subsets of $X$,} \\
 & &\hskip 35pt \text{there exist } x_i, y_i\in U_i \text{ with } \limsup_{n\rightarrow \infty} \min_{1\le i\le k} d (T^n x_i, T^n y_i)> \delta \}.
\end{eqnarray*}
It is not striking that these new Lyapunov numbers $\mathbb{L}_{m,r},
\overline{\mathbb{L}}_{m,r}, \mathbb{L}_{m,d}$ and $\overline{\mathbb{L}}_{m, d}$ are all related to each other.
In particular, we prove that:  $\mathbb{L}_{m,r}= \overline{\mathbb{L}}_{m,r}$ for a system with a dense set of
 distal points (Proposition \ref{1407052219}); $\mathbb{L}_{m,d}= \overline{\mathbb{L}}_{m,d}$ for a transitive system (Lemma \ref{1407241808}) and $\mathbb{L}_{m,d}= \overline{\mathbb{L}}_{m,d}= \diam (X)$
 and $\mathbb{L}_{m,r}= \overline{\mathbb{L}}_{m,r}> 0$ for a nontrivial weakly mixing system (Proposition \ref{1407062303}).


The paper is organized as follows. In section 2 we recall from topological dynamics some basic concepts and properties used in later discussions. In section 3
we study transitive compact systems. Observe that any weakly mixing system is transitive compact, we will show that any transitive compact M-system is weakly mixing (Corollary \ref{201509061135}), and provide non totally transitive (and hence non weakly mixing), transitive compact systems in both proximal and non-proximal cases (Theorem \ref{0206}).
In section 4 we explore the implications among transitive compactness and other various stronger forms of sensitivity, in particular, we show that
 multi-sensitivity is equivalent to both thick sensitivity and thickly syndetic sensitivity for M-systems (Theorem \ref{1407090448}), and that any transitive compact system is Li-Yorke sensitive (Theorem \ref{0208}) and furthermore multi-sensitive if it is not proximal (Theorem \ref{0206a}).
 In section 5 we carry out quantitative analysis for multi-sensitivity by studying the relationships between the above introduced four new Lyapunov numbers.
In section 6 we explore further study for multi-sensitivity, and prove that any multi-sensitive system has positive topological sequence entropy (Proposition \ref{1407091722}).
In the last section we consider some related properties and open questions.
\vskip 10pt

\noindent {\bf Acknowledgements.}  The first, third and fourth authors acknowledge the hospitality
of the Max-Planck-Institute f\"ur Mathematik (MPIM) in Bonn, where a
substantial part of this paper was written during the Activity ``Dynamics and Numbers'', June -- July 2014.
We thank MPIM for providing an ideal setting. We also thank Xiangdong Ye
for sharing his joint work \cite{LiYe, YeYu}, and thank the referee for comments that
have resulted in substantial improvements to this paper.

The first author was supported by NNSF of China (11225105, 11431012), the fourth author
was supported by NNSF of China (11271078).

\section{Preliminaries}

In this section we recall standard concepts and results used in later discussions.

\subsection{Transitivity, mixing and minimality}

 A point $x\in X$ is called \emph{fixed} if $T x= x$, \emph{periodic} if $T^n x= x$ for some $n\in \mathbb{N}$, and \emph{transitive} if its \emph{orbit} $\orb_T (x)=
   \{T^n x: n= 0, 1, 2, ... \}$ is dense in $X$. Denote by $\Tran (X, T)$ the set of all transitive
    points of $(X, T)$. Since $T$ is surjective, the system $(X, T)$ is transitive if and only if $\Tran (X, T)$ is a dense $G_\delta$ subset of $X$.

    The system $(X, T)$ is called \emph{minimal} if $\Tran(X,T) = X$.
    In general, a subset $A$ of $X$ is \emph{$T$-invariant} if $TA = A$, and \emph{positively $T$-invariant} if $T A\subset A$.
    If $A$ is a closed, nonempty, $T$-invariant subset then $(A,T|_A)$ is called the associated \emph{subsystem}.
    A \emph{minimal subset} of $X$ is a nonempty, closed, $T$-invariant subset such that the associated subsystem is minimal.
    Clearly, $(X,T)$ is minimal if and only if it admits no a proper, closed, nonempty, positively $T$-invariant subset.
     A point $x \in X$ is called \emph{minimal} if it lies in some minimal subset. In this case, in order to emphasize the underlying system $(X, T)$ we also say that $x\in X$ is a \emph{minimal point of $(X, T)$}.
     Zorn's Lemma implies that every closed, nonempty, positively $T$-invariant set contains a minimal set.
Note that by the classic result of Gottschalk a point
 $x\in X$ is minimal if and only if $N_T (x, U)= \{n\in \mathbb{Z}_+: T^n x\in U\}$ is syndetic for any neighborhood $U$ of $x$, and
 by \cite[Corollary 3.2]{Ye1992} for any $k\in \mathbb{N}$ a point is
   a minimal point of $(X, T)$ if and only if it is a minimal point of $(X, T^k)$, here $T^k$ denotes the composition map of $k$ copies of the map $T$.

     An \emph{M-system} is a transitive system
      with a dense set of minimal points \cite{GlasnerWeiss1993}.
  Let $(X, T)$ and $(Y, S)$ be topological dynamical systems and $k\in \mathbb{N}$. The product system $(X\times Y, T\times S)$ is defined naturally, and denote by $(X^k, T^{(k)})$ the product system of $k$ copies of the system $(X, T)$ for each $k\in \mathbb{N}$.

     Recall that the system $(X, T)$ is \emph{totally transitive} if $(X, T^k)$ is transitive for each $k\in \mathbb{N}$; is
(\textit{topologically}) \emph{weakly mixing} if the product system $(X\times X, T\times T)$ is
 transitive; and is \emph{topologically mixing} if $N_T(U,V)$ is cofinite for any opene sets $U,V$ in $X$, where we say that $\mathcal{S}\subset \mathbb{Z}_+$ is \emph{cofinite}
  if $\mathcal{S}\supset \{m, m+ 1, m+ 2, \dots\}$ for some $m\in \mathbb{N}$. Note that the system $(X, T)$ is weakly mixing if and only if
$N_T(U,V)$ is a thick set for any opene sets $U,V$ in $X$ by \cite{Furstenberg1967} and \cite{Petersen}, in particular, any weakly mixing system is totally transitive.

We say that $(X, T)$ is \emph{topologically ergodic} (\emph{thickly syndetically transitive}, respectively) if the set
 $N_T (U, V)$ is syndetic (thickly syndetic, respectively) for any opene $U, V\subset X$. Note that
any minimal system is topologically ergodic, and a dynamical system $(X, T)$
   is thickly syndetically transitive if and only if $(X, T)$ is not only weakly mixing but also topologically ergodic
    \cite[Theorem 4.7]{HY2002}.


We will say that a point
$x\in X$ is \emph{distal} if it is not proximal to any another point from the closure of the orbit $\{T^n x: n\in \mathbb{Z}_+\}$.
Recall that a dynamical system $(X,T)$ is called \emph{proximal}
if $\Prox_T (X)= X^2$, and is \emph{distal} if any point of $X$ is distal. The system $(X,T)$ is
proximal if and only if $(X,T)$ has the unique fixed point, which is the only minimal point of $(X,T)$ (e.g. see \cite{AK}).

\subsection{Equicontinuity and sensitivity}

 A pair of points  $x, y\in X$ is called a \emph{Li-Yorke pair} if
$\liminf_{n\rightarrow \infty} d (T^n x, T^n y)= 0$ while
$\limsup_{n\rightarrow \infty} d (T^n x, T^n y)> 0$, and a dynamical system $(X, T)$ is called \emph{spatio-temporally chaotic} if for any point $x\in X$ and its neighborhood $U_x$
there is a point $y\in U_x$ such that the pair $x, y$ is Li-Yorke \cite{BGKM2002}.
The system $(X,T)$ is \emph{cofinitely sensitive} if there exists $\delta> 0$ such that
$S_T (U, \delta)$
is cofinite for any opene $U\subset X$ (see \cite{Subrahmonian2007}), and is \emph{Li-Yorke sensitive} if there exists $\delta >0$ such that for any point $x\in X$ and its neighborhood $U_x$
there is a point $y\in U_x$ with $\liminf_{n\rightarrow \infty} d (T^n x, T^n y)= 0$ while
$\limsup_{n\rightarrow \infty} d (T^n x, T^n y)> \delta$ (see \cite{AK}). It is clear that Li-Yorke sensitivity is much stronger than both sensitivity
and spatio-temporal chaos.


The Lyapunov stability or, in other words, equicontinuity is the opposite to the
notion of sensitivity.
 A dynamical system $(X, T)$ is \emph{equicontinuous} if $\{T^{n} : n \geq 0 \}$ is
equicontinuous at any point of $X$, equivalently, for every $\epsilon > 0$ there exists a $\delta > 0$ such that $d(x,x') < \delta$
implies $d (T^n x, T^n x')< \epsilon$ for any $n\in \mathbb{N}$. Remark that each dynamical system admits a maximal equicontinuous factor.

Following \cite{HKZ}, recall that $x\in X$ is \emph{syndetically equicontinuous} if for any $\epsilon> 0$ there exist open $U\subset X$
 containing $x$ and a syndetic set $\mathcal{N}\subset \mathbb{N}$ such that $d (T^n x, T^n x')\le \epsilon$ whenever $x'\in U$
 and $n\in \mathcal{N}$.
 Denote by $\text{Eq}_{\text{syn}} (X, T)$ the set of all syndetically equicontinuous points of $(X, T)$.

\subsection{Other concepts}

  Recall that $\mathcal{S}\subset \mathbb{N}$ is an \emph{IP set}
 if there exists $\{p_k: k\in \mathbb{N}\}\subset \mathbb{N}$ with $\{p_{i_1}+ \dots+ p_{i_k}: k\in \mathbb{N}\
 \text{and}\ i_1< \dots< i_k\}\subset \mathcal{S}$. Denote by $\mathcal{F}_{ip}$ the family of all IP sets. Notice that for an IP set $\mathcal{S}$, $\mathcal{S}= \mathcal{S}_1\cup
   \mathcal{S}_2$ implies that either $\mathcal{S}_1$ or $\mathcal{S}_2$ is an IP set by Hindman's theorem (see for
    example \cite[Theorem 8.12]{Furstenberg1981}).

Recall that $\mathcal{S}\subset \mathbb{N}$ is an \emph{IP$^*$ set} if $\mathcal{S}\cap \mathcal{T}\neq \varnothing $
  for each IP set $\mathcal{T}\subset \mathbb{N}$. It is easy to see that the intersection of an IP set and an IP$^*$ set
   is an infinite set.
Note that by \cite[Theorem 9.11]{Furstenberg1981}: $x\in X$
 is distal if and only if $N_T (x, U)$ is an IP$^*$ set for any neighborhood $U$ of $x$; and for distal points $x_i\in X_i$ of the system $(X_i, T_i), i= 1, \dots, k$, the point $(x_1, \dots, x_k)\in
  X_1\times \dots\times X_k$ is also
 a distal point of the system $(X_1\times \dots\times X_k, T_1\times \dots\times T_k)$.


      Let $(X, T)$ and $(Y, S)$ be topological dynamical systems. Recall that by a \emph{factor map} $\pi: (X, T)\rightarrow (Y, S)$ we mean that $\pi: X\rightarrow Y$ is a continuous
 surjection with $\pi\circ T= S\circ \pi$. In this case, we call $\pi: (X, T)\rightarrow (Y, S)$
    an \emph{extension}; and $(X, T)$  an \emph{extension} of
  $(Y, S)$, $(Y, S)$ a \emph{factor} of $(X, T)$. If, additionally, $\pi: X\rightarrow Y$ is almost one-to-one, that is, there exists a dense subset $Y_0\subset Y$
such that $\pi^{- 1} (y)$ is a singleton for each $y\in Y_0$, then we also call $(X, T)$ an \emph{almost one-to-one extension of $(Y, S)$}.
It is easy to show that the transitive compactness property of a dynamical system
 $(X,T)$ is preserved by a factor map.


Let $A$ be a nonempty finite set. We call $A$ the alphabet
and elements of $A$ are symbols. The \emph{full \emph{(}one-side\emph{)} $A$-shift} is defined as
$$\Sigma= \{x = \{x_i\}_{i=0}^\infty: x_i \in A \text{ for all } i \in \mathbb{Z}_+\}.$$
We equip $A$ with the discrete topology and $\Sigma$ with the product topology.
Usually we write an element of $\Sigma$ as $x = \{x_i\}_{i=0}^\infty = x_0 x_1 x_2 x_3 \dots$ The \emph{shift
map} $\sigma : \Sigma \to \Sigma$  is a continuous map given by
$$ x = \{x_i\}_{i=0}^\infty \mapsto \sigma x = \{x_{i+1}\}_{i=0}^\infty .$$
That is, $\sigma (x)$ is the sequence obtained by dropping the first symbol of $x$. A full binary shift is the full $2$-shift.

A block over $\Sigma$ is a finite sequence of symbols and its length
is the number of its symbols. An $n$-block stands for a block of length $n$. The
set of all blocks over $\Sigma$  is denoted by $\Sigma^*$. The concatenation of two
blocks $u = a_1 \dots a_k$ and $v = b_1 \dots b_l$ is the block $uv = a_1 \dots a_k b_1 \dots b_l$. We write
$u^n$ for the concatenation of $n\geq 1$ copies of a block $u$ and $u^\infty$ for the sequence $uuu \dots \in  \Sigma.$ By $x_{[i,j]}$ we denote the block $x_i x_{i+1} \dots x_j$, where $0 \leq i \leq j$ and $x =
\{x_k\}_{k=0}^\infty \in \Sigma$.  $X\subset \Sigma$ is called a \emph{subshift} if it
is a nonempty, closed, $\sigma$-invariant subset of $\Sigma$. A cylinder of an $n$-block $w \in \Sigma^*$
in a subshift  $X$ is the set $C[w] = \{x \in X: x_{[0, n-1]} = w\}$. The collection of all cylinders  is a basis of the topology of
$X$.


Now let us recall
the definition of topological sequence entropy for a dynamical system $(X,T)$ by using  the classical Bowen-Dinaburg definition of topological entropy.
Consider an increasing sequence $\mathcal{N} = n_1< n_2< \dots$ of $\mathbb{N}$ and define $n_0=0$ by convention. For any $k\in \mathbb{N}$ the function
$
d_{k}(x,y) =\max_{0\le j\le k-1} d (T^{n_j}x,T^{n_j}y)
$ defines a metric on $X$ equivalent to $d$.
Now fix an integer $k\geq 1$ and $\epsilon >0$. A subset $E\subset X$ is called \emph{$(k,T,\epsilon)$-separated\/} (with respect
to $\mathcal{N}$), if
for any two distinct points $x,y\in E$, $d_{k}(x,y)>\epsilon$. Denote by $\sep(k,T,\epsilon)$ the maximal cardinality of a
$(k,T,\epsilon)$-separated set in $X$ and $
h_\mathcal{N} (T, \epsilon)=\limsup_{k\to\infty}
\frac{1}{k}\log \sep (k,T,\epsilon)$.
 It is obvious that $h_\mathcal{N} (T, \epsilon_1) \geq h_\mathcal{N} (T, \epsilon_2)$ when $\epsilon_1  < \epsilon_2$.
\emph{The topological sequence entropy} of $(X,T)$ along the sequence $\mathcal{N}$ is
defined by
$$
h_\mathcal{N} (T)=\lim_{\epsilon\to 0}\limsup_{k\to\infty}
\frac{1}{k}\log \sep (k,T,\epsilon).
$$
We also call it the \emph{topological entropy} of $(X, T)$ in the case of $\mathcal{N}= \{1, 2, \dots\}$.

\section {Transitive compactness}

In this section we study basic properties of transitive compact systems. We will show that the $\omega_{\mathcal{N}_T}$-limit sets are positively invariant closed subsets and in fact invariant closed subsets if the system is weakly mixing.
Even though any transitive compact M-system is weakly mixing and observe that each weakly mixing system is transitive compact, there exist non totally transitive (and hence non weakly mixing), transitive compact systems in both proximal and non-proximal cases.


Recall that a dynamical system $(X, T)$ is transitive compact, if
for any point $x\in X$ there exists
a point $y \in X$ such that $ N_T(x,G_y) \cap N_T(U,V)\ne \varnothing$
 for any  neighborhood
$G_y$ of $y$ and any opene subsets $U,V$ of $X$.
Note that if $(X, T)$ is topologically transitive then $N_T (U_1, U_2)$
 is infinite for any opene subsets $U_1, U_2\subset X$. Similarly, we can prove that in the above definition of transitive compactness the set
$N_T(x,G_y) \cap N_T(U,V)$ is always infinite.

\begin{lem} \label{02000}
Let $(X,T)$ be a transitive compact system and $x \in X$.  Then the set $N_T(x, G_y) \cap N_T(U, V)$ is infinite for any $y \in \omega_{\mathcal{N}_T}(x)$, neighbourhood $G_y$ of $y$ and opene sets $U, V$ in $X$.
\end{lem}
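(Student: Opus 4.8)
The plan is to reduce everything to the defining property of $\omega_{\mathcal{N}_T}$-membership and then upgrade ``nonempty'' to ``infinite'' by the mechanism already used for transitive systems. First I would unwind what $y\in\omega_{\mathcal{N}_T}(x)$ means: by definition $y\in\omega_{\mathcal{N}_T}(x)$ exactly when $N_T(x,G_y)\in k\mathcal{N}_T$ for every neighbourhood $G_y$ of $y$; since every set of the form $N_T(U',V')$ (with $U',V'$ opene) lies in $\mathcal{N}_T$, this says precisely that $N_T(x,G_y)\cap N_T(U',V')\neq\varnothing$ for every neighbourhood $G_y$ of $y$ and all opene $U',V'\subset X$. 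Thus the lemma will follow once I show that this intersection, for the given $G_y,U,V$, in fact contains arbitrarily large integers.

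The key device is this: given any $M\in\mathbb{Z}_+$, I want to manufacture opene sets $U^*\subset U$ and $V^*\subset V$ for which $N_T(U^*,V^*)\cap\{0,1,\dots,M\}=\varnothing$. Granting this, I apply the membership property above with $U',V'$ replaced by $U^*,V^*$ to obtain some $n\in N_T(x,G_y)\cap N_T(U^*,V^*)$. Because $U^*\subset U$ and $V^*\subset V$ give $N_T(U^*,V^*)\subset N_T(U,V)$, this $n$ lies in $N_T(x,G_y)\cap N_T(U,V)$, while $n\notin\{0,\dots,M\}$ forces $n>M$. As $M$ is arbitrary, the intersection is unbounded, hence infinite.

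To build $U^*,V^*$ I would argue as follows. Fix any $p\in U$ and consider the finite orbit segment $\{p,Tp,\dots,T^Mp\}$. Since $X$ has no isolated points, the opene set $V$ is infinite, so I may pick $q\in V$ lying outside this finite segment; by regularity of the metric space there is an opene $V^*$ with $q\in V^*\subset\overline{V^*}\subset V$ and $\overline{V^*}\cap\{p,Tp,\dots,T^Mp\}=\varnothing$, i.e. $T^ip\notin\overline{V^*}$ for $0\le i\le M$. Using continuity of $T^0,\dots,T^M$ at $p$, the set $U^*:=U\cap\bigcap_{i=0}^{M}(T^i)^{-1}(X\setminus\overline{V^*})$ is opene and contains $p$, and by construction $T^iU^*\cap V^*=\varnothing$, equivalently $U^*\cap T^{-i}V^*=\varnothing$, for every $0\le i\le M$. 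This is exactly $N_T(U^*,V^*)\cap\{0,\dots,M\}=\varnothing$.

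The point to watch is why one cannot argue more cheaply by a time shift. The transitive-case proof that $N_T(U,V)$ is infinite pulls $V$ back to $T^{-k}V$ and reads off that large integers occur; here, however, the index is shared by the two conditions $T^nx\in G_y$ and $U\cap T^{-n}V\neq\varnothing$, and shifting only $V$ would produce $m\in N_T(x,G_y)$ together with $m+k\in N_T(U,V)$ --- a mismatch --- while shifting $G_y$ as well would require $T^{-k}G_y$ to be a neighbourhood of $y$, i.e. some recurrence of $y$ that is not available for a general point of $\omega_{\mathcal{N}_T}(x)$. Keeping $U^*\subset U$ and $V^*\subset V$ is what lets the same index $n$ serve both conditions while still being forced past $M$; the only delicate inputs are the absence of isolated points (to guarantee $V$ is infinite) and Hausdorffness/regularity (to separate $q$ and $V^*$ from the finite orbit segment).
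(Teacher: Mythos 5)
Your proof is correct and is essentially the paper's own argument in mirror image: the paper assumes the intersection is finite and, using the absence of isolated points, shrinks to opene $U_1\subset U$ and $V_1=V\setminus\bigcup_{i=1}^k T^{n_i}\overline{U_1}\subset V$ so that $N_T(U_1,V_1)$ misses the finitely many assumed times, contradicting $y\in\omega_{\mathcal{N}_T}(x)$, while you run the same shrinking trick directly (removing preimages $(T^i)^{-1}\overline{V^*}$ from $U$ rather than forward images from $V$) to force an element beyond any given $M$. The key idea --- kill a prescribed finite set of hitting times by passing to smaller opene subsets and then invoke the defining property of $\omega_{\mathcal{N}_T}(x)$ --- is identical.
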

\begin{proof}
Assume the contrary that there exist opene sets $U, V$ in $X$ and a point $z\in \omega_{\mathcal{N}_T}(x)$ with a neighborhood $G_z$ of $z$ such that $N_T (x, G_z) \cap N_T(U, V)$ is finite, say $N_T(x, G_z) \cap N_T(U, V)= \{n_1, \dots, n_k\}$. As the non-singleton space $X$ contains no isolated points, we can take an opene subset $U_1\subset U$ small enough such that
$V_1:= V\setminus \bigcup_{i= 1}^k T^{n_i} \overline{U_1}$ is an opene subset of $X$. By the construction we have $N_T(U_1, V_1)\subset N_T (U, V)$ and then $N_T(x, G_z) \cap N_T(U_1, V_1)= \varnothing$, a contradiction to the selection of the point $z\in \omega_{\mathcal{N}_T}(x)$. This finishes the proof.
\end{proof}

\begin{lem} \label{02000a}
The following conditions are
equivalent:
\begin{enumerate}
\item[(1)] $(X, T)$ is transitive compact.

\item[(2)] $(X, T)$ is topologically transitive and for any point $x\in X$ there exists
a point $z \in X$, such that $N_T(x,G) \cap N_T(W,T^{-k}W)\neq \varnothing$ for any neighborhood
$G$ of $z$, opene $W$ in $X$ and $k\in \mathbb{Z}_+$.
\end{enumerate}
\end{lem}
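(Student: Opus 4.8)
The plan is to treat the two implications separately; $(1)\Rightarrow(2)$ is immediate, while $(2)\Rightarrow(1)$ rests on a single reduction that rewrites an arbitrary set $N_T(U,V)$ in terms of the restricted sets $N_T(W,T^{-k}W)$ appearing in $(2)$.

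For $(1)\Rightarrow(2)$, transitive compactness obviously implies topological transitivity, and for the remaining clause I would fix $x$, take the point $z$ furnished by transitive compactness, and simply specialize $U=W$, $V=T^{-k}W$ in its defining property; since $T$ is continuous and surjective, $T^{-k}W$ is again opene, so the same $z$ witnesses $(2)$.

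The content is in $(2)\Rightarrow(1)$. Fix $x\in X$ and let $z$ be the point provided by $(2)$ (it depends only on $x$). I claim this single $z$ already witnesses transitive compactness, i.e.\ $N_T(x,G)\cap N_T(U,V)\neq\varnothing$ for all opene $U,V$ and every neighborhood $G$ of $z$. The key step is the following reduction: for any opene $U,V$ there exist an opene set $W$ and $k\in\mathbb{Z}_+$ with $N_T(W,T^{-k}W)\subset N_T(U,V)$. Granting this, property $(2)$ produces a point in $N_T(x,G)\cap N_T(W,T^{-k}W)$, which then lies in $N_T(x,G)\cap N_T(U,V)$; since $z$ in $(2)$ is chosen before $W$ and $k$, the one point $z$ serves every pair $U,V$, exactly as transitive compactness demands.

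To establish the reduction I would use transitivity to choose $k\in N_T(V,U)$ and set $W=V\cap T^{-k}U$, which is opene precisely because $k\in N_T(V,U)$. If $n\in N_T(W,T^{-k}W)$, pick $p\in W$ with $T^{n+k}p\in W$; then $q:=T^kp\in U$ (as $p\in T^{-k}U$), while $T^nq=T^{n+k}p\in W\subset V$, so $n\in N_T(U,V)$. The genuinely nontrivial point, and the step I expect to be the main obstacle, is exactly this choice of the auxiliary window $W$ and the shift $k$ so that the single return condition defining $N_T(W,T^{-k}W)$ simultaneously forces $q\in U$ and $T^nq\in V$; once $W$ is chosen correctly, the verification is routine.
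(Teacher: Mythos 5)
Your proposal is correct and follows essentially the same route as the paper: both implications are handled identically, with the key step in $(2)\Rightarrow(1)$ being the choice $k\in N_T(V,U)$ and $W=V\cap T^{-k}U$, so that returns of $W$ into $T^{-k}W$ force hits of $U$ into $V$. The only cosmetic difference is that you package this as the set inclusion $N_T(W,T^{-k}W)\subset N_T(U,V)$, while the paper verifies the same computation for the particular time $s$ it extracts.
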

\begin{proof}
(1) $\Rightarrow$ (2) is obvious. Just take $U=W$, $V=T^{-k}W.$

(2) $\Rightarrow$ (1) Fix a point $x \in X$. Let $z$ be a point of $X$ such that $ N_T(x,G) \cap N_T(W,T^{-k}W)\neq \varnothing$ for any neighborhood
$G$ of $z$, opene $W$ in $X$ and $k\in \mathbb{Z}_+$. We are
going to show $z \in \omega_{\mathcal{N}_T}(x)$. Let $U$ and $V$ be opene sets in $X$. Since $(X, T)$ is transitive,
there exists $m\in \mathbb{Z}_+$ such that $T^{m}V \cap U \neq \varnothing$. Now let $W=V \cap T^{-m}U\neq \varnothing$. Then
$ N_T(x,G) \cap N_T(W,T^{-m}W)\neq \varnothing,$
therefore there exists $s\in \mathbb{Z}_+$ such that $T^sx \in G$ and $T^sW \cap T^{-m}W \neq \varnothing$.
Hence $\varnothing \neq T^{s+m}W \cap W=T^{s+m}(V \cap T^{-m}U) \cap V \cap T^{-m}U \subset T^s U \cap V$ and we are done.
\end{proof}

Notice that for topological transitive systems the $\omega$-limit set of any point is either the whole space or a nowhere dense set
in the space. In the following we will obtain a similar result for the $\omega_{\mathcal{N}_T}$-limit sets in a transitive compact system $(X, T)$, which comes from the following general fact about the structure of the $\omega_{\mathcal{N}_T}$-limit sets.

\begin{lem} \label{0200}
$\omega_{\mathcal{N}_T}(x)$ is a positively $T$-invariant
closed subset of $X$ for any $x\in X$.
\end{lem}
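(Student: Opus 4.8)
The plan is to establish the two assertions—closedness and positive invariance—separately, reading off everything from the two descriptions of $\omega_{\mathcal{N}_T}(x)$ recorded in the introduction.

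For closedness I would use the ``intersection'' description: since $\mathcal{N}_T$ is a Furstenberg family, we have
$$\omega_{\mathcal{N}_T}(x)=\bigcap_{F\in\mathcal{N}_T}\overline{T^{F}x},$$
and each set $\overline{T^{F}x}$ is closed by definition. An arbitrary intersection of closed sets is closed, so $\omega_{\mathcal{N}_T}(x)$ is closed, and nothing more is needed here.

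For positive invariance I would switch to the ``hitting'' description: a point $z$ lies in $\omega_{\mathcal{N}_T}(x)$ precisely when $N_T(x,G)\cap N_T(U,V)\neq\varnothing$ for every neighborhood $G$ of $z$ and all opene $U,V\subset X$ (this is the statement $N_T(x,G)\in k\mathcal{N}_T$, unwound through the definition of the dual family). Fix $z\in\omega_{\mathcal{N}_T}(x)$; I must show $Tz$ satisfies the same condition. Since $k\mathcal{N}_T$ is hereditary upward and $N_T(x,\cdot)$ is monotone, it suffices to test open neighborhoods of $Tz$. The key manoeuvre is to transport a neighborhood of $Tz$ back to one of $z$. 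Given an open neighborhood $G'$ of $Tz$, continuity of $T$ makes $G:=T^{-1}G'$ an open neighborhood of $z$, and the identity $N_T(x,G)=\{n: T^{n+1}x\in G'\}$ gives $N_T(x,G)+1\subset N_T(x,G')$. Now, given opene $U,V$, I would invoke surjectivity of $T$: the set $T^{-1}V$ is opene, so by $z\in\omega_{\mathcal{N}_T}(x)$ we may pick $m\in N_T(x,G)\cap N_T(U,T^{-1}V)$. Then $m+1\in N_T(x,G')$ by the shift inclusion, while $U\cap T^{-m}(T^{-1}V)\neq\varnothing$ rewrites as $U\cap T^{-(m+1)}V\neq\varnothing$, i.e. $m+1\in N_T(U,V)$. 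Hence $m+1\in N_T(x,G')\cap N_T(U,V)$, which is therefore nonempty; as $G',U,V$ were arbitrary this yields $Tz\in\omega_{\mathcal{N}_T}(x)$.

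The only genuinely delicate point—and the step I would flag as the main obstacle—is the shift by one: relating the return times of $x$ to $G'$ and to $G=T^{-1}G'$, and correspondingly replacing the target $V$ by $T^{-1}V$. This substitution works only because $T$ is surjective, which is exactly what guarantees $T^{-1}V$ is again opene; without surjectivity the pulled-back target could be empty and the argument would collapse. Everything else is a routine unwinding of the definitions of $N_T$, $\mathcal{N}_T$, and the dual family.
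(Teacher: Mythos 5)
Your proof is correct and follows essentially the same route as the paper's: the positive-invariance step is the identical shift-by-one argument (pull a neighborhood $G'$ of $Tz$ back to $T^{-1}G'$, replace the target $V$ by the opene set $T^{-1}V$, and note that $n\in N_T(x,T^{-1}G')\cap N_T(U,T^{-1}V)$ forces $n+1\in N_T(x,G')\cap N_T(U,V)$). The only cosmetic difference is closedness, which you read off the intersection form $\bigcap_{F\in\mathcal{N}_T}\overline{T^{F}x}$ while the paper checks directly that every point of the closure satisfies the hitting condition --- both are immediate; and your explicit remark that surjectivity is what keeps $T^{-1}V$ opene (hence $N_T(U,T^{-1}V)\in\mathcal{N}_T$) is a point the paper uses silently.
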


\begin{proof}
Let $y \in \omega_{\mathcal{N}_T}(x)$ and $G_{Ty}$ be a neighborhood of $Ty$. For each $N_T(U, V) \in \mathcal{N}_T$, as $T^{-1}(G_{Ty})$ is a neighborhood of $y$ and $N_T(U, T^{- 1} V) \in \mathcal{N}_T$, we may take $n\in N_T (x, T^{-1} G_{Ty})\cap N_T (U, T^{-1} V)$ by the assumption of $y \in \omega_{\mathcal{N}_T}(x)$, and then $n+1\in N_T(x, G_{Ty}) \cap N_T(U, V)$. By the arbitrariness of $N_T(U, V)$ and $G_{Ty}$ we have $T y\in \omega_{\mathcal{N}_T}(x)$. Now let $z \in \overline{\omega_{\mathcal{N}_T}(x)}$. In any open neighborhood $G_z$ of the point $z$ we can find a point $y \in \omega_{\mathcal{N}_T}(x)$, and then $N_T (x, G_z) \cap N_T(U, V)\ne \varnothing$ as $G_z$ is also a neighborhood of $y$. This shows $z \in \omega_{\mathcal{N}_T}(x)$.
Summing up, $\omega_{\mathcal{N}_T}(x)$ is a positively $T$-invariant
closed subset of $X$.
\end{proof}

Applying Lemma \ref{0200} we have directly:

\begin{cor} \label{201509031402}
Let $(X,T)$ be a transitive compact system and $x \in X$. Then:
\begin{enumerate}

\item[(1)] either $\omega_{\mathcal{N}_T} (x)=X$ or $\omega_{\mathcal{N}_T}(x)$ is nowhere dense in $X$.

\item[(2)] if $x$ is a minimal point, say $M$ to be the minimal subset containing $x$, then $\omega_{\mathcal{N}_T} (x)=\omega_T (x)= M$.
\end{enumerate}\end{cor}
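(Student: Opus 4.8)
The plan is to derive both parts directly from Lemma~\ref{0200}, which tells us that $A := \omega_{\mathcal{N}_T}(x)$ is a closed, positively $T$-invariant subset of $X$, together with the fact that a transitive compact system is in particular topologically transitive.

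For part (1), the key observation I would isolate is the following general principle: in any topologically transitive system, a closed, positively $T$-invariant set $A$ with nonempty interior must equal $X$. To see this, set $U=\inter(A)$, which is opene, and pick a transitive point $z\in\Tran(X,T)$ (a dense $G_\delta$ set, so one exists). Since $\orb_T(z)$ is dense, there is $n\in\mathbb{Z}_+$ with $T^nz\in U\subset A$. Because $X$ has no isolated points and $\orb_T(T^nz)$ differs from the dense set $\orb_T(z)$ by at most finitely many points, $\orb_T(T^nz)$ is still dense, so $T^nz$ is again a transitive point. Positive invariance gives $\orb_T(T^nz)\subset A$, and as $A$ is closed we get $X=\overline{\orb_T(T^nz)}\subset A$, i.e. $A=X$. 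Contrapositively, if $A\neq X$ then $A$ has empty interior; being closed, it is then nowhere dense. Applying this to $A=\omega_{\mathcal{N}_T}(x)$ yields the stated dichotomy.

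For part (2), I would first record that $\omega_{\mathcal{N}_T}(x)\subset\omega_T(x)$ always, as noted in the preliminaries, and that for a minimal point $x$ lying in the minimal set $M$ one has $\omega_T(x)=M$: indeed $\orb_T(x)\subset M$ forces $\omega_T(x)\subset M$, while $\omega_T(x)$ is a nonempty, closed, $T$-invariant subset of the minimal set $M$, hence equals $M$. Thus $\omega_{\mathcal{N}_T}(x)\subset M$. Transitive compactness guarantees $\omega_{\mathcal{N}_T}(x)\neq\varnothing$, and by Lemma~\ref{0200} it is a closed, positively $T$-invariant subset of $M$. Since $(M,T|_M)$ is minimal and a minimal system admits no proper, closed, nonempty, positively $T$-invariant subset, we conclude $\omega_{\mathcal{N}_T}(x)=M$, and therefore $\omega_{\mathcal{N}_T}(x)=\omega_T(x)=M$.

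The routine verifications (closedness and invariance of $\omega_T(x)$, existence of a transitive point) are all standard, and there is no genuine obstacle here once Lemma~\ref{0200} is in hand. The one step I would state with care is the claim in part (1) that forward iterates of a transitive point remain transitive, since this is exactly where the standing assumption that $X$ has no isolated points is used; without it the dichotomy could fail.
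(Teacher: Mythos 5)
Your proposal is correct and follows essentially the same route as the paper: part (1) combines Lemma~\ref{0200} with the fact that in a transitive system any closed, positively $T$-invariant set is either all of $X$ or nowhere dense, and part (2) combines Lemma~\ref{0200}, the chain $\varnothing\neq\omega_{\mathcal{N}_T}(x)\subset\omega_T(x)\subset M$, and the minimality of $M$. The only difference is that the paper simply cites \cite{KS1997} for the dichotomy used in (1), whereas you prove it inline (correctly, including the point that forward iterates of a transitive point stay transitive because $X$ has no isolated points).
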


\begin{proof}
(1) The conclusion follows from the facts that each transitive compact system is transitive, and that any positively $T$-invariant closed subset in a transitive system is either the whole space or a nowhere dense subset (see \cite{KS1997}).

(2) The conclusion follows from Lemma \ref{0200} and the fact of $\varnothing\ne \omega_{\mathcal{N}_T} (x)\subset \omega_T (x)\subset M$ and the minimality of the subset $M$.
\end{proof}

It is not hard to show that the $\omega$-limit set of a system $(X, T)$ is not only positively $T$-invariant but also $T$-invariant.
From Lemma \ref{0200} it seems reasonable to expect that each $\omega_{\mathcal{N}_T}$-limit set is more than positively $T$-invariant, that is, is also $T$-invariant. The answer to the question stands open for the general case, while we can prove:

\begin{prop} \label{201509042231}
Let $(X,T)$ be a weakly mixing system and $x \in X$. Then the subset $\omega_{\mathcal{N}_T}(x)$ is $T$-invariant.
\end{prop}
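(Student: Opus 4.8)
The plan is to combine the positive invariance already established in Lemma \ref{0200} with a preimage-producing argument that uses weak mixing to pass from single return sets to finite intersections of them. Since Lemma \ref{0200} gives $T\omega_{\mathcal{N}_T}(x)\subseteq \omega_{\mathcal{N}_T}(x)$, it remains only to prove the reverse inclusion $\omega_{\mathcal{N}_T}(x)\subseteq T\omega_{\mathcal{N}_T}(x)$, i.e. that every $y\in\omega_{\mathcal{N}_T}(x)$ admits a preimage $w$ with $Tw=y$ and $w\in\omega_{\mathcal{N}_T}(x)$. Writing $\omega_{\mathcal{N}_T}(x)=\bigcap_{U,V}\overline{T^{N_T(U,V)}x}$ (intersection over opene $U,V$) and using compactness of $T^{-1}(y)$, by the finite intersection property it suffices to show, for each finite family of opene pairs $(U_1,V_1),\dots,(U_m,V_m)$, that $T^{-1}(y)\cap\bigcap_{i=1}^m\overline{T^{N_T(U_i,V_i)}x}\neq\varnothing$. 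Here the key observation is that a common preimage can be manufactured by a single backward limit: if one finds $n_j\to\infty$ with $n_j\in\bigcap_i N_T(T^{-1}U_i,V_i)$ and $T^{n_j}x\to y$, then (passing to a subsequence) $w:=\lim_j T^{n_j-1}x$ satisfies $Tw=y$ by continuity, while $n_j-1\in N_T(U_i,V_i)$ for every $i$ forces $w\in\overline{T^{N_T(U_i,V_i)}x}$ for all $i$ simultaneously.

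Thus the whole statement reduces to the following \emph{simultaneous return} claim: for weakly mixing $(X,T)$, $y\in\omega_{\mathcal{N}_T}(x)$, any opene sets $A_1,\dots,A_m,B_1,\dots,B_m$ and any neighborhood $H$ of $y$, one has $N_T(x,H)\cap\bigcap_{i=1}^m N_T(A_i,B_i)\neq\varnothing$. The point is that $\bigcap_i N_T(A_i,B_i)=N_{T^{(m)}}(A_1\times\cdots\times A_m,\,B_1\times\cdots\times B_m)$, a return-time set in the product system. Since $(X,T)$ is weakly mixing, so is $(X^{m+1},T^{(m+1)})$, and hence this product system is transitive compact; my plan is to run Lemma \ref{02000} inside $X^{m+1}$. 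Concretely, I would view the orbit of $x$ as the first coordinate of a product point $(x,\mathbf{s})$, use transitive compactness of $X^{m+1}$ to produce a point of $\omega_{\mathcal{N}_{T^{(m+1)}}}(x,\mathbf{s})$, and test it with the opene product sets $X\times\prod_i A_i$ and $X\times\prod_i B_i$ together with the neighborhood $H\times X^m$; Lemma \ref{02000} applied in the product then yields exactly an $n$ with $T^nx\in H$ and $T^nA_i\cap B_i\neq\varnothing$ for all $i$, which is the claim. The case $\omega_{\mathcal{N}_T}(x)=X$ is immediate (then $T\omega_{\mathcal{N}_T}(x)=TX=X$ since $T$ is surjective), so one may assume $\omega_{\mathcal{N}_T}(x)$ is nowhere dense by Corollary \ref{201509031402}.

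The hard part is exactly this synchronization: all that $y\in\omega_{\mathcal{N}_T}(x)$ directly provides (via Lemma \ref{02000}) is that $N_T(x,H)$ meets every \emph{single} set $N_T(U,V)$, whereas the claim requires it to meet every \emph{finite intersection}. In general $\mathcal{N}_T$ is not a filter, and a finite intersection $\bigcap_i N_T(A_i,B_i)$ need not contain any single $N_T(U,V)$, so the step cannot be reduced to the one-pair case by set inclusion; this is precisely why the hypothesis of weak mixing (rather than mere transitive compactness) is indispensable, and why the general case is left open. Note also that thickness of $\bigcap_i N_T(A_i,B_i)$ alone does not settle the matter, since $N_T(x,H)$ need not be syndetic. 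The delicate issue to be handled carefully is therefore to guarantee that the first coordinate of the product point one extracts can be forced to equal $y$ (equivalently, that the approach of the orbit of $x$ to $y$ can be aligned with the product-return times); this lifting of the approach into the product is the true obstacle, and the argument must exploit weak mixing—through the fact that all powers are again weakly mixing and transitive compact—to overcome it.
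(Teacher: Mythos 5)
Your first paragraph is a sound reduction, and it in fact parallels the paper's own strategy: positive invariance comes from Lemma \ref{0200}, and the preimage is manufactured as a limit $w=\lim_j T^{n_j-1}x$ along times $n_j\in\bigcap_i N_T(T^{-1}U_i,V_i)$ with $T^{n_j}x\to y$ (the paper runs exactly this limit argument, organized along a nested sequence of finite open covers instead of your finite-intersection-property bookkeeping). The genuine gap is that the ``simultaneous return'' claim, to which you correctly reduce everything, is never proved. The missing ingredient is Furstenberg's theorem, which the paper invokes at this very step and recalls again in its final section: $(X,T)$ is weakly mixing if and only if $\mathcal{N}_T$ is a filter. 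So under the hypothesis of the proposition, given opene $A_1,B_1,\dots,A_m,B_m$ there \emph{do} exist opene $U,V$ with $N_T(U,V)\subset\bigcap_{i=1}^m N_T(A_i,B_i)$; your assertion that ``a finite intersection $\bigcap_i N_T(A_i,B_i)$ need not contain any single $N_T(U,V)$'' is true for general systems but is exactly what weak mixing rules out. Once this is invoked, your claim is immediate: since $y\in\omega_{\mathcal{N}_T}(x)$, Lemma \ref{02000} makes $N_T(x,H)\cap N_T(U,V)$ nonempty (indeed infinite), and this set lies inside $N_T(x,H)\cap\bigcap_{i=1}^m N_T(A_i,B_i)$. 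The reduction ``by set inclusion'' that you dismissed as impossible is precisely how the paper argues, via the inclusion \eqref{201509051737}.

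The product-system substitute you offer does not close the gap, as you concede yourself. Transitive compactness of $(X^{m+1},T^{(m+1)})$ produces \emph{some} point of $\omega_{\mathcal{N}_{T^{(m+1)}}}(x,\mathbf{s})$, but gives no control on its first coordinate; unless that coordinate happens to lie in $H$, the set $H\times X^m$ is not a neighborhood of the extracted point, so Lemma \ref{02000} cannot be tested against it. Your final sentence names this synchronization as ``the true obstacle'' that weak mixing ``must'' overcome, but no argument is given, and producing one would essentially amount to re-proving the filter property. As it stands, the proposal is a correct reduction plus an unproved key claim; plugging in Furstenberg's characterization from \cite{Furstenberg1967} completes it and yields a proof essentially the same as the paper's.
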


\begin{proof}
It suffices to consider the case of $\omega_{\mathcal{N}_T}(x)\ne \varnothing$ (in fact, there exists only this case for a weakly mixing system as explained later). Let $y\in \omega_{\mathcal{N}_T}(x)$.
By Lemma \ref{0200}, we only need to find some $z\in \omega_{\mathcal{N}_T}(x)$ with $T z= y$.

For each $k\in \mathbb{N}$ we take a finite open cover $\mathcal{U}_k$ of $X$ with $\text{diam} (\mathcal{U}_k)< \frac{1}{k}$ and a neighborhood $W_k$ of $y$ with $\text{diam} (W_k)< \frac{1}{k}$. As $(X, T)$ is weakly mixing, by \cite{Furstenberg1967} there exist opene subsets $U_1, V_1$ of $X$ such that
$$N_T (U_1, V_1)\subset \bigcap_{U, V\in \mathcal{U}_1} N_T (U, V).$$
As $y\in \omega_{\mathcal{N}_T}(x)$, let $n_1\in N_T (x, W_1)\cap N_T (T^{- 1} U_1, V_1)$. We may assume $n_1\in \mathbb{N}$ by Lemma \ref{02000}.
Now assume that positive integers $n_1< \dots< n_m$ and opene subsets $U_1, V_1, \dots, U_m, V_m$ of $X$ have been constructed, where $m\in \mathbb{N}$, applying \cite{Furstenberg1967} again to weakly mixing $(X, T)$ we can choose opene subsets $U_{m+ 1}, V_{m+ 1}$ of $X$ such that
\begin{equation} \label{201509051737}
N_T (U_{m+ 1}, V_{m+ 1})\subset \bigcap_{U, V\in \mathcal{U}_{m+ 1}} N_T (U, V)\cap \bigcap_{i= 1}^m N_T (U_i, V_i).
\end{equation}
And then applying Lemma \ref{02000} to the assumption of $y\in \omega_{\mathcal{N}_T}(x)$, we can choose $n_{m+ 1}\in N_T (x, W_{m+ 1})\cap N_T (T^{- 1} U_{m+ 1}, V_{m+ 1})$ with $n_m< n_{m+ 1}$.

By choosing a subsequence if necessary we may assume that the sequence $T^{n_i- 1} x$ tends to $z\in X$. From the above construction it is easy to check $T z= y$. Now we will finish the proof by showing $z\in \omega_{\mathcal{N}_T}(x)$. Let $G_z$ be a neighborhood of $z$ and $U', V'$ be opene subsets of $X$. By the above construction, $T^{n_i- 1} x$ belongs to $G_z$ if $i$ is large enough, and we may take opene subsets $U'_t, V'_t\in \mathcal{U}_t$ for some $t\in \mathbb{N}$ such that $U'_t\subset U'$ and $V'_t\subset V'$. We may assume that $t$ is large enough and hence $n_t- 1\in N_T (x, G_z)$. Additionally, $n_t\in N_T (T^{- 1} U_t, V_t)$ and then by \eqref{201509051737} one has
$$n_t- 1\in N_T (U_t, V_t)\subset N_T (U'_t, V'_t)\subset N_T (U', V').$$
In particular, $N_T (x, G_z)\cap N_T (U', V')\neq \varnothing$. This shows $z\in \omega_{\mathcal{N}_T}(x)$ by the arbitrariness of $G_z$ and $U', V'$, which finishes the proof.
\end{proof}


It was first observed in \cite{AK} that each weakly mixing system $(X, T)$ is transitive compact by applying \cite{Furstenberg1967} to $(X, T)$ (and hence there exists a transitive compact, non-minimal, M-system, as there are many weakly mixing, non-minimal, M-systems). The following result extends it a little bit.

\begin{lem} \label{0201}
Let $(X, T)$ be a weakly mixing system, $x\in X$ and $W$ be a closed subset of $X$ such that $N_T(x, W)$ is syndetic. Then
 $\omega_{\mathcal{N}_T}(x) \cap W \neq \varnothing.$
\end{lem}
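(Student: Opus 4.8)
The plan is to exhibit $\omega_{\mathcal{N}_T}(x)$ as an intersection of closed sets and then run a finite intersection property argument in the compact space $X$. Since $\mathcal{N}_T$ is hereditary upward and generated by the sets $N_T(U,V)$, and since $F_1\subset F_2$ forces $\overline{T^{F_1}x}\subset\overline{T^{F_2}x}$, I would first record that
$$\omega_{\mathcal{N}_T}(x)=\bigcap_{U,V}\overline{T^{N_T(U,V)}x},$$
where the intersection runs over all opene $U,V\subset X$. Adjoining the closed set $W$, this is a family of closed subsets of the compact space $X$, so to prove $\omega_{\mathcal{N}_T}(x)\cap W\neq\varnothing$ it suffices to show that every finite subfamily meets $W$, that is, $W\cap\bigcap_{i=1}^k\overline{T^{N_T(U_i,V_i)}x}\neq\varnothing$ for all opene $U_1,V_1,\dots,U_k,V_k$.

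Next I would reduce each such finite intersection to a single orbit closure. Setting $F:=\bigcap_{i=1}^k N_T(U_i,V_i)$, the inclusion $T^{F}x\subset\bigcap_{i=1}^k T^{N_T(U_i,V_i)}x$ gives $\overline{T^{F}x}\subset\bigcap_{i=1}^k\overline{T^{N_T(U_i,V_i)}x}$, so it is enough to find a single integer $n\in F$ with $T^nx\in W$: such an $n$ yields $T^nx\in W\cap\overline{T^{F}x}$, settling the finite stage.

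The decisive step, and the one I expect to be the main obstacle, is that $F$ is thick. Writing the intersection in the $k$-fold product, $F=N_{T^{(k)}}(U_1\times\dots\times U_k,\,V_1\times\dots\times V_k)$, and since $(X,T)$ weakly mixing forces $(X^k,T^{(k)})$ to be weakly mixing, the right-hand side is thick by the characterization of weak mixing recalled in Section 2 (equivalently, one may invoke \cite{Furstenberg1967} exactly as in the proof of Proposition \ref{201509042231} to produce opene $U',V'$ with $N_T(U',V')\subset F$). The point is that a single $N_T(U_i,V_i)$ being thick does not suffice, since thickness is not preserved under finite intersections in general; it is precisely weak mixing of all powers that keeps $F$ thick. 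Finally, the hypothesis that $N_T(x,W)$ is syndetic means, by the definition of syndeticity as duality with the thick family, that $N_T(x,W)$ meets every thick set, so $N_T(x,W)\cap F\neq\varnothing$; any $n$ in this intersection satisfies $T^nx\in W$ and $n\in F$, which is exactly the integer required above. The finite intersection property then delivers $\omega_{\mathcal{N}_T}(x)\cap W\neq\varnothing$.
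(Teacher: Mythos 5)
Your proof is correct, and its dynamical core is the same as the paper's: both arguments rest on the two facts that weak mixing (via Furstenberg) makes any finite intersection $F=\bigcap_{i=1}^k N_T(U_i,V_i)$ thick, and that the syndetic set $N_T(x,W)$ must therefore meet $F$, producing an $n$ with $T^nx\in W$ and $n$ in every $N_T(U_i,V_i)$. Where you genuinely differ is the topological packaging. The paper argues by contradiction: if $\omega_{\mathcal{N}_T}(x)\cap W=\varnothing$, then each $y\in W$ has a neighborhood $G_y$ and opene $U_y,V_y$ with $N_T(x,G_y)\cap N_T(U_y,V_y)=\varnothing$; compactness of $W$ gives a finite subcover $G_{y_1},\dots,G_{y_k}$, and the integer $n$ produced by the thick--syndetic intersection lands in some $G_{y_j}$, contradicting the choice of that witness. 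You instead give a direct argument, applying the finite intersection property in $X$ to the representation $\omega_{\mathcal{N}_T}(x)=\bigcap_{U,V}\overline{T^{N_T(U,V)}x}$ (your justification of this identity via upward heredity of $\mathcal{N}_T$ is right), so compactness is used on nested closed sets rather than on an open cover of $W$, and the opene pairs are arbitrary rather than witnesses extracted from a negation. The two are dual deployments of compactness; yours avoids contradiction and isolates exactly what is needed of $\mathcal{N}_T$ --- that it is generated by sets whose finite intersections are thick --- so it transfers verbatim to $\omega_{\mathcal{F}}$-limit sets for any family $\mathcal{F}$ admitting such a base, while the paper's version stays closer to the pointwise characterization of $\omega_{\mathcal{N}_T}(x)$ and never needs the orbit-closure formula. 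Your product-system justification that $F$ is thick (weak mixing of $(X^k,T^{(k)})$ applied to $U_1\times\dots\times U_k$ and $V_1\times\dots\times V_k$) is also valid and is equivalent to the paper's direct appeal to Furstenberg's intersection property.
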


\begin{proof}
Assume the contrary that $\omega_{\mathcal{N}_T}(x) \cap W=\varnothing$.
Then for every $y \in W$ there exists a neighborhood $G_y$ of the point $y$ and opene subsets
$U_y, V_y$ of $X$ such that
$$N_T (x, G_y) \cap N_T(U_y, V_y)=\varnothing.$$
By the compactness of $W$,
we may choose $y_1, \dots, y_k$ such that $\{G_{y_1}, \dots G_{y_k}\}$ covers $W$.
As the system $(X, T)$ is weak mixing, $\bigcap_{i=1}^{k} N_T(U_{y_i}, V_{y_i})$ is a thick set by \cite{Furstenberg1967}, and then,
by the assumption that $N_T(x, W)$ is syndetic, there exists
$$n \in \bigcap_{i=1}^{k} N_T(U_{y_i}, V_{y_i}) \cap N_T(x, W).$$
In particular, $T^{n}x \in W$ and then $T^{n}x \in G_{y_j}$ for
some $1 \leq j \leq k$. This shows $n\in N_T (x, G_{y_j})\cap N_T(U_{y_j}, V_{y_j})$,
a contradiction to the selection of $G_{y_j}, U_{y_j}, V_{y_j}$.
\end{proof}


In the following we shall prove that the difference between a transitive compact system and a weakly mixing system is not too much.
For example, analogously as it was done for weakly mixing systems in \cite[Theorem 3.8]{AK} one can prove:

\begin{prop} \label{0207}
Let $(X,T)$ be a transitive compact system. Then for every $x\in X$ the proximal cell $\Prox _T(x)$ is a dense $G_\delta$ subset of $X$.
\end{prop}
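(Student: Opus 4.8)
The plan is to establish the two halves of the statement separately: that $\Prox_T(x)$ is a $G_\delta$ set (a routine topological fact, true in any system) and that it is dense (this is where transitive compactness enters). For the $G_\delta$ part I would simply write
$$\Prox_T(x) = \bigcap_{k \in \N} \bigcap_{N \in \N} \bigcup_{n \ge N} \{y \in X : d(T^n x, T^n y) < 1/k\}.$$
Each set $\{y : d(T^n x, T^n y) < 1/k\}$ is open by continuity of $T^n$, so each inner union is open and the whole right-hand side is a countable intersection of open sets. That it equals $\{y : \liminf_n d(T^n x, T^n y) = 0\} = \Prox_T(x)$ is immediate from the definition of $\liminf$, so $\Prox_T(x)$ is $G_\delta$.

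The substance is density. Fix $x\in X$ and an arbitrary opene $W\subset X$; the goal is to produce $y\in W\cap\Prox_T(x)$. By transitive compactness I fix once and for all a point $z\in\omega_{\mathcal{N}_T}(x)$, so that $N_T(x,G)\cap N_T(U,V)\neq\varnothing$ for every neighborhood $G$ of $z$ and all opene $U,V\subset X$; moreover, by Lemma \ref{02000} every such intersection is \emph{infinite}. I then build inductively a decreasing chain of opene sets $W=W_0\supset\overline{W_1}\supset\overline{W_2}\supset\cdots$ and times $n_1<n_2<\cdots$ as follows. Given $W_{k-1}$, choose an open neighborhood $O$ of $z$ with $\diam O<1/k$, and apply the intersection property with $G=O$, $U=W_{k-1}$, $V=O$; since $N_T(x,O)\cap N_T(W_{k-1},O)$ is infinite, I pick $n_k>n_{k-1}$ in it. Then $T^{n_k}x\in O$ while $W_{k-1}\cap T^{-n_k}O$ is opene, so (as $X$ has no isolated points) I can choose an opene $W_k$ with $\overline{W_k}\subset W_{k-1}\cap T^{-n_k}O$. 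For every $y\in\overline{W_k}$ one has $T^{n_k}y\in\overline{O}$, hence $d(T^{n_k}x,T^{n_k}y)\le\diam O<1/k$. Finally, any $y\in\bigcap_k\overline{W_k}$ (nonempty by nestedness and compactness) lies in $W$ and satisfies $d(T^{n_k}x,T^{n_k}y)<1/k$ with $n_k\to\infty$, so $\liminf_n d(T^n x,T^n y)=0$, i.e. $y\in W\cap\Prox_T(x)$. This proves density, and together with the $G_\delta$ property gives the claim.

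The main obstacle is ensuring $\liminf_n d(T^n x,T^n y)=0$ rather than merely $\inf_n=0$: the proximity must recur at arbitrarily large times, which forces the $n_k$ to diverge. This is precisely why Lemma \ref{02000} (infiniteness of the relevant intersections) is indispensable — it lets me insist $n_k>n_{k-1}$ at each stage. Everything else is bookkeeping: each diameter-$1/k$ neighborhood $O$ of the \emph{single} fixed point $z\in\omega_{\mathcal{N}_T}(x)$ simultaneously drives $x$ and an entire opene subset of $W_{k-1}$ into $O$ at time $n_k$, and the standard nested-closed-set construction extracts one point $y$ that is proximal to $x$. This mirrors the weakly mixing argument of \cite[Theorem 3.8]{AK}, with the thickness of $N_T(U,V)$ there replaced here by the defining property of $z$ together with Lemma \ref{02000}.
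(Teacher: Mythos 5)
Your proof is correct and is exactly the argument the paper intends: the paper states Proposition \ref{0207} without proof, referring to the analogous weakly mixing argument of \cite[Theorem 3.8]{AK}, and your nested-opene-sets construction driven by a fixed $z\in\omega_{\mathcal{N}_T}(x)$ together with the infiniteness from Lemma \ref{02000} is precisely that adaptation (and matches the technique the paper itself uses in Theorem \ref{0206a}). The only cosmetic remark is that shrinking $W_k$ so that $\overline{W_k}\subset W_{k-1}\cap T^{-n_k}O$ needs only regularity of the metric space, not the absence of isolated points.
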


The following result provides sufficient and necessary conditions for a system being weakly mixing.
Recall that each weakly mixing system is transitive compact.

\begin{prop} \label{0203}
The following conditions are
equivalent:
\begin{enumerate}
\item[(1)] $(X, T)$ is weakly mixing.

\item[(2)] $x\in \omega_{\mathcal{N}_T} (x)$ for each $x\in X'$, where $X'$ is a dense subset of $X$.

\item[(3)] $\omega_{\mathcal{N}_T}(x_0)=X$ for some point $x_0\in X$.

\item[(4)] $\omega_{\mathcal{N}_T}(x)=X$ for each $x\in X''$, where $X''$ is a dense $G_\delta$ subset of
$X$.
\end{enumerate}
\end{prop}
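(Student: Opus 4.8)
The plan is to establish the four equivalences cyclically, $(1)\Rightarrow(4)\Rightarrow(3)\Rightarrow(2)\Rightarrow(1)$, so that weak mixing anchors both ends. Two links are immediate: $(4)\Rightarrow(3)$ holds because a dense $G_\delta$ subset of the (Baire) space $X$ is nonempty, so any $x_0\in X''$ witnesses $(3)$; and once $(3)\Rightarrow(2)$ and $(2)\Rightarrow(1)$ are in hand the loop closes. The genuine content sits in $(1)\Rightarrow(4)$ and $(2)\Rightarrow(1)$.

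For $(1)\Rightarrow(4)$ I would avoid handling $\omega_{\mathcal{N}_T}(x)$ pointwise and run a Baire category argument instead. Fix a countable base $\{G_i\}$ of $X$ and, for opene $G,U,V$, set $D_{G,U,V}=\{x\in X: T^nx\in G \text{ for some } n\in N_T(U,V)\}=\bigcup_{n\in N_T(U,V)}T^{-n}G$, which is open. The key observation is that $D_{G,U,V}$ is \emph{dense} precisely when $N_T(U,V)\cap N_T(O,G)\neq\varnothing$ for every opene $O$, that is, precisely when $(X\times X,T\times T)$ is transitive; thus weak mixing makes every $D_{G_i,U_j,V_k}$ (indices ranging over the base) open and dense. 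Unwinding $z\in\omega_{\mathcal{N}_T}(x)$ through the base shows $\{x:\omega_{\mathcal{N}_T}(x)=X\}=\bigcap_{i,j,k}D_{G_i,U_j,V_k}$, so Baire's theorem yields the required dense $G_\delta$ set $X''$.

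For $(3)\Rightarrow(2)$ I would note that $\omega_{\mathcal{N}_T}(x_0)=X$ forces $x_0$ to be transitive and, via the identity $N_T(Tx_0,G)=N_T(x_0,T^{-1}G)$ (with $T^{-1}G$ opene since $T$ is surjective), propagates along the orbit, giving $\omega_{\mathcal{N}_T}(T^kx_0)=X$ for every $k$; hence the dense set $X'=\orb_T(x_0)$ satisfies $x\in\omega_{\mathcal{N}_T}(x)$, which is $(2)$. The crux is $(2)\Rightarrow(1)$, where I would check transitivity of $X\times X$ directly. Given opene $A_1,B_1,A_2,B_2$, transitivity produces an opene $P\subseteq A_1$ and an $s\ge 0$ with $T^sP\subseteq B_1$; picking $x\in X'\cap P$ gives $x\in A_1$ and $T^sx\in B_1$. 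Applying the defining property of $x\in\omega_{\mathcal{N}_T}(x)$ to the neighbourhood $P$ and to the opene pair $(A_2,T^{-s}B_2)$ yields $n$ with $T^nx\in P$ and $n\in N_T(A_2,T^{-s}B_2)$. Then $m:=n+s$ serves both coordinates: $m\in N_T(A_2,B_2)$ by the choice of $T^{-s}B_2$, while $x\in A_1$ and $T^mx=T^s(T^nx)\in T^sP\subseteq B_1$ give $m\in N_T(A_1,B_1)$.

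The main obstacle is exactly this alignment of times. Hypothesis $(2)$ only controls returns of $x$ to its \emph{own} neighbourhoods, while weak mixing demands one instant $m$ carrying $A_1$ into $B_1$ and $A_2$ into $B_2$ simultaneously; a naive use of transitivity to bridge $A_1$ to $B_1$ introduces a fixed offset $s$ that refuses to match the variable return time. The resolution is to absorb that offset on the other side by replacing the target $B_2$ with the opene set $T^{-s}B_2$, and then to use the \emph{full} strength of $x\in\omega_{\mathcal{N}_T}(x)$ — namely that $N_T(x,P)$ meets every member of $\mathcal{N}_T$, not merely that it is infinite. I expect the remaining checks (openness of the preimages, nonemptiness of $T^{-s}B_2$ from surjectivity, and the reduction of $\omega_{\mathcal{N}_T}(x)=X$ to countably many base conditions) to be routine.
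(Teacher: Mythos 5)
Your proposal is correct, but it takes a genuinely different route from the paper, which uses condition (1) as a hub rather than a cycle: there, (2)$\Rightarrow$(1) is two lines ($N_T(U,U)\cap N_T(U,V)\neq\varnothing$ plus Petersen's lemma), (3)$\Rightarrow$(1) is a separate direct argument, (1)$\Rightarrow$(4) is proved by taking for $X''$ the first coordinates of points of $\Tran(X^2,T^{(2)})$, and (4)$\Rightarrow$(2), (4)$\Rightarrow$(3) are declared immediate. Your cycle (1)$\Rightarrow$(4)$\Rightarrow$(3)$\Rightarrow$(2)$\Rightarrow$(1) differs in all three substantive links. Your (1)$\Rightarrow$(4) via Baire category is actually tighter than the paper's: writing $\{x\in X:\omega_{\mathcal{N}_T}(x)=X\}=\bigcap_{i,j,k}D_{G_i,U_j,V_k}$ exhibits the $G_\delta$ structure explicitly, whereas the paper's set of first coordinates of product-transitive points is dense but not obviously $G_\delta$, so your identity is exactly what is needed to finish that step cleanly. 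Your (3)$\Rightarrow$(2), pushing $\omega_{\mathcal{N}_T}(x_0)=X$ along the orbit through $N_T(Tx_0,G)=N_T(x_0,T^{-1}G)$, has no counterpart in the paper, and your (2)$\Rightarrow$(1) replaces the citation of Petersen's lemma with a self-contained verification of transitivity of $X\times X$; amusingly, your offset-absorption device (replacing $B_2$ by $T^{-s}B_2$) is the same trick the paper deploys in its (3)$\Rightarrow$(1). One step you should make explicit: in (2)$\Rightarrow$(1) you invoke transitivity of $(X,T)$ to produce $P$ and $s$, and this must itself be extracted from hypothesis (2); it is immediate --- for any $x\in X'$, taking $G=X$ in the definition of $x\in\omega_{\mathcal{N}_T}(x)$ gives $N_T(U,V)\neq\varnothing$ for all opene $U,V$ --- but as written it enters your argument as an unjustified assumption.
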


\begin{proof}
(2) $\Rightarrow$ (1) Let $U, V$ be opene subsets of $X$ and we may take a point $x\in U\cap X'$ by the density of $X'$. As $x\in \omega_{\mathcal{N}_T} (x)$, $N_T (x, U)\cap N_T (U, V)\neq \varnothing$ and then $N_T (U, U)\cap N_T (U, V)\neq \varnothing$. By the arbitrariness of $U, V$ we have that $(X, T)$ is weakly mixing by applying \cite[Lemma]{Petersen}.

(3) $\Rightarrow$ (1)
Let $U_1, U_2, V_1, V_2$ be opene subsets in $X$ and take $y_1\in U_1, y_2\in V_1$. Observing
$\omega_{\mathcal{N}_T}(x_0)= X$, in particular, $y_1\in \omega_{\mathcal{N}_T}(x_0)$, there exists $k\in \mathbb{Z}_+$
such that $T^k x_0 \in U_1$, and, in addition, by $y_2\in \omega_{\mathcal{N}_T}(x_0)$ we may choose
 $n \in N_T (x_0, V_1) \cap N_T (T^{-k} U_2, V_2)$ with $n> k$ by Lemma \ref{02000}. In particular,
 $n-k \in N_T(U_1, V_1) \cap N_T(U_2, V_2)$. Then the system $(X,T)$ is
 weakly mixing by the arbitrariness of $U_1, U_2, V_1, V_2$.

(4) $\Rightarrow$ (3) and (4) $\Rightarrow$ (2) are direct, it remains to prove (1) $\Rightarrow$ (4).
Now let $(X, T)$ be a weakly mixing system, and then $\Tran(X^2, T^{(2)})$ is a dense $G_\delta$ subset of $X^2$. It suffices to show $\omega_{\mathcal{N}_T}(x)=X$ for each point $(x, y) \in \Tran(X^2, T^{(2)})$.
Let $G$, $U$, $V$ be opene sets in $X$. Obviously, $y \in \Tran(X, T)$, and hence there is $m \in \mathbb{Z}_+$ such
that $T^m y \in U$. Since $(x, y)$ is transitive, there
exists $n \in N_T(x, G) \cap N_T(y, T^{-m}V) \neq \varnothing$. Then $T^nx \in G$ and $T^n U \cap V \neq \varnothing$,
because $T^{n+m}y \in V$ and $T^my \in U$. Hence for any opene sets $G$, $U$, $V$, $N_T(x, G) \cap N_T(U, V) \neq \varnothing$.
That is, $\omega_{\mathcal{N}_T}(x)=X$.
\end{proof}

Applying Corollary \ref{201509031402} (2) to Proposition \ref{0203}, we have following direct corollaries:

\begin{cor} \label{20160110}
A dynamical system $(X, T)$ is minimal and weakly mixing if and only if $\omega_{\mathcal{N}_T}(x)=X$ for each $x\in X$.
\end{cor}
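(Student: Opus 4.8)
The plan is to read off both implications directly from Proposition \ref{0203} and Corollary \ref{201509031402}(2); essentially no new argument is needed beyond bookkeeping, so I expect the proof to be short.

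For the forward implication, suppose $(X,T)$ is minimal and weakly mixing. Since every weakly mixing system is transitive compact, Corollary \ref{201509031402}(2) is available. Minimality means that $X$ itself is the unique minimal set, so every $x\in X$ is a minimal point whose containing minimal subset is $M=X$. Corollary \ref{201509031402}(2) then yields $\omega_{\mathcal{N}_T}(x)=\omega_T(x)=M=X$ for every $x\in X$, which is what we want. Note that both hypotheses are used here: weak mixing to secure transitive compactness, and minimality to identify the minimal subset containing $x$ as all of $X$.

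For the reverse implication, assume $\omega_{\mathcal{N}_T}(x)=X$ for every $x\in X$. Weak mixing is immediate: fixing any single point $x_0$ we have $\omega_{\mathcal{N}_T}(x_0)=X$, so the implication (3)$\Rightarrow$(1) of Proposition \ref{0203} applies. For minimality I would invoke the containment $\omega_{\mathcal{N}_T}(x)\subseteq\omega_T(x)$ recorded just after the definition of the $\omega_{\mathcal{F}}$-limit set; since $\omega_{\mathcal{N}_T}(x)=X$ while $\omega_T(x)\subseteq X$, this forces $\omega_T(x)=X$. Because $\omega_T(x)\subseteq\overline{\orb_T(x)}$, each point $x$ then has dense orbit, i.e. $\Tran(X,T)=X$, which is precisely the definition of minimality.

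The only step carrying any content is the deduction of minimality in the reverse direction, and even that reduces to the elementary observation that $\omega_T(x)=X$ forces $\overline{\orb_T(x)}=X$; everything else is a direct citation of the two previously established results.
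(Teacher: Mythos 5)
Your proof is correct and follows essentially the same route as the paper, which obtains this corollary precisely by combining Proposition \ref{0203} with Corollary \ref{201509031402}(2): weak mixing plus transitive compactness gives the forward direction via Corollary \ref{201509031402}(2), and Proposition \ref{0203} (3)$\Rightarrow$(1) gives weak mixing in the reverse direction. The only (inessential) variation is your minimality step, where you use the elementary containments $\omega_{\mathcal{N}_T}(x)\subseteq\omega_T(x)\subseteq\overline{\orb_T(x)}$ to conclude $\Tran(X,T)=X$, whereas the paper's citation suggests applying Corollary \ref{201509031402}(2) to a minimal point $x_0$ to get $X=\omega_{\mathcal{N}_T}(x_0)=M$; both are immediate.
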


\begin{cor} \label{201509061135}
Let $(X, T)$ be an $M$-system. Assume $\omega_{\mathcal{N}_T}(x)\neq \varnothing$ for any minimal point $x\in X$.
Then $(X, T)$ is weakly mixing. In particular, each transitive compact $M$-system is weakly mixing.
\end{cor}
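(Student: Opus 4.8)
The plan is to verify condition (2) of Proposition \ref{0203} on the dense set of minimal points, and then invoke the implication (2) $\Rightarrow$ (1) of that proposition.

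First I would fix a minimal point $x\in X$ and let $M$ be the minimal subset containing it, so that $\omega_T(x)=M$. By Lemma \ref{0200} the set $\omega_{\mathcal{N}_T}(x)$ is a closed, positively $T$-invariant subset of $X$, and since $\omega_{\mathcal{N}_T}(x)\subset \omega_T(x)=M$ it sits inside $M$. By hypothesis it is nonempty. I would then argue exactly as in Corollary \ref{201509031402}(2): a nonempty, closed, positively $T$-invariant subset of the minimal set $M$ must be all of $M$, since for any point in it its orbit closure is contained in the subset by positive invariance and equals $M$ by minimality. Hence $\omega_{\mathcal{N}_T}(x)=M$, and in particular $x\in \omega_{\mathcal{N}_T}(x)$. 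Note that this reasoning uses only nonemptiness and Lemma \ref{0200}, not any global transitive compactness assumption, so it applies here verbatim.

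Next, since $(X,T)$ is an M-system, the set $X'$ of its minimal points is dense in $X$, and by the previous paragraph $x\in \omega_{\mathcal{N}_T}(x)$ for every $x\in X'$. This is precisely condition (2) of Proposition \ref{0203}, so that proposition yields that $(X,T)$ is weakly mixing. For the ``in particular'' clause I would observe that a transitive compact system has $\omega_{\mathcal{N}_T}(x)\neq \varnothing$ for all $x\in X$ by definition, hence in particular for every minimal point; thus the hypothesis of the first assertion is automatically satisfied and weak mixing follows.

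The only delicate point is the step identifying $\omega_{\mathcal{N}_T}(x)$ with $M$, which hinges on combining the positive invariance supplied by Lemma \ref{0200} with the assumed nonemptiness and the minimality of $M$; but given the earlier results this is routine, so I expect no genuine obstacle. Everything else is a direct transcription into the hypotheses of Proposition \ref{0203}.
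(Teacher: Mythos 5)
Your proposal is correct and follows essentially the same route as the paper, which obtains this corollary by combining Corollary \ref{201509031402}(2) with Proposition \ref{0203}: the argument of Corollary \ref{201509031402}(2) (Lemma \ref{0200} plus nonemptiness plus minimality of $M$) gives $\omega_{\mathcal{N}_T}(x)=M\ni x$ for every minimal point $x$, and density of minimal points in an M-system then verifies condition (2) of Proposition \ref{0203}. Your explicit remark that this step needs only the nonemptiness hypothesis, not global transitive compactness, is exactly the observation the paper relies on implicitly.
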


Remark that Downarowicz and Ye constructed a ToP system with positive entropy which is not totally transitive (and hence not weakly mixing) \cite[Theorem 2]{Downarowicz-Ye}, recall that a system $(X, T)$ is a \emph{ToP system} if every point is either transitive or periodic. Thus by Corollary \ref{201509061135} the constructed system is not transitive compact.

Note that if the system $(X,T)$ is
topologically mixing then it is easy to show from the definitions that
$\omega_{\mathcal{N}_T}(x)$ coincides with $\omega_{T}(x)$ for any point $x\in  X$.
In the following let us show that it may happen $\omega_{\mathcal{N}_T}(x)\subsetneq
\omega_T(x)$  for a weakly mixing system $(X,T)$ and some point $x\in X$. Remark that in Proposition \ref{201509042231} we have proved the invariance of the $\omega_{\mathcal{N}_T}$-limit sets.

\begin{thm} \label{newprop}
There is a weakly mixing system $(X, T)$ and a point $x \in X$ such that $\omega_{\mathcal{N}_T}(x)\subsetneq \omega_T(x).$
\end{thm}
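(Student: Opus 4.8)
My plan is to exhibit an explicit symbolic example, guided by the observation recorded just before the statement that $\omega_{\mathcal{N}_T}(x)=\omega_T(x)$ whenever the system is topologically mixing, and by Corollary \ref{20160110}, which forces equality in the minimal weakly mixing case. The witness must therefore be \emph{weakly mixing, non-minimal, and not topologically mixing}. I first isolate the reduction I will use. Realise $T$ as the shift on a subshift $X$. If $(X,T)$ is weakly mixing then every $N_T(U,V)$ is thick; if it fails to be topologically mixing there are opene cylinders $U_0,V_0$ with $N_T(U_0,V_0)$ thick but not cofinite, so $A:=\mathbb{Z}_+\setminus N_T(U_0,V_0)$ is infinite. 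Suppose in addition I can arrange a \emph{transitive} point $x\in X$ together with a cylinder $G_0=C[w_0]$ such that every return time of $x$ to $G_0$ lies in $A$, that is $N_T(x,G_0)\subseteq A$. Then for any $z\in G_0\cap X$ we have $z\in\omega_T(x)=X$ (because $x$ is transitive and $X$ has no isolated points), while $G_0$ is a neighbourhood of $z$ and $N_T(x,G_0)\cap N_T(U_0,V_0)=\varnothing$; the triple $(G_0,U_0,V_0)$ then witnesses $z\notin\omega_{\mathcal{N}_T}(x)$, giving $\omega_{\mathcal{N}_T}(x)\subsetneq\omega_T(x)$. (As a consistency check, Lemma \ref{0201} predicts the returns to $G_0$ are non-syndetic, which is automatic since $A$ is the complement of a thick set.)

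It is worth noting why no soft argument can replace the construction. By Proposition \ref{0203}(4) the set $\{x:\omega_{\mathcal{N}_T}(x)=X\}$ is residual, and the transitive points also form a residual set, so a Baire category argument cannot single out a transitive point lying outside the former set; consequently $X$, $x$, $G_0$, $U_0$ and $V_0$ must all be built by hand. I would build $X$ as an explicit coded subshift over a finite alphabet carrying a distinguished \emph{marker} word $w_0$, with two competing design requirements. To secure weak mixing I make the admissible ``content'' inserted between markers rich enough that, for every pair of cylinders $U,V$, the transition set $N_T(U,V)$ is thick; this is the criterion of \cite{Furstenberg1967} that I will verify directly. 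To destroy topological mixing I govern the marker-to-marker gap lengths by a set that is thick but co-infinite and, crucially, \emph{aperiodic}: a merely periodic (congruence) obstruction would produce a nontrivial rotation factor, hence a nontrivial maximal equicontinuous factor, which is incompatible with weak mixing, so the obstruction to mixing has to be of the infinitary, Chacon-like type rather than arithmetic.

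With such an $X$ fixed, I construct $x$ by concatenation. Diagonally enumerating all words of the language of $X$ and inserting each of them as I build $x$ guarantees that $\orb_T(x)$ is dense, i.e. $x$ is transitive and $\omega_T(x)=X$. At the same time I schedule every occurrence of the marker word $w_0$ only at positions belonging to $A$; because $w_0$ carries the marker symbol it cannot arise ``by accident'' inside the connecting content, so that $N_T(x,G_0)=N_T\bigl(x,C[w_0]\bigr)\subseteq A$, exactly as demanded by the reduction. Choosing any $z\in C[w_0]\cap X$ then completes the argument.

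The main obstacle is the simultaneous fulfilment of the two competing demands on $X$. Weak mixing forces $N_T(U,V)$ to be thick for \emph{every} pair of opene sets, which wants richly connected, gap-filling dynamics, whereas the separation needs a persistent \emph{infinite} family of forbidden return times, i.e. one transition set $N_T(U_0,V_0)$ that remains co-infinite. Reconciling these is precisely the point where the mixing obstruction must be made aperiodic, so that it survives the passage to each $T^k$ and leaves no equicontinuous factor, while all other transition sets are still thick. Verifying thickness of every $N_T(U,V)$ for the hand-built coded system, and confirming that the marker discipline genuinely confines $N_T(x,G_0)$ to $A$, is the technical heart of the proof.
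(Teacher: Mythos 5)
Your opening reduction is correct, and your meta-observations are sound: the witness must be weakly mixing, non-minimal and non-mixing, and by Proposition \ref{0203} no category argument can produce it. Your overall strategy --- a subshift whose failure of mixing is governed by a thick, co-infinite, aperiodic set of gaps, plus a hand-built point whose returns to a marker cylinder avoid one transition set $N_T(U_0,V_0)$ --- is in fact the strategy of the paper, which uses the spacing shift $\Lambda_P=\{x\in\Sigma: x_i=x_j=1,\ i\neq j\Rightarrow|i-j|\in P\}$ with $P=\{10^n+s: n\in\mathbb{N},\ 1\le s\le n\}$, quoting \cite{LaZa} for weak mixing and non-mixing. But your proposal stops exactly where the proof must begin: no subshift is exhibited, weak mixing is never verified, and the marker scheduling is never shown to be realizable; you yourself defer all of this as ``the technical heart.'' Worse, the framework you commit to is internally inconsistent. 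In a genuinely coded subshift the marker word is synchronizing (that is what licenses free concatenation), and for a transitive subshift with a synchronizing word $w_0$, weak mixing already implies topological mixing: the return times of $w_0$ to itself form an additive semigroup; by thickness this semigroup contains two consecutive, hence coprime, integers, so it is cofinite; and synchronization then makes every $N_T(U,V)$ cofinite. In that setting $A=\mathbb{Z}_+\setminus N_T(U_0,V_0)$ is finite and your reduction can never be fed. Escaping this requires an infinite-range constraint --- a non-synchronized marker --- which is exactly what the spacing shift provides; your remark that the obstruction must be ``Chacon-like rather than arithmetic'' gestures at this but supplies nothing concrete.

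A second genuine gap comes from insisting that $x$ be transitive, which is stronger than the theorem needs. Transitivity forces \emph{every} word of the language into $x$, including words containing several markers at prescribed relative positions, and all of those occurrences must land in $A$. Hence $A$ --- which is determined by $X$ and $(U_0,V_0)$, not chosen by you --- must contain a translate of every admissible marker pattern, and in addition the concatenation must respect the global pairwise-difference constraint of the subshift \emph{across} the inserted words. Neither requirement is identified, let alone checked; your remark that $w_0$ ``cannot arise by accident'' misses the point, since the problematic occurrences are not accidental but mandatory. The paper avoids both issues by not asking for a transitive point: its $x=W0^{a_1}W0^{a_2}W\dots$ (with $W=10^{10}1$ and $a_n=10^{b_n+12}$) repeats a \emph{single} word separated by enormous blocks of zeros, which makes membership in $\Lambda_P$ a short check, and it needs only the single point $y=10^{10}10^{\infty}$ to lie in $\omega_T(x)\setminus\omega_{\mathcal{N}_T}(x)$; the disjointness $N_T(x,U_y)\cap N_T(U,V)=\varnothing$ for $U=C[10^{10}1]$, $V=C[010^{10}1]$ is then explicit arithmetic with $P$. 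As written, your proposal is a program whose central constructions are missing and whose stated framework would fail; to rescue it you would have to abandon the coded (synchronized) setting for a spacing-type subshift and prove the pattern-translation property of $A$ --- or drop transitivity of $x$ and follow the paper's route.
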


\begin{proof} Let $\Sigma=\{0,1\}^{\mathbb{Z}_+}$ and  $\sigma :\Sigma \to \Sigma$ be the full (one-side) shift.
Recall that the base for the open sets in $\Sigma$ is given by the collection of all cylinder sets
$C[c_0c_1c_2 \dots c_m]=\{x \in \Sigma:~ x_i=c_i \text{ for } i\leq m \}.$

Let $P$ be a subset of $\mathbb{N}$ and we define $\Lambda_P=\{x \in \Sigma: x_i=x_j=1,~i \neq j \Rightarrow |i-j| \in P\}.$
The set $\Lambda_P$ is a closed $\sigma$-invariant subset of $\Sigma$, i.e., $\Lambda_P$ is a subshift (in the case of $\Lambda_P\neq \varnothing$).
So we can consider the dynamical system
$ (X, T):= (\Lambda_P, \sigma|_{\Lambda_P})$. In \cite{LaZa} (see also \cite{JiSn2003}) the authors proved that if $P$ is thick,
 then $(X, T)$ is weak mixing, and if $\mathbb{N} \setminus P$ is infinite, then $(X, T)$ is
 not topologically mixing.

Let $P:=\{10^n+s: n \in \mathbb{N}, 1 \leq s \leq n \}.$ Obviously $P$ is thick. $(X, T)$ is a weakly mixing system,
but not topologically mixing. So, there is a chance for it to have a point with the desired property.
We are going to define points $x, y \in X$ such that there is a neighbourhood $U_y$ of $y$ and opene $U$, $V$
with $N_T(x, U_y) \cap N_T(U, V) = \varnothing$.

Let $y=10^{10}10^\infty$. We have $y \in X$ since the only positions of one's in this sequence are $0$ and $11$ and $11 \in P$.
Denote $U_y=C[10^{10}1]$.

We define $x$ as a limit of a sequence of the starting blocks $(W_n)_{n=1}^\infty$. We do it by induction. On every step in
order to define the next block we add to the previous one some amount of one's and zero's. Let $W_0=W=10^{10}1$ and suppose that
we have already defined $W_n$. Let $b_n$ be the position of the last digit in $W_n$
(the length of $W_n$ is $b_n+1$). To obtain $W_{n+1}$ we define the digits on the positions $s$ for
$b_n < s \leq 10^{b_n+12}+b_n+12$. We set $1$ on the following positions:
$10^{b_n+12}+b_n+12$ and $10^{b_n+12}+b_n+1$. For the other $s$ from this interval let $x_s=0$.
We see that $W_{n+1}=W_n0^{a_n}W$ for $a_n=10^{b_n+12}$ and then $x=W0^{a_1}W0^{a_2}W\dots$ In order to prove that $x \in X$
we need to check if for every $n \geq 0$, $W_{n+1}$ satisfies the condition of the
space $X$. Under the
assumption that $W_n$ satisfies this condition we need to check only the elements which we have added. Among these elements
we had only two one's. The difference between the numbers of their positions is $11$. We have that $10^{b_n+12}+s \in P$,
where $s \in [1, b_n+12]$. Then, for any $0 \leq i \leq b_n$ with $x_i=1$ we have $10^{b_n+12}+b_n+12-i \in P$ and
$10^{b_n+12}+b_n+1-i \in P$. Hence $x$ which we have constructed as a limit of the blocks $W_n$ belongs to $X$. Finally, it is easy
to see that $a_n=10^{b_n+12}>2b_{n} \geq 2a_{n-1}$ so $a_n$
tends to infinity. Moreover, $y \in \omega_T(x)$ follows from
$N_T(x, U_y)=\{10^{b_n+12}+b_n+1 | n \in \mathbb{N}\}$, because the numbers which belong to this set are exactly the
positions where every block $W$ starts.

Now we need to define $U$ and $V$. Let $U=U_y=C[10^{10}1]$ and $V=C[010^{10}1]$. These two open sets have a nonempty intersection
with $X$, and so $N_T(U, V)$ is a thick set. We have that $s+12 \in N_T(U, V)$ (implying $s\in \mathbb{N}$) if and only if there
exists a point $z \in X$ which begins by $10^{10}1A(s)010^{10}1$ where the length of $A(s)$ is equal to $s$.
Thus we have $s+13, s+24, s+2 \in P$. Then $10^m+1 \leq s+2, s+13, s+24 \leq 10^m+m$ for some $m\in \mathbb{N}$. So $10^m+11 \leq s+12
\leq 10^m+m-12$. So in other words for every $k \in N_T(U, V)$ there is $m\in \mathbb{N}$ such that $10^m+11 \leq k \leq 10^m+m-12$.
But now it is easy to see that $k \neq 10^{b_n+12}+(b_n+12)-11 (= 10^{b_n+12}+ b_n+ 1)$, so $k$ can not be an element of $N_T(x, U_y)$.
\end{proof}


Though a transitive compact system is weakly mixing if we add some weak sufficient conditions over the system, there exist many transitive compact, but not weakly mixing systems. Before proceeding, first we need the following property of the proximal systems. Recall that a system is
proximal if and only if it contains the unique fixed point, which is the only minimal point of the system \cite{AK}.

\begin{lem} \label{0205}
Let $(X, T)$ be a proximal dynamical system, $p\in X$ its fixed point with a neighborhood $U_p$ and $x\in X$.
Then the set $N_T(x, U_p)$ is thickly syndetic.
\end{lem}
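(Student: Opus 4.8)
The plan is to deduce the combinatorial conclusion from a single \emph{uniform} recurrence estimate. Fix the run-length parameter $N\in\mathbb{N}$; by the definition of thickly syndetic recalled in the introduction it suffices to show that the set of positions at which a run of length $N$ inside $N_T(x,U_p)$ begins is syndetic, with a gap bound depending only on $N$ and $U_p$ and not on the starting iterate. The first input is that, since $(X,T)$ is proximal, every point $y\in X$ is proximal to the fixed point $p$: indeed $\liminf_n d(T^n y,T^n p)=\liminf_n d(T^n y,p)=0$ because $Tp=p$. Hence there is a sequence $n_k\to\infty$ with $T^{n_k}y\to p$.

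The second, and decisive, input is a continuity observation that upgrades single visits to $U_p$ into runs. From $T^{n_k}y\to p$ and the continuity of $T$ (together with $Tp=p$) we get $T^{n_k+j}y\to p$ for each fixed $j\ge 0$, so for $k$ large all of $T^{n_k}y,\dots,T^{n_k+N-1}y$ lie in the open set $U_p$. Thus for every $y\in X$ there is an integer $m_y\ge 0$ with $T^{m_y+j}y\in U_p$ for $0\le j\le N-1$, and by continuity there is a neighborhood $O_y$ of $y$ on which the finitely many maps $T^{m_y},\dots,T^{m_y+N-1}$ all send $O_y$ into $U_p$. Now compactness of $X$ enters: I would extract a finite subcover $O_{y_1},\dots,O_{y_r}$ of the open cover $\{O_y\}_{y\in X}$ and set $M=\max_{1\le i\le r} m_{y_i}$. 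This yields the uniform statement I am after: for every $z\in X$ there exists $m=m(z)\in\{0,1,\dots,M\}$ with $T^{m+j}z\in U_p$ for all $0\le j\le N-1$.

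Finally I would apply this uniform statement along the orbit of $x$. For each $n\in\mathbb{Z}_+$, taking $z=T^n x$ produces $m(n)\in[0,M]$ such that $T^{\,n+m(n)+j}x\in U_p$ for $0\le j\le N-1$; that is, a run of length $N$ inside $N_T(x,U_p)$ begins at the position $n+m(n)\in[n,n+M]$. Hence every window of length $M+1$ contains the beginning of a length-$N$ run, so these beginnings form a syndetic set. Since $N$ was arbitrary, $N_T(x,U_p)$ is thickly syndetic. The one genuinely substantive step is the passage from pointwise recurrence to the uniform bound $M$: it is exactly here that compactness of $X$ and the fact that proximality is available at \emph{every} point (not merely at $x$) are used, whereas the promotion of isolated hits to runs is a soft continuity argument. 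Everything else is bookkeeping with the definitions of syndetic and thickly syndetic.
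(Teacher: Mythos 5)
Your proof is correct and is essentially the paper's argument in repackaged form: the paper first shows that $N_T(z,V)$ is syndetic for every neighborhood $V$ of $p$ (via the open cover $\{T^{-i}V : i\in\mathbb{Z}_+\}$ and compactness) and then applies this with $V=\bigcap_{i=0}^{n}T^{-i}U_p$, which is a neighborhood of $p$ precisely because $p$ is fixed; your neighborhoods $O_y$ satisfy $O_y\subset T^{-m_y}\bigl(\bigcap_{j=0}^{N-1}T^{-j}U_p\bigr)$, so your single compactness argument runs over the very same cover and produces the same uniform gap bound. The only other difference is cosmetic: you obtain recurrence of every orbit to $p$ directly from proximality of all pairs together with $Tp=p$, whereas the paper invokes that $p$ is the unique minimal point — both are immediate consequences of the facts recalled in Section 2.
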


\begin{proof}
First we are going to prove that $N_T(x, U_p)$ is a syndetic set. As $p$ is the unique minimal point of the system,
for any point $z \in X$ there is $m(z)\in \mathbb{Z}_+$ with
$T^{m(z)}z \in U_p$, and then the family of open sets $\{T^{-i}U_p: i \in \mathbb{Z}_+\}$ forms an open cover
of $X$. By the compactness of the space $X$, there exists an integer $N\in \mathbb{N}$ such that
$\{U_p, T^{- 1} U_p, \dots , T^{-N}U_p\}$ forms a cover of $X$.
Then, for every $m\in \mathbb{Z}_+$, $T^{m+ i}x\in U_{p}$ for some $0\le i\le N$, which shows that $N_T(x, U_p)$ is a
syndetic set with gaps bounded by the integer $N$.

Now for each $n\in \mathbb{N}$ we define $U_{p,n}=\bigcap_{i=0}^n T^{-i}U_{p}$, which is a neighborhood
of $p$ as $U_p$ is a neighborhood of the fixed point $p$. Observe $\{m, m+1, \dots, m+n\} \subset N_T(x, U_{p})$
for each $m \in N_T(x, U_{p,n})$, where $N_T(x, U_{p,n})$ is a syndetic set by the above argument. Then $N_T(x, U_p)$
is thickly syndetic by the arbitrariness of $n$.
\end{proof}

We also need the following

\begin{lem} \label{201509062214}
Let $(X, T)$ be a minimal weakly mixing system, $x\in X$ with a neighborhood $U_x$ and $n, m\in \mathbb{N}$. Then the set $\{k\in \mathbb{Z}_+: T^{k n+ m} x\in U_x\}$ is syndetic.
\end{lem}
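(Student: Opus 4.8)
The plan is to reduce the statement to the classical fact that in a \emph{minimal} system every point returns syndetically to every opene set, applied not to $T$ but to the power map $S:=T^n$. Writing $z:=T^m x$, the set in question is exactly
$$N_S(z,U_x)=\{k\in\mathbb{Z}_+: S^k z\in U_x\}=\{k\in\mathbb{Z}_+: T^{nk+m}x\in U_x\},$$
so it suffices to establish two things: (i) that $(X,S)$ is again minimal, and (ii) that in any minimal system the return-time set $N_S(z,U)$ is syndetic for every point $z$ and every neighborhood $U$. I expect step (i) to be the only substantial point.

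Step (ii) is general and easy, so I would dispatch it first. Shrinking $U_x$, I may assume $U'\subseteq U_x$ is opene with $x\in U'$. By minimality of $(X,S)$ the forward orbit of every point meets $U'$, so $\{S^{-i}U': i\ge 0\}$ covers $X$; by compactness finitely many suffice, say for $0\le i\le N$. Then for any $z$ and any $j\ge 0$ the point $S^j z$ lies in some $S^{-i}U'$ with $0\le i\le N$, whence $S^{j+i}z\in U'\subseteq U_x$. Thus $N_S(z,U_x)$ meets every block of $N+1$ consecutive integers and is syndetic. Applying this with $z=T^m x$ finishes the lemma once (i) is known.

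The main work, and the only place where weak mixing enters, is step (i): proving $(X,T^n)$ is minimal. Here I would argue that weak mixing rules out a nontrivial cyclic factor. Let $M$ be an $S$-minimal subset (which exists by Zorn's Lemma). Since $T$ commutes with $S=T^n$, each $T^jM$ is again $S$-minimal and $T^nM=M$, so the distinct sets among $\{T^jM\}$ are finitely many pairwise disjoint minimal sets $M_0=M,\,M_1=TM,\dots,M_{d-1}=T^{d-1}M$ with $d\mid n$, cyclically permuted by $T$. Their union is closed, nonempty and $T$-invariant, hence equals $X$ by minimality of $(X,T)$. If $d\ge 2$, these $M_j$ are disjoint nonempty closed (hence also open) sets partitioning $X$, and $M_j\mapsto j$ defines a factor map from $(X,T)$ onto the cyclic rotation on $\mathbb{Z}/d\mathbb{Z}$; but a factor of a weakly mixing system is weakly mixing, whereas a nontrivial finite rotation is not even transitive on its square. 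This contradiction forces $d=1$, i.e. $M=X$ and $(X,S)$ is minimal. I would stress that the hypothesis of weak mixing is genuinely needed here: the result of \cite{Ye1992} only gives that $x$ is a minimal point of $(X,T^n)$, not that $(X,T^n)$ itself is minimal, and without total minimality the $S$-orbit of $z=T^m x$ could live in a minimal set disjoint from a small $U_x$, making the conclusion fail.
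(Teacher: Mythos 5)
Your proof is correct, and it shares the paper's two-step skeleton --- first show that $(X,T^n)$ is minimal, then deduce syndetic return times of $T^m x$ under $T^n$ --- but both steps are carried out by genuinely different arguments. For the key step (i), the paper does not use your clopen cyclic decomposition: it combines total transitivity (a consequence of weak mixing) with the cited result \cite[Corollary 3.2]{Ye1992} that every minimal point of $(X,T)$ is a minimal point of $(X,T^n)$; since $(X,T)$ is minimal, every point of the transitive system $(X,T^n)$ is then a minimal point, which forces $(X,T^n)$ to be minimal. You instead prove this from scratch via the decomposition $X=M\sqcup TM\sqcup\dots\sqcup T^{d-1}M$ into pairwise disjoint $T^n$-minimal clopen sets and the induced factor onto the rotation of $\mathbb{Z}/d\mathbb{Z}$, which weak mixing rules out unless $d=1$. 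Your route is self-contained (no appeal to Ye's theorem) and in fact establishes the stronger standard fact that a weakly mixing --- indeed any totally transitive --- minimal system is totally minimal; note that total transitivity alone already excludes $d\ge 2$, since $M$ would then be a proper clopen set with $T^nM=M$, contradicting transitivity of $T^n$. For step (ii), you use the classical covering argument (finitely many of the sets $S^{-i}U'$ cover $X$, bounding the gaps), while the paper argues by contradiction: arbitrarily long gaps produce a limit point $x_0$ whose $T^n$-orbit stays in $\overline{X\setminus U_x}$, contradicting minimality of $(X,T^n)$; these two arguments are essentially equivalent in content. Finally, your closing caveat is accurate and consistent with the paper: Ye's theorem alone does not yield minimality of $(X,T^n)$ (the identity map shows that ``every point is minimal'' is weaker than ``minimal''), and the paper must, and does, also invoke total transitivity at that point.
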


\begin{proof}
Observe that each weakly mixing system is totally transitive, in particular, $(X, T^n)$ is transitive. Note that each minimal point of $(X, T)$ is also a minimal point of $(X, T^n)$ by \cite[Corollary 3.2]{Ye1992}, in particular, any point $x\in X$ is a minimal point of $(X, T^n)$. Then we obtain that $(X, T^n)$ is a minimal system.

Now assume the contrary that the set $\{k\in \mathbb{Z}_+: T^{k n+ m} x\in U_x\}$ is not syndetic, and then there exists an increasing sequence $k_1< k_2< \dots$ such that $\{T^{k_i n+ m} x, T^{(k_i+ 1) n+ m} x, \dots, T^{(k_i+ i) n+ m} x\}\subset U_x^c$ for each $i\in \mathbb{N}$. By choosing a subsequence if necessary we may assume that the sequence $T^{k_i n+ m} x$ tends to $x_0\in X$ as $i$ tends to infinity. And then $\orb_{T^n} (x_0)= \{x_0, T^n x_0, T^{2 n} x_0, \dots\}\subset \overline{U_x^c}\subsetneq X$ by the construction, a contradiction to the minimality of the system $(X, T^n)$. That is, $\{k\in \mathbb{Z}_+: T^{k n+ m} x\in U_x\}$ is a syndetic set.
\end{proof}

Now we can show the existence of non totally transitive, transitive compact systems. Recall that each weakly mixing system is totally transitive.

\begin{thm} \label{0206}
There are non totally transitive, transitive compact systems.
\end{thm}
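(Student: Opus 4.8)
The plan is to produce explicit examples, and in fact one proximal and one non-proximal example. Two structural constraints guide the construction. First, by Corollary~\ref{201509061135} such a system cannot be an M-system, so its minimal points must be very scarce. Second, a genuine ``periodic'' decomposition into disjoint clopen pieces is fatal: if $X=X_0\sqcup\dots\sqcup X_{d-1}$ with $T(X_i)=X_{i+1}$ cyclically, then for opene $U\subseteq X_i$, $V\subseteq X_j$ one has $N_T(U,V)\subseteq\{n:n\equiv j-i\ (\mathrm{mod}\ d)\}$, and likewise $N_T(x,G)$ lies in a single residue class whenever $G$ is a small neighbourhood of a point interior to one piece; since $\mathcal N_T$ contains sets in every residue class, no such $z$ can meet all of them and $\omega_{\mathcal N_T}(x)=\varnothing$. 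Hence I would build a system whose period-$2$ structure is real (so total transitivity fails) but whose two ``wings'' are glued along a small invariant set approached from both sides, so that the return times of any point to a neighbourhood of that set occupy both parities and are fat enough to meet every $N_T(U,V)$.

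For the proximal example I stay inside the class $\Lambda_P$ of Theorem~\ref{newprop}. Fix a thick set $Q\subset\mathbb N$ whose elements are clustered and bounded away from $0$ (for instance $Q=\{10^n+s:n\in\mathbb N,\ 1\le s\le n\}$ as in Theorem~\ref{newprop}), set $P=2Q$, and let $(X,T)=(\Lambda_P,\sigma|_{\Lambda_P})$. Because any two $1$'s of a point of $\Lambda_{2Q}$ differ by an even number, all the $1$'s of a given point share one parity, so $X=X_{\mathrm{even}}\cup X_{\mathrm{odd}}$ with $X_{\mathrm{even}}\cap X_{\mathrm{odd}}=\{0^\infty\}$, where $X_{\mathrm{even}}$ (resp.\ $X_{\mathrm{odd}}$) is the set of points whose $1$'s sit at even (resp.\ odd) positions. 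The map $x\mapsto(x_{2k})_{k\ge0}$ identifies $(X_{\mathrm{even}},\sigma^2)$ with $(\Lambda_Q,\sigma)$, which is weakly mixing since $Q$ is thick \cite{LaZa,JiSn2003}. Consequently $(X,T)$ is transitive (the $\sigma$-orbit of a $\sigma^2$-transitive point of $X_{\mathrm{even}}$ is dense in both wings), while $\sigma^2$ maps the proper closed set $X_{\mathrm{even}}$, which has nonempty interior (it contains the cylinder $C[1]\cap X$), into itself, so $(X,\sigma^2)$ is not transitive and $(X,T)$ is not totally transitive. Finally, the clustering of $P$ near the numbers $2\cdot 10^n$ prevents any point with infinitely many $1$'s from having bounded gaps between consecutive $1$'s; hence $0^\infty$ is the unique minimal point, and by the criterion recalled before Lemma~\ref{0205} the system is proximal.

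Transitive compactness of this example follows from Lemma~\ref{0205}. Fix $x\in X$ and opene $U,V$, and take the neighbourhood $U_p=C[0^m]$ of the fixed point $p=0^\infty$. By Lemma~\ref{0205} the set $N_T(x,U_p)$ is thickly syndetic, so it contains length-$2$ runs of consecutive integers beginning syndetically. On the other hand, weak mixing of $(X_{\mathrm{even}},\sigma^2)$ makes each $N_T(U,V)$ thick inside the parity class where it lives, so it contains all the even (or all the odd) integers of arbitrarily long intervals; such a set necessarily meets any set that contains length-$2$ runs appearing syndetically. Thus $N_T(x,U_p)\cap N_T(U,V)\neq\varnothing$ for every $U,V$ and every $m$, i.e.\ $p\in\omega_{\mathcal N_T}(x)$. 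As $x$ was arbitrary, $\omega_{\mathcal N_T}(x)\supseteq\{p\}\neq\varnothing$ for all $x$, so $(X,T)$ is a proximal, non totally transitive, transitive compact system.

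For the non-proximal example I keep the period-$2$ mechanism but replace the single gluing point $0^\infty$ by a nontrivial minimal weakly mixing set $M$ that is $T$-invariant, common to both wings, and approached from within each of them, with $T$ interchanging the two wings off $M$. Then $(X,T^2)$ preserves each wing so total transitivity still fails, whereas $M$ provides minimal points other than a fixed point, so the system is not proximal (and, its minimal points being non-dense, not an M-system, consistently with Corollary~\ref{201509061135}). The role of Lemma~\ref{0205} is now taken by Lemma~\ref{201509062214}: the return times of any $x$ to a neighbourhood of $M$ split, along the two residue classes, into sets of the form $\{k:T^{kn+m}x\in U\}$, which can be shown syndetic via Lemma~\ref{201509062214}, and this fatness is again enough to meet every $N_T(U,V)$, giving $\omega_{\mathcal N_T}(x)\cap M\neq\varnothing$. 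The main obstacle is the explicit construction itself, especially in the non-proximal case: one must realise the two wings together with the common minimal weakly mixing core $M$ as a single system in which $T$ genuinely interchanges the wings, $M$ is approached from both of them so that returns populate both residue classes, and the weak-mixing and minimality hypotheses of Lemmas~\ref{0205} and~\ref{201509062214} are available. Verifying transitivity, the failure of total transitivity, the prescribed (non)proximality, and the fatness of the hitting-time sets simultaneously is where the real work lies; once these are in place, the passage to $\omega_{\mathcal N_T}(x)\neq\varnothing$ is the short argument given above.
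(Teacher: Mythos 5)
Your proximal example is correct and it alone proves the theorem as literally stated, but it takes a genuinely different route from the paper's. The paper takes an abstract nontrivial proximal \emph{topologically mixing} system $(Y,F)$, forms the wedge sum with a disjoint copy by identifying the two fixed points, and lets $T$ swap the copies; topological mixing of the wings makes $N_T(U,V)\supset\{k,k+2,k+4,\dots\}$ (cofinite in one parity), so its intersection with the thickly syndetic set $N_T(x,G_p)$ supplied by Lemma~\ref{0205} is immediate. You instead realize the same period-two mechanism inside a concrete spacing subshift: $X=\Lambda_{2Q}$ splits as $X_{\mathrm{even}}\cup X_{\mathrm{odd}}$ glued at the fixed point $0^\infty$, with $(X_{\mathrm{even}},\sigma^2)\cong(\Lambda_Q,\sigma)$ only \emph{weakly mixing} (it is not mixing, by \cite{LaZa}, since $\mathbb{N}\setminus Q$ is infinite). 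Consequently your $N_T(U,V)$ is only ``all integers of one parity in arbitrarily long intervals'', and you correctly compensate with the finer observation that a thickly syndetic set, containing length-$2$ runs syndetically, must meet every such parity-thick set. Your proofs of proximality (the huge gaps of $2Q$ forbid syndetic sets of $1$-positions, so $0^\infty$ is the only minimal point) and of non-total transitivity ($X_{\mathrm{even}}$ is a proper closed $\sigma^2$-invariant set with nonempty interior) are sound. Two harmless imprecisions: for $U$ or $V$ meeting both wings one should first shrink them into single wings (legitimate, since shrinking shrinks $N_T(U,V)$), and the mixed-parity case of the thickness claim needs one extra line (pull $V$ back by $\sigma$ into the opposite wing before applying weak mixing).

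The non-proximal half, however, is a genuine gap: you describe the desired structure (two wings interchanged by $T$, glued along a common invariant minimal weakly mixing core $M$) but explicitly leave its realization as ``the real work''. The paper closes exactly this gap with one simple idea you missed: take the product of the proximal example $(X,T)$ with any nontrivial minimal topologically mixing system $(Z,R)$. The product $(X\times Z,T\times R)$ is automatically non totally transitive and non-proximal, and it is precisely an instance of your picture: the wings are $Y\times Z$ and $Y_c\times Z$, glued along the minimal core $M=\{p\}\times Z$, which $T\times R$ preserves while swapping the wings off it. Transitive compactness is then verified just as you anticipated: $N_T(x,G_p)$ is thickly syndetic by Lemma~\ref{0205}, $N_R(z,G_z)$ intersected with one parity class is syndetic by Lemma~\ref{201509062214}, and $N_{T\times R}(U,V)\supset\{k,k+2,\dots\}$; Lemma~\ref{0200} and minimality of $(Z,R)$ then even give $\omega_{\mathcal{N}_{T\times R}}(x,z)=\{p\}\times Z$. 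Note that this clean verification leans on the wings of the paper's example being topologically mixing; grafting the product trick onto your weakly mixing subshift example would require an extra (and more delicate) intersection argument, since there $N_T(U,V)$ is only parity-thick. So relative to the paper's proof, which exhibits examples in both the proximal and non-proximal cases (as the introduction advertises), your proposal establishes only the proximal case.
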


\begin{proof} We will show the existence of such systems in proximal case, and then by modifying the construction to show the existence in non-proximal case.


1. \emph{Proximal case}. Let $(Y,F)$ be a nontrivial proximal, topologically mixing system (for the existence of such a system see some example \cite{HeZ2002}). Let $q\in Y$ be its (unique) fixed point and take a copy of it:
the system $(Y_c, F_c)$ with its (unique) fixed point $q_c\in Y_c$. Suppose that $Y$ and $Y_c$ are disjoint and consider
the wedge sum $X:= Y\vee Y_c$, i.e., the quotient space of the disjoint union of $Y$ and $Y_c$ by identifying $q$ and $q_c$, and then both topological spaces $(Y,q)$ and $(Y_c, q_c)$ with base points look like subspaces of the wedge sum with the subspace
topology. Let us define a self-map $T$ over $X$ as follows:
\[T:x \mapsto
  \begin{cases}
    F_cy_c &\mbox {if }
x =y \in Y\\
    Fy &\mbox{if } x = y_c \in Y_c
  \end{cases}
\]
for any point $x \in X$. By the construction it is not hard to show that the map $T: X\to X$ is
a continuous surjection; and the system $(X, T)$ is not totally
transitive (observing $T^2 Y\subset Y$ and $T^2 Y_c\subset Y_c$) and proximal with the unique minimal (in fact fixed) point $p:= q\sim q_c$, moreover, it is a transitive system, in fact, for any opene subsets $U,V$ of $X$ we may find $k\in \mathbb{Z}_+$
with $N_T(U, V)\supset \{k, k+2, \dots\}$ (observing that the systems $(Y,F)$ and its copy $(Y_c,F_c)$ are both topologically mixing).

Now let us show that the system $(X,T)$ is transitive compact by proving $p\in \omega_{\mathcal{N}_T}(x)$ for each point $x\in X$. Let $G_p$ be a neighborhood of the fixed point $p\in X$ and $U,V$ be opene
subsets of $X$.
As $N_T(x, G_p)$ is thickly syndetic by Lemma \ref{0205} and $N_T(U, V)\supset \{k, k+2, \dots\}$ for some $k\in \mathbb{Z}_+$, we have $N_T(x, G_p) \cap N_T(U, V)\ne \varnothing$.

In fact we can obtain more by proving $\omega_{\mathcal{N}_T}(x)= \{p\}$ for each $x\in X$ (note that since $(X,T)$ is topologically transitive,
there exists a dense $G_\delta$ subset $X_0$ of $X$ with $\omega_T (x)= X$ for each $x\in X_0$, in particular, $\omega_{\mathcal{N}_T}(x)\subsetneq \omega_T (x)$ for each $x\in X_0$). It is easy to check it for the case of $x= p$.
  Now assume that $x\in Y\setminus Y_c$.
Let $z\in Y$ be a point different from $p$. Take a neighborhood $G_z\subset Y\setminus Y_c$. It is easy to check that $N_T (x, G_z)$ consists of only even numbers. Now in $Y$ we take an opene subset $U$ of $X$ and in $Y_c$ we take an opene subset $V$ of $X$. It is also easy to check that $N_T (U, V)$ consists of only odd numbers, in particular, $N_T (x, G_z)\cap N_T (U, V)= \varnothing$, and then $z\notin \omega_{\mathcal{N}_T}(x)$. We can prove similarly that $\omega_{\mathcal{N}_T}(x)\cap Y_c\setminus Y= \varnothing$. Summing up, we obtain $\omega_{\mathcal{N}_T}(x)= \{p\}$. The case of $x\in Y_c\setminus Y$ can be done similarly.


2. \emph{Non-proximal case}.
Now we are going to construct a non totally transitive, non-proximal, transitive compact system by modifying the above construction.

Let $(X, T)$ be the non totally transitive, proximal, transitive compact system constructed as above.
We take a nontrivial minimal topologically mixing system
$(Z,R)$ (in particular, it is not proximal).
Then the product system $(X\times Z, T\times R)$ is a non totally transitive, non-proximal system.

Now let us show that the system $(X\times Z, T\times R)$ is transitive compact by proving $\omega_{\mathcal{N}_{T\times R}}(x, z)= \{p\}\times Z$ for each point $x\in X$ and $z\in Z$. Let $G_p$ be a neighborhood of the point $p\in X$, $G_z$ be a neighborhood of the point $z\in Z$ and $U,V$ be opene
subsets of $X\times Z$.
It is easy to obtain from the construction that $N_{T\times R} (U, V)\supset \{k, k+ 2, \dots\}$ for some $k\in \mathbb{Z}_+$. As $N_T(x, G_p)$ is thickly syndetic by Lemma \ref{0205}, and $N_R (z, G_z)\cap \{k, k+2, \dots\}$ is a syndetic set by applying Lemma \ref{201509062214} to the minimal topologically mixing (and hence weakly mixing) system $(Z, R)$, we have
$$N_{T\times R} ((x, z), G_p\times G_z)\cap N_{T\times R} (U, V)= N_T (x, G_p)\cap N_R (z, G_z)\cap N_{T\times R} (U, V)\neq \varnothing.$$
 This shows $(p, z)\in \omega_{\mathcal{N}_{T\times R}}(x, z)$, and then $\omega_{\mathcal{N}_{T\times R}}(x, z)\supset \{p\}\times Z$ by applying Lemma \ref{0200} (recall that the system $(Z, R)$ is minimal). Moreover, observing $\omega_{\mathcal{N}_T} (x)= \{p\}$ one has $\omega_{\mathcal{N}_{T\times R}}(x, z)\subset \{p\}\times Z$, and hence finally $\omega_{\mathcal{N}_{T\times R}}(x, z)= \{p\}\times Z$.
\end{proof}


\section{Implications among transitive compactness\\
 and various stronger forms of sensitivity}

In this section we study the relationships among transitive compactness and other various stronger forms of sensitivity.
Observe that multi-sensitivity and thick sensitivity are equivalent for transitive systems \cite[Proposition 3.2]{HKZ}, we will show that
 multi-sensitivity is equivalent to both thick sensitivity and thickly syndetic sensitivity for M-systems, any minimal spatio-temporally chaotic system is thickly sensitive, and that any transitive compact system is Li-Yorke sensitive and furthermore multi-sensitive if it is not proximal.


Any cofinitely sensitive system is clearly thickly syndetically sensitive,
any thickly syndetically sensitive system is multi-sensitive because the intersection of finitely many thickly syndetically sets is also thickly syndetic, and any multi-sensitive system is thickly sensitive \cite{Subrahmonian2007} and \cite[Proposition 3.2]{HKZ}.

The following Figure 1 presents a comparison between stronger forms of sensitivity for general topological
dynamical systems.

\vskip 10pt

\begin{center}
\begin{tikzpicture}[
level 1/.style={sibling distance=40mm},
  edge from parent/.style={->, double},
  >=latex]


\begin{scope}[every node/.style={level 3}]
\node [xshift=-0pt,yshift=-20pt] (c11) {Thickly \\ syndetical sensitivity};
\node [xshift=-130pt,yshift=-20pt] (c1) {Cofinite sensitivity};
\node [xshift=130pt,yshift=-20pt] (c21) {Multi-sensitivity};
\node [below of = c11, xshift=0pt,yshift=-20pt] (c41) {Sensitivity};
\node [below of = c21, xshift=0pt, yshift=-20pt] (c31) {Thick sensitivity};
\end{scope}
\draw[->, double] (c1) -- (c11);
\draw[->, double] (c11) -- (c21);
\draw[->, double] (c31) -- (c41);
\draw[->, double] (c21) -- (c31);
\end{tikzpicture}

\vskip 10pt

Figure 1. General case.
\end{center}


In the following we will discuss the relationship between these various sensitivity when we require some strong transitivity over the systems.

\begin{prop} \label{1407082207}
If $(X, T)$ is a thickly sensitive M-system, then $(X, T)$ is thickly syndetically sensitive.
\end{prop}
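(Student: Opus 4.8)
The plan is to combine the sensitivity hypothesis with the recurrence furnished by the dense set of minimal points, after first passing to a more tractable form of sensitivity. Since an M-system is transitive, by \cite[Proposition 3.2]{HKZ} thick sensitivity is equivalent to multi-sensitivity, so I may fix a constant $\delta>0$ witnessing multi-sensitivity: $\bigcap_{i=1}^{k}S_T(U_i,\delta)\neq\varnothing$ for every finite family of opene sets. The target, thickly syndetic sensitivity, I would reformulate through the duality between the thickly syndetic and piecewise syndetic families: $S_T(U,\delta')$ is thickly syndetic exactly when $\bigcap_{j=0}^{N}\big(S_T(U,\delta')-j\big)$ is syndetic for every $N$. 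Thus the goal becomes producing, for each opene $U$ and each $N$, a syndetic set of positions $m$ at which a whole run $[m,m+N]$ lies in $S_T(U,\delta')$.

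A convenient first step is the elementary inclusion that converts sensitivity into return times: if $V_1,V_2$ are opene with $\dist(V_1,V_2)>\delta'$, then $S_T(U,\delta')\supseteq N_T(U,V_1)\cap N_T(U,V_2)$, because $T^nU$ meeting two $\delta'$-separated sets forces $\diam(T^nU)>\delta'$. Since thickly syndetic sets form a filter (finite intersections of thickly syndetic sets are again thickly syndetic), this lets the two halves of the M-system hypothesis play complementary roles: the dense minimal points supply syndeticity, via the uniform recurrence of minimal sets and the fact that minimal systems are topologically ergodic (both recorded in the preliminaries), while the multi-sensitivity supplies the long runs, i.e.\ thickness.

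The cleanest organization is a dichotomy. If $(X,T)$ is not minimal, then it is a non-minimal M-system, and \cite[Theorem 8]{liuheng} already gives thickly syndetic sensitivity with no further work (the thick sensitivity hypothesis is not even needed there). The substance is therefore the minimal case, where every point is minimal, so in particular each $N_T(U,V)$ is syndetic. There I would fix opene $U$, a minimal point $z\in U$ with a neighborhood $W$ satisfying $\overline W\subseteq U$, invoke the uniform recurrence $X=\bigcup_{i=0}^{L}T^{-i}W$ to obtain $L$-syndetic return windows, and then feed a finite cover of $X$ by small opene sets $O_1,\dots,O_r$ into multi-sensitivity. The reason for using multi-sensitivity rather than plain sensitivity is that, by the same transitive-alignment argument behind \cite[Proposition 3.2]{HKZ}, the common sensitive set $\bigcap_{l}S_T(O_l,\delta)$ is not merely nonempty but thick; combined with the correct-direction transport $\diam(T^{k}O_l)\le\diam(T^{\,k-i}U)$ valid whenever $T^{i}O_l\subseteq U$, this moves long runs of simultaneous sensitivity into long runs of $S_T(U,\delta')$, while the syndetic returns of $z$ are meant to pin these runs into a syndetic family of windows.

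The main obstacle is precisely this last alignment. Forward images contract, so a single long sensitive run cannot be translated naively to a syndetic set of positions, and one cannot transport sensitivity backward at all; the whole difficulty is to force the thick runs to occur at syndetically recurring places, which is exactly where the syndetic recurrence of the dense minimal points must be used rather than the bare thickness coming from sensitivity. I expect the delicate point to be choosing the cover and the neighborhoods so that the thick common-sensitivity set and the syndetic return windows can be intersected into a thickly syndetic set, in particular handling the non-uniformity of the shifts $\{i:T^iO_l\subseteq U\}$ across the infinitely many return times.
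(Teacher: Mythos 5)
Your scaffolding is largely sound: the equivalence of thick and multi-sensitivity for transitive systems, the inclusion $S_T(U,\delta')\supseteq N_T(U,V_1)\cap N_T(U,V_2)$ when $\dist(V_1,V_2)>\delta'$, the transport $S_T(V,\delta)-i\subseteq S_T(U,\delta)$ whenever $T^iV\subseteq U$, and the disposal of the non-minimal case via \cite[Theorem 8]{liuheng} are all correct. But the dichotomy buys you essentially nothing: a minimal system is itself an M-system (transitive with every point minimal), so the minimal case you are left with carries the entire content of the proposition, and the paper's proof makes no such case split.

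The genuine gap is exactly the step you flag at the end, and it is fatal to the outline as written: you propose to intersect the thick set of simultaneous sensitive times with syndetic return windows of a single minimal point $z$ to a single neighborhood $W$, but a thick set intersected with a syndetic set is merely nonempty --- it is neither syndetic nor thick --- so no thickly syndetic set can emerge this way; the positions of the thick runs furnished by multi-sensitivity are simply uncontrolled. The paper's resolution inverts the roles: recurrence is not used to \emph{locate} the thick runs, but to \emph{replicate one sensitive time syndetically}. Concretely, fix $k$ and use thickness once to obtain a single $n_0\ge k$ in $\bigcap_{i=0}^{k}S_T(T^{-i}U,\delta)$; shrink the witnessing pairs to opene sets $U_i,V_i\subset T^{-i}U$ with $d(T^{n_0}x',T^{n_0}y')>\delta$ for all $x'\in U_i$, $y'\in V_i$; then --- this is the idea your sketch lacks --- use the fact that for an M-system the product $(X^{2k+2},T^{(2k+2)})$ has a \emph{dense set of minimal points} to find one minimal point $(z_0,z_0',\dots,z_k,z_k')$ of the product inside $U_0\times V_0\times\dots\times U_k\times V_k$. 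By Gottschalk's characterization its return-time set $\mathcal{S}=\bigcap_{i=0}^{k}\bigl(N_T(z_i,U_i)\cap N_T(z_i',V_i)\bigr)$ is syndetic, and for every $m\in\mathcal{S}$ and $0\le i\le k$ the pair $T^iz_i,\,T^iz_i'\in U$ satisfies $d\bigl(T^{m+n_0-i}(T^iz_i),T^{m+n_0-i}(T^iz_i')\bigr)=d(T^{m+n_0}z_i,T^{m+n_0}z_i')>\delta$, so $S_T(U,\delta)$ contains the runs $\{m+n_0-k,\dots,m+n_0\}$ for all $m\in\mathcal{S}$ and is thickly syndetic. The simultaneity across the whole window $\{0,\dots,k\}$ is thus achieved by minimality in the product system, not by recurrence of a point of $X$ to one neighborhood; without this product-minimal-point device (or an equivalent), the "alignment" you correctly identify as the main obstacle remains unresolved.
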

\begin{proof}
Recall that if $(X, T)$ is an M-system, then $(X^k, T^{(k)})$, the product system of $k$ copies of $(X, T)$,
 contains a dense set of minimal points for any $k\in \mathbb{N}$. In fact we can say more. Let $x\in \Tran (X, T)$ and $U_1, \dots, U_k$ be opene subsets in $X$. There are
  $n_1, \dots, n_k\in \mathbb{N}$ such that $T^{n_1} x\in U_1, \dots, T^{n_k} x\in U_k$. Since $(X, T)$ is an M-system, there is a minimal
  point $x_0\in X$  sufficiently close to $x$ such that $T^{n_1} x_0\in U_1, \dots, T^{n_k} x_0\in U_k$.  Thus $U_1\times \dots\times U_k$
  contains the minimal point $(T^{n_1} x_0, \dots, T^{n_k} x_0)$ of $(X^k, T^{(k)})$.

Let $(X, T)$, which is thickly sensitive, have a sensitivity constant $\delta> 0$. Let $U$ be an opene subset in $X$ and $k\in \mathbb{N}$.
 Since  $S_T (U, \delta)$ is a thick set,
 $$\bigcap_{i= 0}^k S_T (T^{- i} U, \delta)\supset \{n\ge k: \{n- k,
 \dots, n- 1, n\}\subset S_T (U, \delta)\}.$$
Therefore there are a positive integer $n_0\in \bigcap_{i= 0}^k S_T (T^{- i} U, \delta)$, $n_0\ge k$, and
  $x_i, y_i\in T^{- i} U$ with $d (T^{n_0} x_i, T^{n_0} y_i)> \delta$,~ $i= 0, 1, \dots, k$.
  Moreover, we can choose opene subsets $U_i, V_i\subset T^{- i} U$ such that $d (T^{n_0} x_i', T^{n_0} y_i')> \delta$ for all
   $x_i'\in U_i$ and $y_i'\in V_i$,~ $i= 0, 1, \dots, k$.
   Again, since  $(X, T)$ is an M-system, the system $(X^{2 k+ 2}, T^{(2 k+ 2)})$ contains a
   dense set of minimal points, and there is a minimal point $(z_0, z_0', z_1, z_1', \dots, z_k, z_k')\in U_0\times
    V_0\times U_1\times V_1\times \dots\times U_k\times V_k$. Obviously,
$$\mathcal{S}= \bigcap_{i= 0}^k (N_T (z_i, U_i)\cap N_T (z_i', V_i))$$
is a syndetic set. From the construction we get that  $S_T (U, \delta)\supset \{m+ n_0 - i: m\in \mathcal{S}, i= 0, 1, \dots, k\}$,
 which is a thickly syndetic set.
\end{proof}

In particular, we have:

\begin{thm} \label{1407090448}
Thickly syndetical sensitivity, multi-sensitivity and thick sensitivity are all equivalent properties for an M-system.
\end{thm}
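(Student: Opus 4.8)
The plan is to establish the three-way equivalence by closing a cycle of implications among thickly syndetic sensitivity, multi-sensitivity and thick sensitivity, of which two directions already hold for \emph{arbitrary} dynamical systems and only one genuinely invokes the M-system hypothesis. Concretely, I would show
$$\text{thickly syndetic sensitivity} \Longrightarrow \text{multi-sensitivity} \Longrightarrow \text{thick sensitivity} \Longrightarrow \text{thickly syndetic sensitivity},$$
so that all three coincide.

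First I would dispose of the two general implications, which were already recorded in the discussion preceding the statement. If $(X,T)$ is thickly syndetically sensitive with constant $\delta>0$, then for any finite collection of opene sets $U_1,\dots,U_k$ each $S_T(U_i,\delta)$ is thickly syndetic, and since the intersection of finitely many thickly syndetic sets is again thickly syndetic (in particular nonempty), $\bigcap_{i=1}^k S_T(U_i,\delta)\neq\varnothing$; hence the system is multi-sensitive with the same $\delta$. The implication from multi-sensitivity to thick sensitivity holds for every system by \cite[Proposition 3.2]{HKZ} (and \cite{Subrahmonian2007}), so I would simply cite it rather than reproduce the argument. Neither of these two steps uses transitivity or density of minimal points.

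The only substantive direction, and the one where the M-system assumption is essential, is thick sensitivity $\Rightarrow$ thickly syndetic sensitivity. This is exactly the content of Proposition \ref{1407082207}, which I may assume: its proof uses that for an M-system the product system $(X^{m},T^{(m)})$ again has a dense set of minimal points, so that from a thick block of sensitivity times $\{n_0-k,\dots,n_0\}$ one can manufacture opene boxes $U_i,V_i$ on which separation persists and then intersect a minimal return-time set (necessarily syndetic) with the shifted thick block to produce a thickly syndetic witness of sensitivity. Thus the hard work is entirely packaged in Proposition \ref{1407082207}, and the present theorem follows by concatenating it with the two general implications above.

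Accordingly, the proof I would write is short: quote the two universal implications, invoke Proposition \ref{1407082207} for the third, and observe that the resulting cycle forces the three properties to be equivalent for every M-system. The main (and indeed only) obstacle is the thick $\Rightarrow$ thickly syndetic step, which is precisely why it was isolated as Proposition \ref{1407082207}; once that is granted, no further estimates are needed.
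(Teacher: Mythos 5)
Your proposal is correct and is essentially the paper's own argument: the paper states Theorem \ref{1407090448} as an immediate consequence ("In particular, we have:") of Proposition \ref{1407082207} (thick sensitivity $\Rightarrow$ thickly syndetic sensitivity for M-systems), combined with the two general implications you cite — thickly syndetic sensitivity $\Rightarrow$ multi-sensitivity via closure of thickly syndetic sets under finite intersection, and multi-sensitivity $\Rightarrow$ thick sensitivity via \cite[Proposition 3.2]{HKZ} and \cite{Subrahmonian2007}. Your cycle of implications and your attribution of where the M-system hypothesis enters match the paper exactly.
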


The following Figure 2 presents a comparison between stronger forms of sensitivity for $M$-systems. Remark that any non-minimal M-system is thickly syndetically sensitive \cite[Theorem 8]{liuheng}.

\vskip 10pt

\begin{center}
\begin{tikzpicture}
[
  level 1/.style={sibling distance=70mm},
  edge from parent/.style={->, double},
  >=latex]

\begin{scope}[every node/.style={level 2}]
 \node[xshift=0pt,yshift=-5pt] (c1) {Cofinite sensitivity};

\begin{scope}[every node/.style={level 3}]
\node [below of = c1, xshift=0pt,yshift=-20pt] (c11) {Thickly \\ syndetical sensitivity};
\node [below of = c1, xshift=-125pt,yshift=-20pt] (c21) {Multi-sensitivity};
\node [below of = c1, xshift=125pt,yshift=-20pt] (c31) {Thick sensitivity};
\node [below of = c11, xshift=0pt,yshift=-20pt] (c41) {Sensitivity};

\end{scope}
\end{scope}
\draw[->, double] (c1) -- (c11);
\draw[<->, double] (c11) -- (c21);
\draw[->, double] (c11) -- (c41);
\draw[<->, double] (c11) -- (c31);

\end{tikzpicture}
\vskip 10pt

Figure 2. M-systems.

\vskip 10pt
\end{center}

The dichotomy theorem for minimal thickly sensitive systems, \cite[Theorem 3.1]{HKZ}, states that a minimal system is not thickly sensitive if and only if it is an almost one-to-one extension of its maximal equicontinuous factor.
Because any transitive non-sensitive system is uniformly rigid \cite[Lemma 1.2]{GlasnerWeiss1993} and hence has zero topological entropy \cite[Proposition 6.3]{GlasnerMaon1989}, then: any Toeplitz flow with positive topological entropy is a minimal sensitive invertible system which is not thickly sensitive (see
\cite{Downarowicz2005}
 for a detailed construction of such a system).


When discussing minimal systems, we have the following result:

\begin{prop} \label{201507052153}
Let $(X, T)$ be a minimal system. If $(X, T)$ is spatio-temporally chaotic then it is thickly sensitive.
\end{prop}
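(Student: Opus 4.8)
The plan is to deduce the statement from the dichotomy theorem \cite[Theorem 3.1]{HKZ} recalled just above, which asserts that a minimal system fails to be thickly sensitive precisely when it is an almost one-to-one extension of its maximal equicontinuous factor. I would therefore argue by contradiction: assuming $(X,T)$ is \emph{not} thickly sensitive, there is a factor map $\pi\colon (X,T)\to (Y,S)$ onto the maximal equicontinuous factor together with a dense set $Y_0\subset Y$ whose fibers $\pi^{-1}(y)$, $y\in Y_0$, are singletons. The goal is then to exhibit a single point at which spatio-temporal chaos must fail, contradicting the hypothesis.

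The key observation is that the unique point $x_0$ of a singleton fiber $\pi^{-1}(y_0)$, $y_0\in Y_0$, has trivial proximal cell $\Prox_T(x_0)=\{x_0\}$. To see this I would first record that $(Y,S)$, being minimal and equicontinuous, is distal, so it carries no nontrivial proximal pair. Next, since $\pi$ is uniformly continuous on the compact space $X$, any proximal pair $(x_0,x')$ in $X$ projects to a proximal pair $(\pi x_0,\pi x')$ in $Y$: along a sequence realizing $\liminf_n d(T^n x_0,T^n x')=0$ the images satisfy $d(S^n\pi x_0,S^n\pi x')\to 0$. By distality of $Y$ this forces $\pi x'=\pi x_0=y_0$, whence $x'\in\pi^{-1}(y_0)=\{x_0\}$ and $x'=x_0$. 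Because $Y_0$ is dense it is in particular nonempty, so at least one such point $x_0$ exists.

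Now a Li-Yorke pair is by definition proximal but nonconstant, so $x_0$ admits no Li-Yorke partner whatsoever. Choosing any neighborhood $U_{x_0}$ of $x_0$, spatio-temporal chaos would produce a point $y\in U_{x_0}$ with $y\neq x_0$ forming a Li-Yorke pair with $x_0$, which is impossible. This contradiction shows the assumption was untenable, and hence $(X,T)$ is thickly sensitive.

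The step I expect to be the main obstacle is the clean justification that the maximal equicontinuous factor is distal and that proximality passes to it: one must be sure that \emph{minimal equicontinuous} really yields distality (for instance via the enveloping semigroup being a group, which also guarantees that $S$ is a homeomorphism and the fibers are well behaved) and that the $\liminf$ condition defining proximality survives the uniformly continuous map $\pi$. Should one wish to bypass the dichotomy theorem, an alternative but more laborious route would be to extract a uniform separation constant from one fixed Li-Yorke pair and to transport it across the space using minimality so as to produce arbitrarily long runs of separation times; I expect this direct argument to be considerably more delicate, which is why I would favor the dichotomy-based proof.
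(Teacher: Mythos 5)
Your proposal is correct and follows essentially the same route as the paper: apply the dichotomy theorem \cite[Theorem 3.1]{HKZ} to the minimal system, extract a point $x_0$ with singleton fiber over the maximal equicontinuous factor, and observe that such a point admits no Li-Yorke partner, contradicting spatio-temporal chaos. The only difference is that you spell out the step the paper labels ``easy to see'' (proximality passes to the distal equicontinuous factor, forcing any proximal partner of $x_0$ into the singleton fiber), which is a correct and standard justification.
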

\begin{proof}
Let $\pi_\text{eq}: (X, T)\rightarrow (X_\text{eq}, T_\text{eq})$ be the associated factor map from $(X, T)$ to its maximal equicontinuous factor.
Applying the dichotomy theorem \cite[Theorem 3.1]{HKZ} to the minimal system $(X, T)$ we obtain that $(X, T)$
is not thickly sensitive if and only if $\pi_\text{eq}: X\rightarrow X_\text{eq}$ is almost one-to-one. Assume the contrary that $(X, T)$ is not thickly sensitive. Then we can take $x\in X$ such that $\pi^{- 1} (\pi x)$ is a singleton, from which it is easy to see that $(x, z)$ can not be a Li-Yorke pair for any $z\in X\setminus \{x\}$. Therefore $(X, T)$ is not spatio-temporally chaotic, a contradiction.
\end{proof}

Remark that in \cite{YeYu} Ye and Yu introduced and discussed block sensitivity and strong sensitivity for several families. Applying the dichotomy theorems obtained by them in \cite{YeYu} to the same proof of Proposition \ref{201507052153} one has that any minimal spatio-temporally chaotic system is strongly IP-sensitive.


Note that the converse of Proposition \ref{201507052153} does not hold. Precisely, there are invertible minimal cofinitely sensitive systems which are distal (and hence not spatio-temporally chaotic): many standard examples provided in \cite{Furstenberg1981} are such systems.
For example, we consider a dynamical system $(X, T)$ given by
$X= \mathbb{R}^2/\mathbb{Z}^2\ \text{ and }\ T: (x, y)\mapsto (x+ \alpha, x+ y)$ for $\alpha\notin \mathbb{Q}$.
It is well known that $(X, T)$ is an invertible minimal distal system (see \cite[Chapter 1]{Furstenberg1981}).
Now for any opene $U\subset X$ take $x_0, y_0\in \mathbb{R}/\mathbb{Z}$ and $\delta> 0$
with $(x_0- \delta, x_0+ \delta)\times \{y_0\}\subset U$. Since
 $T^n (x, y)= \left(x+ n \alpha, n x+ \dfrac{n (n- 1)}{2} \alpha+ y\right)$
 for any point $(x, y)\in X$ and any $n\in \mathbb{N}$,
 the diameter of $T^n U$ is at least the length of the circle $\mathbb{R}/ \mathbb{Z}$ when $n\in \mathbb{N}$ is
 large enough. That is, the system $(X, T)$ is cofinitely sensitive.


Also note that, as shown by the system constructed in \cite[Example 3.7]{HKZ}, the assumption of minimality in Proposition \ref{201507052153} can not be relaxed.
In \cite[Example 3.7]{HKZ} a transitive non-minimal sensitive system is constructed such that 1) it is not thickly sensitive; and
2)
it contains a fixed point as its unique minimal set, and hence the system is Li-Yorke sensitive by \cite[Corollary 3.7]{AK}.



It is shown that each minimal weakly mixing system is thickly syndetically sensitive \cite[Theorem 7]{liuheng},
which can extended a little bit as follows.
Observe that the proofs of \cite[Lemma 3 and Theorem 8]{liuheng} show that any topologically ergodic system containing two different minimal subsets is
 thickly syndetically sensitive, and
remark that we have assumed that all dynamical systems interested are nontrivial.

\begin{prop} \label{1407082217}
If a system $(X,T)$ is thickly syndetically transitive, then $(X,T)$ is thickly syndetically sensitive.
\end{prop}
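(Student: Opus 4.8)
The plan is to produce a single sensitivity constant $\delta>0$ that works for every opene set, by showing that $S_T(U,\delta)$ contains a thickly syndetic set for each opene $U$. Since $X$ is nontrivial (and in particular has no isolated points), I first fix two distinct points $a,b\in X$, set $\delta=\tfrac13 d(a,b)>0$, and introduce the opene balls $W_1=B(a,\delta)$ and $W_2=B(b,\delta)$. The triangle inequality then forces $d(u,v)>\delta$ whenever $u\in W_1$ and $v\in W_2$, so any two points of $U$ whose orbits simultaneously land in $W_1$ and in $W_2$ at a common time $n$ witness sensitivity at scale $\delta$.

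Next I fix an arbitrary opene $U\subset X$ and record the key containment $N_T(U,W_1)\cap N_T(U,W_2)\subseteq S_T(U,\delta)$. Indeed, if $n$ lies in the left-hand side, then there are $x_1,x_2\in U$ with $T^nx_1\in W_1$ and $T^nx_2\in W_2$, whence $d(T^nx_1,T^nx_2)>\delta$ and thus $n\in S_T(U,\delta)$. Because $(X,T)$ is thickly syndetically transitive, the very definition gives that both $N_T(U,W_1)$ and $N_T(U,W_2)$ are thickly syndetic. Using that the thickly syndetic family is stable under finite intersections (the fact already quoted in the introduction when passing from thickly syndetic sensitivity to multi-sensitivity), their intersection is thickly syndetic; and since the thickly syndetic family is hereditary upward, the larger set $S_T(U,\delta)$ is thickly syndetic as well. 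As $U$ was arbitrary and $\delta$ does not depend on $U$, this establishes thickly syndetic sensitivity with constant $\delta$.

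The argument is essentially a direct containment, so I do not anticipate a serious obstacle; the only points requiring care are the two elementary closure facts about the thickly syndetic family. Preservation under finite intersection is the one invoked earlier in the paper, while upward heredity holds because enlarging a set can only add, never remove, the starting positions of length-$N$ runs, so those positions form a superset of a syndetic set and are therefore syndetic. I would remark, finally, that the same proof with ``thickly syndetic'' replaced by ``thick'' throughout recovers the weaker implication ``weakly mixing $\Rightarrow$ thick sensitivity'', which places this proposition coherently alongside the implications recorded in Figure~1.
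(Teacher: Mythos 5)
Your proposal is correct and is essentially the paper's own proof: the paper likewise fixes two opene sets $V_1,V_2$ at positive distance $\delta=\dist(V_1,V_2)$, notes that thickly syndetic transitivity makes $N_T(U,V_1)$ and $N_T(U,V_2)$ thickly syndetic, and concludes via the containment $S_T(U,\delta)\supset N_T(U,V_1)\cap N_T(U,V_2)$ together with closure of the thickly syndetic family under finite intersection. Your only (cosmetic) difference is realizing the two target sets as metric balls $B(a,\delta)$, $B(b,\delta)$ around fixed points, which forces the slightly more careful triangle-inequality bookkeeping but changes nothing of substance.
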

\begin{proof}
We take opene $V_1, V_2\subset X$ with $\delta= \dist (V_1, V_2)> 0$.
Now let $U\subset X$ be an opene subset. By the assumption, both $N_T (U, V_1)$ and $N_T (U, V_2)$ are thickly syndetic, and hence
$S_T (U, \delta)\supset N_T (U, V_1)\cap N_T (U, V_2)$ is also thickly syndetic.
\end{proof}


 The following results show sensitivity in a transitive compact system.

\begin{thm} \label{0208}
Any transitive compact system $(X, T)$ is Li-Yorke sensitive.
\end{thm}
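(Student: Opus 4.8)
The plan is to produce a single sensitivity constant and, for each $x$, to obtain the required partner $y$ by intersecting two residual subsets of $X$: the proximal cell $\Prox_T(x)$, which is a dense $G_\delta$ set by Proposition \ref{0207}, and a set $S_x$ of points whose orbit separates from that of $x$ by a definite amount infinitely often. Since the system is nontrivial, $D:=\diam(X)>0$; fix $a,b\in X$ with $d(a,b)=D$ and put $\delta:=D/4$. Define
$$S_x=\{y\in X:\ \limsup_{n\to\infty} d(T^nx,T^ny)\ge \tfrac{D}{3}\}.$$
This set is $G_\delta$ because each $\{y:d(T^nx,T^ny)>c\}$ is open, and a standard rewriting of $\{\limsup\ge D/3\}$ as a countable intersection of countable unions of such open sets exhibits the $G_\delta$ structure. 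Once density of $S_x$ is established, Baire's theorem gives that $\Prox_T(x)\cap S_x$ is dense $G_\delta$, hence meets any neighborhood $U_x$; any $y$ there is proximal to $x$ (so $\liminf_n d(T^nx,T^ny)=0$) and satisfies $\limsup_n d(T^nx,T^ny)\ge D/3>\delta$, which is exactly Li-Yorke sensitivity with constant $\delta$, uniformly in $x$.

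The heart of the argument is the density of $S_x$, and this is where transitive compactness, rather than mere transitivity, is essential. Fix an arbitrary opene set $W$; I must find $y\in W$ whose orbit stays a definite distance from that of $x$ at infinitely many times. Using transitive compactness pick $z\in\omega_{\mathcal{N}_T}(x)$, which is nonempty. Since $\max(d(z,a),d(z,b))\ge D/2$, there is $v\in\{a,b\}$ with $d(z,v)\ge D/2$. Choose a neighborhood $G_z\subset B(z,\rho)$ of $z$ and a ball $B_v=B(v,r)$, with $\rho,r>0$ small enough that $D/2-\rho-r>D/3$. The point of selecting $z\in\omega_{\mathcal{N}_T}(x)$ is that, by Lemma \ref{02000}, the set $N_T(x,G_z)\cap N_T(U,V)$ is \emph{infinite} for all opene $U,V$; this forces the times at which $T^nx$ is near $z$ (hence far from $v$) to coincide with times at which $W$ returns into $B_v$.

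Concretely I construct nested opene sets $W=W_0\supset\overline{W_1}\supset W_1\supset\cdots$ and times $n_1<n_2<\cdots$ by induction. At stage $k$, apply Lemma \ref{02000} with $U=W_{k-1}$ and $V=B_v$ to get some $n_k>n_{k-1}$ with $T^{n_k}x\in G_z$ and $T^{n_k}W_{k-1}\cap B_v\ne\varnothing$; continuity of $T^{n_k}$ then yields an opene $W_k$ with $\overline{W_k}\subset W_{k-1}$ and $T^{n_k}W_k\subset B_v$. For any $y\in\bigcap_k\overline{W_k}$, which is nonempty by compactness, we have $y\in W$, and for every $k$ both $T^{n_k}x\in G_z$ and $T^{n_k}y\in B_v$ hold, whence
$$d(T^{n_k}x,T^{n_k}y)\ge d(z,v)-\rho-r> \tfrac{D}{3}.$$
Thus $\limsup_n d(T^nx,T^ny)\ge D/3$, so $y\in S_x$ and $S_x\cap W\ne\varnothing$. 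Hence $S_x$ is dense, and the proof is completed by the Baire-category step above.

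The main obstacle is exactly this alignment of separation times with return times: transitivity alone gives that $N_T(W,B_v)$ is infinite but says nothing about whether $T^nx$ is simultaneously far from $v$. It is overcome by exploiting $\omega_{\mathcal{N}_T}(x)\neq\varnothing$ together with the infinitude asserted in Lemma \ref{02000}; the remaining ingredients are the triangle inequality, continuity, and Baire category, all routine.
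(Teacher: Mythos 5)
Your proof is correct, and it takes a recognizably different route from the paper's. The paper reduces Li-Yorke sensitivity to plain sensitivity: by Proposition \ref{0207} every proximal cell is dense, so by \cite[Theorem 3.6]{AK} it suffices to exhibit a sensitivity constant, which the paper does with a one-shot argument --- for $x$ and $U_x$ it picks $y\in\omega_{\mathcal{N}_T}(x)$, a point $x_1$ with $d(x_1,y)\ge \diam(X)/2$, and a \emph{single} time $m\in N_T(x,B_y)\cap N_T(U_x,B_{x_1})$, whose existence needs only the definition of $\omega_{\mathcal{N}_T}(x)$. You instead prove Li-Yorke sensitivity directly, in effect inlining the Baire-category content of the cited Akin--Kolyada theorem: you show the separation set $S_x$ is a dense $G_\delta$ via a nested-closure construction, and for that you genuinely need the infinitude assertion of Lemma \ref{02000} (infinitely many times $n_k$ at which $T^{n_k}x\in G_z$ while $W_{k-1}$ returns into $B_v$), not merely nonemptiness, since the same point $y$ must separate from the orbit of $x$ at infinitely many times. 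Both arguments share the same anchor --- an $\omega_{\mathcal{N}_T}$-limit point aligning the moments when $T^nx$ is localized near $z$ with the moments when the test open set maps near a diametrically far point $v$, followed by the triangle inequality. What the paper's route buys is brevity, the ``infinitely often plus proximality'' bookkeeping being delegated to the external theorem; what your route buys is self-containedness and a formally stronger conclusion: for every $x\in X$ the set of points $y$ with $\liminf_{n\to\infty}d(T^nx,T^ny)=0$ and $\limsup_{n\to\infty}d(T^nx,T^ny)\ge \diam(X)/3$ is residual in $X$, not merely nonempty in each neighborhood of $x$.
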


\begin{proof}
As for every $x\in X$ the proximal cell $\Prox _T (x)\subset X$ is a dense subset by Proposition \ref{0207}, then by \cite[Theorem 3.6]{AK} it suffices to show that $(X,T)$ is sensitive.

As we have assumed that we are only interested in nontrivial dynamical systems. Let $\dfrac{\diam(X)}{4} > \epsilon > 0$ and define $\delta =\dfrac{\diam(X)}{2}-2\epsilon> 0$. We will show that $\delta$ is a sensitive constant of $(X, T)$.
Let $x\in X$ with a neighborhood $U_x$. Let $y \in \omega_{\mathcal{N}_T}(x)$ and take a point $x_1\in X$ with $d(x_1, y)\ge \dfrac{\diam(X)}{2}$. Now consider opene subsets $B_y\ni y$ and $B_{x_1}\ni x_1$ such that their diameters is strictly smaller than $\epsilon$. As $y \in \omega_{\mathcal{N}_T}(x)$,
we may find an integer $m\in \mathbb{Z}_+$ and a point $z\in U_x$ such that $T^mx \in B_y$ and  $T^m (z)\in B_{x_1}$, and then $d (T^m z, T^m x)\geq d(y, x_1)- d(T^m x, y)-d(T^m z,x_1)> \delta.$
 \end{proof}

  In particular, any transitive compact system is sensitive and hence $\mathbb{L}_{r}> 0$ and $\overline{\mathbb{L}}_{r}>0$. In fact, this point can be strengthened as follows, which was essentially proved in \cite{Kolyada2013}, although there it was done for weakly mixing systems.

\begin{prop} \label{0209}
Let $(X,T)$ be a transitive compact system. Then $\mathbb{L}_{r}=\overline{\mathbb{L}}_{r}> 0$.
\end{prop}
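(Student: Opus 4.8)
The plan is to prove the two inequalities $\overline{\mathbb{L}}_{r}\le \mathbb{L}_{r}$ and $\mathbb{L}_{r}\le \overline{\mathbb{L}}_{r}$ separately, together with positivity. The first inequality and positivity are immediate. If $\limsup_{n\to\infty} d(T^n x, T^n y) > \delta$ then $d(T^n x, T^n y) > \delta$ for some $n$, so every $\delta$ admissible in the definition of $\overline{\mathbb{L}}_{r}$ is admissible in that of $\mathbb{L}_{r}$, whence $\overline{\mathbb{L}}_{r}\le \mathbb{L}_{r}$; and Theorem \ref{0208} exhibits a sensitive constant arbitrarily close to $\diam(X)/2$, so $\mathbb{L}_{r}\ge \diam(X)/2 > 0$. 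Thus once $\mathbb{L}_{r}\le \overline{\mathbb{L}}_{r}$ is established, both numbers coincide and are positive.

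The heart of the matter is a Baire category upgrade from a single separation time to a limsup separation, and the transitive compact structure (through Lemma \ref{02000}) is exactly what makes it run. I would fix $x\in X$, an open neighborhood $U_x$, and reals $\delta''' < \delta'' < \mathbb{L}_{r}$, and show that for every opene $V\subseteq U_x$ and every $N\in\mathbb{N}$ the set $E_N = \{\, y\in U_x : d(T^n x, T^n y) > \delta''' \text{ for some } n\ge N \,\}$ is open and dense in $U_x$. Openness is clear from continuity. For density, choose $y^*\in \omega_{\mathcal{N}_T}(x)$ (nonempty by transitive compactness). Since $\delta'' < \mathbb{L}_{r}$, applying the defining property of $\mathbb{L}_{r}$ to the point $y^*$ produces $w\in X$ near $y^*$ and $k\in\mathbb{Z}_{+}$ with $d(T^k y^*, T^k w) > \delta''$. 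By continuity I would then select opene sets $P\ni y^*$ and $Q\ni w$ and balls $B_a\ni T^k y^*$, $B_b\ni T^k w$ with $\dist(B_a, B_b) > \delta'''$, $T^k P\subseteq B_a$ and $T^k Q\subseteq B_b$.

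Now Lemma \ref{02000} applies with $y^*\in \omega_{\mathcal{N}_T}(x)$, the neighborhood $P$ of $y^*$, and the opene sets $V, Q$: the set $N_T(x, P)\cap N_T(V, Q)$ is infinite, so I may pick $m\ge N$ in it. Then $T^m x\in P$ and $T^m z\in Q$ for some $z\in V$, hence $T^{m+k} x\in B_a$ and $T^{m+k} z\in B_b$, giving $d(T^n x, T^n z) > \delta'''$ for $n = m+k\ge N$. This places $z\in E_N\cap V$, so $E_N$ is dense. As $U_x$ is an open subspace of a compact metric space it is a Baire space, so $\bigcap_N E_N$ is a dense $G_\delta$; any $y$ in it satisfies $d(T^n x, T^n y) > \delta'''$ for infinitely many $n$, whence $\limsup_{n\to\infty} d(T^n x, T^n y)\ge \delta'''$. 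Letting $\delta''', \delta'' \uparrow \mathbb{L}_{r}$ and recalling that $x, U_x$ were arbitrary yields $\overline{\mathbb{L}}_{r}\ge \mathbb{L}_{r}$, completing the equality.

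The step I expect to be most delicate is the density of $E_N$ from an \emph{arbitrary} opene $V$ that need not contain the center $x$: this is precisely what rules out a naive use of the definition of $\mathbb{L}_{r}$ (which separates a point from its own neighbors) and forces the appeal to transitive compactness. The role of Lemma \ref{02000} is to synchronize the two orbit demands — that the orbit of the fixed center $x$ return into $P$ near $y^*$, and that some point of $V$ land in $Q$ near $w$ — at a common, arbitrarily large time; the subsequent transport by $T^k$ reproduces the \emph{full} separation $\delta''$ rather than merely the $\diam(X)/2$ bound of Theorem \ref{0208}, and it is this that makes the two Lyapunov numbers coincide exactly instead of only both exceeding $\diam(X)/2$.
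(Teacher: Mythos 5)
Your argument is correct. For comparison: the paper contains no proof of Proposition \ref{0209} at all --- it only remarks that the statement ``was essentially proved'' in \cite{Kolyada2013}, where the argument is carried out for weakly mixing systems, and it leaves the adaptation to transitive compact systems implicit. Your write-up is exactly that missing adaptation, executed in full: the trivial inequality $\overline{\mathbb{L}}_{r}\le \mathbb{L}_{r}$, positivity of $\mathbb{L}_{r}$ from the proof of Theorem \ref{0208}, and then the Baire-category upgrade in which Lemma \ref{02000} takes over the role played by the weak mixing (Furstenberg intersection) property in \cite{Kolyada2013}: it synchronizes, at a common time $m\ge N$, the return of the orbit of $x$ to a neighborhood $P$ of $y^*\in \omega_{\mathcal{N}_T}(x)$ with a visit of the test set $V$ to $Q$, and composing with $T^k$ transports the full separation $d(T^k y^*, T^k w)>\delta''$ into a separation of $T^{m+k}x$ from $T^{m+k}z$ with $z\in V$. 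All the steps check: $E_N$ is open by continuity and dense by the above, $U_x$ is a Baire space as an open subspace of a compact metric space, and the slack between the non-strict conclusion $\limsup_{n\to\infty} d(T^n x, T^n y)\ge \delta'''$ and the strict inequality required in the definition of $\overline{\mathbb{L}}_{r}$ is absorbed by letting $\delta'''<\delta''$ tend to $\mathbb{L}_{r}$, which you do. The only cosmetic slip is attributing the bound $\mathbb{L}_{r}\ge \diam(X)/2$ to the \emph{statement} of Theorem \ref{0208}, which asserts only Li-Yorke sensitivity; it is its proof (or the paper's subsequent remark that transitive compactness implies sensitivity, whence $\mathbb{L}_{r}>0$) that yields this, and positivity is all you actually use. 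A small bonus of your route is that it produces a residual set of witnesses $y$ in each $U_x$, slightly more than the proposition asks for.
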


Moreover, the proof of Theorem \ref{0208} shows that it takes place $\mathbb{L}_d\ge \dfrac{\diam(X)}{2}$ for any transitive compact system $(X, T)$, while this estimate is exact as shown by the proximal, non weakly mixing, transitive compact system $(X, T)$ constructed in Theorem \ref{0206}: it is not hard to show $\mathbb{L}_d = \dfrac{\diam(X)}{2}$ for the system $(X, T)$.


Recall that any transitive compact system is transitive, and thick sensitivity is equivalent to multi-sensitivity for transitive systems \cite[Proposition 3.2]{HKZ}.

 \begin{thm} \label{0206a}
Any non-proximal, transitive compact system is thickly sensitive.
\end{thm}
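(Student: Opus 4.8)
The plan is to establish thick sensitivity directly, i.e. to produce a single $\delta>0$ such that, for every opene $U\subset X$, the set $S_T(U,\delta)$ contains arbitrarily long runs of consecutive integers. Since a transitive compact system is transitive, by \cite[Proposition 3.2]{HKZ} thick sensitivity coincides with multi-sensitivity, so this is exactly the desired conclusion. (The weakly mixing case is easy, as weak mixing already forces multi-sensitivity; the substance of the argument is the non-weakly-mixing situation, where the limit sets $\omega_{\mathcal{N}_T}(x)$ are all nowhere dense by Corollary \ref{201509031402}(1) and Proposition \ref{0203}.) The engine will be Lemma \ref{02000} combined with non-proximality.

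First I extract a \emph{persistently separated} configuration. Fix $x\in X$; by Lemma \ref{0200} the set $\omega_{\mathcal{N}_T}(x)$ is nonempty, closed and positively $T$-invariant, so by Zorn's Lemma it contains a minimal set $M$, and I pick $\xi\in M$. Since $(X,T)$ is not proximal it has at least two distinct minimal points. I then find $c\in X$ and $\rho>0$ with $d(T^{j}\xi,T^{j}c)\ge 2\rho$ for all $j\ge 0$: if some minimal set $M'\ne M$ exists, then $M\cap M'=\varnothing$, so $2\rho:=\dist(M,M')>0$ and any $c\in M'$ works; if $M$ is the unique minimal set, it cannot be a single fixed point (otherwise the system would be proximal), so $(M,T|_M)$ is a nontrivial minimal, hence itself non-proximal, system, yielding a distal pair in $M$ which, after replacing $\xi$ by a suitable forward image (legitimate because $\omega_{\mathcal{N}_T}(x)$ is positively invariant), gives $\xi\in\omega_{\mathcal{N}_T}(x)$ and $c\in M$ with the required persistent separation.

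Now fix an opene $U$ and a run length $L$, and take $x\in U$ with $\xi\in\omega_{\mathcal{N}_T}(x)$ and $c$ as above. Using uniform continuity of the finite family $T,\dots,T^{L}$, choose $\epsilon>0$ so small that $d(u,v)<\epsilon$ forces $d(T^{j}u,T^{j}v)<\rho/2$ for all $0\le j\le L$. By Lemma \ref{02000}, applied with the neighborhood $B(\xi,\epsilon)$ of $\xi$ and the opene pair $U,\,B(c,\epsilon)$, the set $N_T\big(x,B(\xi,\epsilon)\big)\cap N_T\big(U,B(c,\epsilon)\big)$ is infinite; pick $n$ in it, so that $T^{n}x\in B(\xi,\epsilon)$ and some $z\in U$ has $T^{n}z\in B(c,\epsilon)$. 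Then for every $0\le j\le L$ one gets $T^{n+j}x\in B(T^{j}\xi,\rho/2)$ and $T^{n+j}z\in B(T^{j}c,\rho/2)$, whence $d(T^{n+j}x,T^{n+j}z)\ge d(T^{j}\xi,T^{j}c)-\rho\ge\rho$. Thus $\{n,n+1,\dots,n+L\}\subset S_T(U,\rho)$, and as $L$ was arbitrary, $S_T(U,\rho)$ is thick. Observe that this entire scheme collapses in the proximal case, where no distal pair exists, in agreement with the hypothesis being indispensable.

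The main obstacle is to make the constant \emph{uniform}. As written, the separation constant $\rho$ for $U$ depends on which minimal set the limit set $\omega_{\mathcal{N}_T}(x)$ happens to reach, and a priori this varies with $U$; moreover, since $\omega_{\mathcal{N}_T}(m)=\overline{\orb_T(m)}$ for a minimal point $m$ by Corollary \ref{201509031402}(2), no single minimal set lies in all the limit sets as soon as there are two minimal sets, so a naive ``common minimal set'' argument is unavailable, and the non-weakly-mixing hypothesis bars the product-transitivity synchronization. The crux is therefore to exhibit a persistently separated configuration with one endpoint reachable inside $\omega_{\mathcal{N}_T}(x)$ for \emph{every} $x$ and with separation bounded below independently of $U$ (for instance through a compactness argument over the space of minimal sets, or by fixing a pair of minimal sets the construction can always exploit). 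Once such a uniform $\rho>0$ is secured, the run argument above applies verbatim to every opene $U$, giving thick sensitivity, which transitivity upgrades to multi-sensitivity via \cite[Proposition 3.2]{HKZ}.
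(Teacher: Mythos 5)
Your per-$U$ argument is correct as far as it goes, and its core mechanism is in fact the same as the paper's: pick a minimal set $M\subset\omega_{\mathcal{N}_T}(x)$, use non-proximality to get a pair $\xi\in M$, $c\in X$ with $\inf_{j\ge 0}d(T^j\xi,T^jc)\ge 2\rho>0$ (your case analysis here is fine -- a nontrivial minimal subsystem is never proximal, so it contains such a pair, and all of $M$ lies in $\omega_{\mathcal{N}_T}(x)$), then combine Lemma \ref{02000} with uniform continuity of $T,\dots,T^L$ to produce runs $\{n,\dots,n+L\}\subset S_T(U,\rho)$. But this only yields: for every opene $U$ there is some $\rho_U>0$ with $S_T(U,\rho_U)$ thick, which is strictly weaker than thick sensitivity (the quantifiers are reversed). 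You flag this yourself as ``the crux,'' so the proposal contains a genuine, acknowledged gap; moreover, neither of your suggested repairs obviously works: distances between distinct minimal sets can be arbitrarily small when there are infinitely many of them, and, as you yourself observe via Corollary \ref{201509031402}(2), no single minimal set can lie in all the $\omega_{\mathcal{N}_T}$-limit sets once there are two minimal sets, so ``fixing a pair of minimal sets'' cannot serve every $x\in U$.

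The missing idea -- and exactly how the paper closes this gap -- is to anchor the construction at one \emph{transitive} point instead of inside each $U$. Fix $x\in\Tran(X,T)$ once and for all, take a minimal point $y\in\omega_{\mathcal{N}_T}(x)$ and a partner $z$ with $\inf_{i\ge0}d(T^iy,T^iz)>\delta$; this fixes $\delta$ independently of $U$. Given an arbitrary opene $U$, transitivity of $x$ supplies $n_{-1}\in\mathbb{Z}_+$ and an opene $U_0\ni x$ with $T^{n_{-1}}U_0\subset U$. The paper then runs the separation construction around $x$ inside $U_0$ (via nested sets $U_{k+1}\subset\overline{U_{k+1}}\subset U_k\cap T^{-n_k}(U_z^k)$, producing a single companion $x_0\in\bigcap_k U_k$ and a thick set of times $n$ with $d(T^nx,T^nx_0)>\delta$); every such time $n$ gives the time $n-n_{-1}$ at which the pair $T^{n_{-1}}x,\,T^{n_{-1}}x_0\in U$ is $\delta$-separated, so $S_T(U,\delta)$ is thick with the \emph{same} $\delta$ for every $U$. (Alternatively, your per-$U$ statement, applied with the point $x$ and its constant $\rho$ fixed \emph{before} shrinking $U$, shows $\mathrm{Eq}_{\mathrm{syn}}(X,T)=\varnothing$, and one could then conclude by the dichotomy \cite[Theorem 3.5]{HKZ} for transitive systems; but some uniformization step of this kind is indispensable and is absent from your proposal.)
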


\begin{proof}
Let $(X,T)$ be a non-proximal, transitive compact system (and hence it is transitive), and let $x \in \Tran (X, T)$.
As $(X,T)$ is transitive compact, $\omega_{\mathcal{N}_T}(x)$ is a nonempty, positively $T$-invariant, closed subset of $X$ by Lemma \ref{0200}, and then we may take a minimal point $y \in  \omega_{\mathcal{N}_T}(x)$.
 Since $(X,T)$ is non-proximal, there exists a point $z\in X$ such that $\liminf_{n \rightarrow \infty} d(T^n y, T^n z)> \delta$ for some $d (y, z)> \delta> 0$: if $y$ is a fixed point, then applying \cite{AK} to non-proximal $(X, T)$ we obtain another minimal point $z$ different from $y$; if $y$ is not a fixed point then we define $z= T y$.

Now let $U$ be any opene subset of $X$.
As $x \in \Tran (X, T)$, there exist an opene subset $U_0$ containing $x$ and an integer $n_{- 1}\in \mathbb{Z}_+$ with $T^{n_{- 1}} U_0\subset U$.
Now let $U_y^0$, $U_z^0$ be neighborhoods of $y, z$ respectively such that the distance between $U_y^0$ and $U_z^0$ is strictly larger than $\delta$.
As $(X,T)$ is transitive compact, by Lemma \ref{02000} we may take $n_0> n_{- 1}$ such that $T^{n_0}x \in U_y^0$ and
$T^{n_0}(U_0) \cap U_z^0 \neq \varnothing$, and then let $U_1$ be an opene set of $X$ with $U_1 \subset \overline{U_1} \subset U_0 \cap T^{-n_0}(U_z^0)$.
Now assume that by induction we have defined $U_k$, $k \geq 1$. Let
$U_y^k$, $U_z^k$ be neighborhoods of points $y, z$ respectively such that the distance between $T^{i}(U_y^k)$ and $T^{i}(U_z^k)$ is strictly larger than $\delta$ for all $0 \leq i \leq k$.
Applying again Lemma \ref{02000} to transitive compact $(X,T)$ we may take $n_k>n_{k-1}$ such that $T^{n_k}x \in U_y^k$ and $T^{n_k}(U_k) \cap U_z^k\neq \varnothing$, and then let $U_{k+1}$ be an opene set of $X$ with $U_{k+1} \subset \overline{U_{k+1}} \subset U_k \cap T^{-n_k}(U_z^k)$. Obviously,
$\bigcap_{k=1}^{\infty} \overline{U_k}=\bigcap_{k=1}^{\infty} U_k$ is nonempty and from it we take a point $x_0$. From the above construction, it is easy to obtain $S_T (U, \delta)\supset \{n- n_{- 1}: d(T^n x, T^n x_0)>\delta\}$ and
$$\{ n\in \mathbb{Z}_+: d(T^n x, T^n x_0)>\delta \}\supset \bigcup_{k\in \mathbb{N}} \{n_k, n_k+ 1, \dots, n_k+ k\},$$
in particular, $S_T (U, \delta)$
 is a thick set. Thus, $(X, T)$ is thickly sensitive.
\end{proof}

The following Figure 3 presents a comparison between stronger forms of sensitivity for transitive systems.
Remark that it is direct to see from the definition that each weakly mixing system is multi-sensitive.

\vskip 10pt

\begin{center}
\begin{tikzpicture}
[
  level 1/.style={sibling distance=40mm},
  edge from parent/.style={->, double},
  >=latex]

\node[root] (c0) {Topologically mixing}
  child {node[level 2, xshift=0pt,yshift=0pt] (c2) {Weak mixing}};
\begin{scope}[every node/.style={level 3}]
\node [below of = c0, xshift=-118pt,yshift=-60pt] (b11) {Thick sensitivity};
\node [below of = c0, xshift=0pt,yshift=-60pt] (d21) {Transitive compactness};
\node [below of = c0, xshift=118pt,yshift=-60pt] (c012) {Dense \\ proximal cells};
\node [below of = c0, xshift=118pt,yshift=-110pt] (c21) {$\mathbb{L}_{r}=\overline{\mathbb{L}}_{r} > 0$};
\node [below of = c0, xshift=-118pt,yshift=-110pt] (c011) {Multi-sensitivity};

\node [below of = c0, xshift=0pt,yshift=-110pt] (b21) {Li-Yorke sensitivity};
\node [below of = c0, xshift=0pt,yshift=-150pt] (c211) {Sensitivity};
\end{scope}
\draw[->, double] (c0) -- (c2);
\draw[->, double] (c2) -- (d21);
\draw[->, double] (d21) -- (c012);
\draw[->, double] (c2) -- (b11);
\draw[->, double] (d21) -- (c21);
\draw[->, double] (b21) -- (c211);
\draw[->, double] (c011) -- (c211);
\draw[->, double] (d21) -- (b21);
\draw[<->, double] (b11) -- (c011);
\end{tikzpicture}
\vskip 10pt

Figure 3. Topologically transitive systems.

\vskip 10pt
\end{center}





Remark that each transitive compact system is Li-Yorke sensitive, and that there are non-minimal M-systems which are not
transitive compact (for example the ToP system constructed by Downarowicz and Ye in \cite[Theorem 2]{Downarowicz-Ye}).

\section{Quantitative analysis for multi-sensitivity}

In this section, to measure the multi-sensitivity of a dynamical system, we are interested in relationships between the introduced $\mathbb{L}_{m,r},
\overline{\mathbb{L}}_{m,r}, \mathbb{L}_{m,d}$ and $\overline{\mathbb{L}}_{m, d}$.


It is easy to see that $(X, T)$ is multi-sensitive if and only if $\mathbb{L}_{m,d}> 0$, and
\begin{equation} \label{1407052335}
\mathbb{L}_{m,d}\ge \mathbb{L}_{m,r}\ge \overline{\mathbb{L}}_{m,r} \text{ and }
\mathbb{L}_{m,d}\ge \overline{\mathbb{L}}_{m,d} \ge \overline{\mathbb{L}}_{m,r}.
\end{equation}
Moreover, we have the following

\begin{lem} \label{1407052201}
$\mathbb{L}_{m,d}\le 2 \overline{\mathbb{L}}_{m,r}$.
\end{lem}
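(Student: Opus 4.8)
The plan is to establish the equivalent inequality $\overline{\mathbb{L}}_{m,r}\ge \tfrac12\mathbb{L}_{m,d}$. If $\mathbb{L}_{m,d}=0$ there is nothing to prove, so I fix any $\delta$ with $0<\delta<\mathbb{L}_{m,d}$. Since the defining condition of $\mathbb{L}_{m,d}$ is downward closed in $\delta$, for this $\delta$ I may use the property: for every finite family of opene sets $V_1,\dots,V_k$ there are $a_i,b_i\in V_i$ and a \emph{single} $n$ with $\min_i d(T^na_i,T^nb_i)>\delta$. I will then show that for every finite collection of points $x_1,\dots,x_k$ and neighborhoods $U_i\ni x_i$ there exist $y_i\in U_i$ with $\limsup_n \min_i d(T^nx_i,T^ny_i)\ge \delta/2$; this yields $\overline{\mathbb{L}}_{m,r}\ge \delta/2$, and letting $\delta\uparrow\mathbb{L}_{m,d}$ gives the lemma.

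The single tuple $(y_i)$ separating at infinitely many times is produced by a Baire category argument. I would work in the product $Y=\prod_{i=1}^k U_i$, which is completely metrizable (a finite product of open subsets of the compact metric space $X$) and hence a Baire space. For $N\in\mathbb{N}$ set
$$A_N=\Big\{(y_1,\dots,y_k)\in Y:\ \exists\,n\ge N,\ \min_{1\le i\le k} d(T^nx_i,T^ny_i)>\tfrac{\delta}{2}\Big\},$$
which is open in $Y$ since each $(y_i)\mapsto \min_i d(T^nx_i,T^ny_i)$ is continuous. Any point of $\bigcap_N A_N$ gives $y_i\in U_i$ with $\min_i d(T^nx_i,T^ny_i)>\delta/2$ for infinitely many $n$, hence $\limsup_n\min_i d(T^nx_i,T^ny_i)\ge\delta/2$, so it suffices to prove each $A_N$ is dense in $Y$.

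To prove density, fix a basic box $\prod_i W_i\subset Y$ with $W_i\subset U_i$ opene and fix $N$; here the two genuine ideas enter. First, to force the separation to occur at a \emph{late} time I shrink the box: by uniform continuity of $T^0,\dots,T^{N-1}$ on $X$ I choose opene $W_i'\subset W_i$ so small that $\diam(T^jW_i')<\delta$ for all $0\le j\le N-1$. Applying the $\mathbb{L}_{m,d}$-property to $W_1',\dots,W_k'$ gives $a_i,b_i\in W_i'$ and one $n$ with $\min_i d(T^na_i,T^nb_i)>\delta$; the choice of $W_1'$ rules out $n<N$ (otherwise $d(T^na_1,T^nb_1)\le\diam(T^nW_1')<\delta$), so $n\ge N$. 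Second, the factor $2$ comes from the triangle inequality: for each $i$, from $d(T^na_i,T^nb_i)>\delta$ at least one of $a_i,b_i$ lies at distance $>\delta/2$ from $T^nx_i$, and I let $y_i$ be that one. Then $(y_i)\in\prod_i W_i'\subset\prod_i W_i$, while $\min_i d(T^nx_i,T^ny_i)>\delta/2$ with $n\ge N$ shows $(y_i)\in A_N$. Thus $A_N$ meets the box, proving density and completing the argument.

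The routine ingredients (openness of $A_N$, the Baire property of $Y$, choice of $y_i$) I would not belabor. The step I expect to be the crux is forcing $n\ge N$ while staying inside the prescribed box: without the uniform-continuity shrinking, the $\mathbb{L}_{m,d}$-property supplies only \emph{some} separation time, which need not be large, and then the $A_N$ would fail to be dense and the $\limsup$ could not be controlled. Coordinating this shrinking simultaneously over the finitely many coordinates, so that the multi-sensitivity separation still applies to the single common time $n$, is the only place where the passage from the one-variable bound $\mathbb{L}_d\le 2\overline{\mathbb{L}}_r$ of \cite{Kolyada2013} requires real care.
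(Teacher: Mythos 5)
Your proof is correct, and it rests on exactly the two ideas that drive the paper's own proof: the triangle-inequality halving trick (from $d(T^n a_i, T^n b_i)>\delta$, at least one of $a_i,b_i$ must lie at distance $>\delta/2$ from $T^n x_i$) and the uniform-continuity shrinking of neighborhoods that forces the separation time supplied by the $\mathbb{L}_{m,d}$-property to be late. The only difference is organizational: the paper builds the tuple $(y_1,\dots,y_k)$ by an explicit recursion through nested closed neighborhoods $V_{m,i}$ (with $\diam(T^n V_{m,i})\le \epsilon/2$ for $n\le n_{m-1}$) and increasing times $n_0<n_1<\dots$, taking $y_i\in\bigcap_m V_{m,i}$, whereas you package that same recursion as a Baire-category argument via the open dense sets $A_N$ in $\prod_i U_i$ --- equivalent in substance (the nested-intersection step is Baire's theorem in disguise), with the minor bonus that your version exhibits the good tuples as a residual subset of $\prod_i U_i$.
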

\begin{proof}
We only need to consider the case $\mathbb{L}_{m,d}> 0$ following \cite[Proposition 2.1]{Kolyada2013}.

Let $\epsilon> 0$ be small enough with $\mathbb{L}_{m,d}> 2 \epsilon$. Now consider a collection of points $x_i$ ($i= 1, \dots, k$) with neighborhoods $U_i\ni x_i$. Set $V_{0, 1}= U_1, \dots, V_{0, k}= U_k$. Take
$$n_0\in \bigcap_{i= 1}^k S_T (V_{0, i}, \mathbb{L}_{m,d}- \frac{\epsilon}{2}).$$
Remark that for any $\delta> 0$ if $d (y_1, y_2)> 2 \delta$ then for all $x\in X$ either $d (y_1, x)> \delta$ or $d (y_2, x)> \delta$. And then there exist $y_{0, 1}\in V_{0, 1}, \dots, y_{0, k}\in V_{0, k}$ such that
\begin{equation} \label{1407060224}
\min_{1\le i\le k} d (T^{n_0} x_i, T^{n_0} y_{0, i})> \frac{\mathbb{L}_{m,d}- \epsilon}{2}.
\end{equation}
 Moreover, we can choose open neighborhoods $V_{1, 1}$ of $y_{0, 1}$ (with $\overline{V_{1, 1}}\subset V_{0, 1}$),
 $\dots$, $V_{1, k}$ of $y_{0, k}$ (with $\overline{V_{1, k}}\subset V_{0, k}$) such that
\begin{equation} \label{1407060225}
\max_{0\le n\le n_0} \max_{1\le i\le k} \diam (T^n V_{1, i})\le \frac{\epsilon}{2}.
\end{equation}
Again take
$$n_1\in \bigcap_{i= 1}^k S_T (V_{1, i}, \mathbb{L}_{m,d}- \frac{\epsilon}{2})$$
and hence $n_1> n_0$ by \eqref{1407060225}. We continue the process and define recursively (for each $m\ge 2$)
open neighborhoods $V_{m, 1}$ of some $y_{m- 1, 1}$ (with $\overline{V_{m, 1}}\subset V_{m- 1, 1}$), $\dots$,
 $V_{m, k}$ of some $y_{m- 1, k}$ (with $\overline{V_{m, k}}\subset V_{m- 1, k}$) and $n_m> n_{m -1}$ such that
\begin{equation} \label{1407060241}
\min_{1\le i\le k} d (T^{n_{m- 1}} x_i, T^{n_{m- 1}} y_{m- 1, i})> \frac{\mathbb{L}_{m,d}- \epsilon}{2}
\end{equation}
and
\begin{equation} \label{1407060245}
\max_{0\le n\le n_{m- 1}} \max_{1\le i\le k} \diam (T^n V_{m, i})\le \frac{\epsilon}{2},~ n_m\in
\bigcap_{i= 1}^k S_T (V_{m, i}, \mathbb{L}_{m,d}- \frac{\epsilon}{2}).
\end{equation}
Since by the construction, for each $i= 1, \dots, k$, $\bigcap_{m\ge 1} V_{m, i}\neq \varnothing $, we take
a point $y_i$ from the intersection (and so $y_i\in U_i$). Directly from \eqref{1407060241} and \eqref{1407060245} we have
$$\limsup_{n\rightarrow \infty} \min_{1\le i\le k} d (T^n x_i, T^n y_i)\ge \frac{\mathbb{L}_{m,d}}{2}- \epsilon.$$
Thus the conclusion follows from the arbitrariness of $\epsilon> 0$.
\end{proof}

As a consequence, we have:

\begin{prop} \label{multi-proposition}
The following conditions are
equivalent:
\begin{itemize}
\item[1.] $(X,T)$ is multi-sensitive.

\item[2.] There exists $\delta> 0$  such that
 for any finite collection $x_1, \dots, x_k$ of points in $X$ and
any system of open neighborhoods $U_i\ni x_i\ (i= 1, \dots, k)$; there exist points
 $y_i\in U_i$ and a nonnegative integer  $n$
with $ \min_{1\le i\le k} d (T^n x_i, T^n y_i)> \delta$.

\item[3.] There exists $\delta> 0$ such that
 for any finite collection $x_1, \dots, x_k$ of points in $X$ and
any system of open neighborhoods $U_i\ni x_i\ (i= 1, \dots, k)$; there exist points $y_i\in U_i$
with $\limsup_{n\rightarrow \infty}
\min_{1\le i\le k} d (T^n x_i, T^n y_i)> \delta$.

\item[4.] There exists $\delta> 0$ such that  for any finite collection $U_1, \dots, U_k$ of opene subsets of $X$,
   there exist $x_i, y_i\in U_i$ with \\
 $\limsup_{n\rightarrow \infty} \min_{1\le i\le k} d (T^n x_i, T^n y_i) > \delta$.
\end{itemize}
\end{prop}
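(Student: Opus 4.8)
The plan is to translate each of the four conditions into the positivity of one of the multi-sensitive Lyapunov numbers, and then to sandwich these numbers using the inequalities already recorded in \eqref{1407052335} together with Lemma \ref{1407052201}. First I would note that each of the conditions 1--4 is monotone in $\delta$: if the described separation property holds for some $\delta>0$, then it holds for every smaller positive value. Since each of $\mathbb{L}_{m,r},\overline{\mathbb{L}}_{m,r},\overline{\mathbb{L}}_{m,d}$ is by definition the supremum of the admissible $\delta$'s, this monotonicity makes the ``there exists $\delta>0$'' quantifier in conditions 2, 3, 4 equivalent to $\mathbb{L}_{m,r}>0$, $\overline{\mathbb{L}}_{m,r}>0$, and $\overline{\mathbb{L}}_{m,d}>0$ respectively; and condition 1 is equivalent to $\mathbb{L}_{m,d}>0$, as already observed. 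Thus the whole proposition reduces to the single statement that these four quantities are positive simultaneously.

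Next I would combine the chain \eqref{1407052335}, namely $\mathbb{L}_{m,d}\ge\mathbb{L}_{m,r}\ge\overline{\mathbb{L}}_{m,r}$ and $\mathbb{L}_{m,d}\ge\overline{\mathbb{L}}_{m,d}\ge\overline{\mathbb{L}}_{m,r}$, with the estimate $\mathbb{L}_{m,d}\le 2\overline{\mathbb{L}}_{m,r}$ furnished by Lemma \ref{1407052201}, to obtain the two-sided bounds
\[
\overline{\mathbb{L}}_{m,r}\le \mathbb{L}_{m,r}\le \mathbb{L}_{m,d}\le 2\overline{\mathbb{L}}_{m,r}
\quad\text{and}\quad
\overline{\mathbb{L}}_{m,r}\le \overline{\mathbb{L}}_{m,d}\le \mathbb{L}_{m,d}\le 2\overline{\mathbb{L}}_{m,r}.
\]
All four quantities therefore lie between $\overline{\mathbb{L}}_{m,r}$ and $2\overline{\mathbb{L}}_{m,r}$, so they are positive together or zero together: if any one of them is positive then $\mathbb{L}_{m,d}>0$ (since $\mathbb{L}_{m,d}$ dominates the other three), whence $\overline{\mathbb{L}}_{m,r}\ge\tfrac12\mathbb{L}_{m,d}>0$ and then each of the four exceeds $\overline{\mathbb{L}}_{m,r}>0$; conversely, $\overline{\mathbb{L}}_{m,r}=0$ forces $\mathbb{L}_{m,d}\le 2\overline{\mathbb{L}}_{m,r}=0$ and hence the vanishing of all of them. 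Reading this back through the correspondence of the first paragraph yields the equivalence of conditions 1--4.

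I do not expect a genuine obstacle at this stage, because the analytic content has already been absorbed into Lemma \ref{1407052201}, whose proof builds a nested sequence of shrinking neighborhoods to convert a density-type separation achieved at a single time $n$ into a $\limsup$-type separation realized along fixed points $y_i\in U_i$. The only point deserving a moment's care is the monotonicity-in-$\delta$ reduction of the first paragraph, which is what legitimizes replacing the ``there exists $\delta>0$'' formulations of conditions 1--4 by the positivity of the corresponding Lyapunov numbers; everything else is the purely order-theoretic sandwich above.
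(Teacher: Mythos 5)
Your proposal is correct and is essentially the paper's own argument: the paper states Proposition \ref{multi-proposition} as an immediate consequence of the chain \eqref{1407052335} together with Lemma \ref{1407052201}, which is exactly your identification of conditions 1--4 with the positivity of $\mathbb{L}_{m,d}$, $\mathbb{L}_{m,r}$, $\overline{\mathbb{L}}_{m,r}$, $\overline{\mathbb{L}}_{m,d}$ followed by the sandwich $\overline{\mathbb{L}}_{m,r}\le \mathbb{L}_{m,r},\,\overline{\mathbb{L}}_{m,d}\le \mathbb{L}_{m,d}\le 2\overline{\mathbb{L}}_{m,r}$. Your explicit treatment of the monotonicity-in-$\delta$ reduction is a detail the paper leaves tacit, but it is the same proof.
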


Before proceeding, we need:

\begin{lem} \label{1407061612}
Given a dynamical system $(X, T)$, let $\delta>0$, $k\in \mathbb{N}$ and $x_i\in X$ with a neighborhood $U_i$ for each $i= 1, \dots, k$. If
$\mathbb{L}_{m,r} > \delta$, then
\begin{equation*}
\mathcal{N}= \left\{n\in \mathbb{N}: \min_{1\le i\le k} d (T^n x_i, T^n y_i)> \delta\ \text{for some}\ y_1\in U_1,
 \dots, y_k\in U_k\right\}\in \mathcal{F}_{ip}.
\end{equation*}
\end{lem}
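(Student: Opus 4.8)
The plan is to verify the definition of an IP set directly: I will construct a sequence $(p_j)_{j\ge 1}\subset\mathbb{N}$ all of whose finite sums lie in $\mathcal{N}$. Fix once and for all a number $\delta'$ with $\delta<\delta'<\mathbb{L}_{m,r}$, so that the defining property of $\mathbb{L}_{m,r}$ is available at threshold $\delta'$: for every finite list of points with open neighbourhoods there is a \emph{single} nonnegative integer and perturbations inside those neighbourhoods realizing joint separation $>\delta'$. First I would record the elementary reformulation that, since the defining condition of $\mathcal{N}$ is a minimum over $i$ with the witnesses $y_1,\dots,y_k$ chosen independently, one has $n\in\mathcal{N}$ if and only if for each $i$ there is some $y\in U_i$ with $d(T^n x_i,T^n y)>\delta$. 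This lets me assemble separations coordinate by coordinate while still exploiting the joint strength (a single common time) of $\mathbb{L}_{m,r}$.

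For each finite nonempty $\alpha=\{i_1<\dots<i_r\}$ write $n_\alpha=p_{i_1}+\dots+p_{i_r}$ (and $n_\varnothing=0$). I would run a recursion with the invariant: after stage $m$ we have $p_1<\dots<p_m$ such that $n_\alpha\in\mathcal{N}$ for every nonempty $\alpha\subseteq\{1,\dots,m\}$. For the inductive step I must produce $p_{m+1}>p_m$ so that, writing any new nonempty subset of $\{1,\dots,m+1\}$ containing $m+1$ as $\gamma\cup\{m+1\}$ with $\gamma\subseteq\{1,\dots,m\}$, all shifted values $n_\gamma+p_{m+1}$ lie in $\mathcal{N}$. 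I would obtain one common time by applying $\mathbb{L}_{m,r}>\delta'$ to the enlarged finite collection of forward iterates $T^{n_\gamma}x_i$, over all $\gamma\subseteq\{1,\dots,m\}$ and $1\le i\le k$, equipped with small open neighbourhoods; this yields a single integer $p_{m+1}$ at which every $T^{n_\gamma}x_i$ is separated by more than $\delta'$ from a point of its neighbourhood. Monotonicity $p_{m+1}>p_m$ and positivity are arranged by first shrinking those neighbourhoods so that $\diam T^j(\cdot)<\delta'$ for all $0\le j\le p_m$, which forces the returned time to exceed $p_m$.

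The separations just produced are separations of the \emph{iterates} $T^{n_\gamma}x_i$ from nearby points; to conclude $n_\gamma+p_{m+1}\in\mathcal{N}$ I must upgrade each one to a separation witnessed by a genuine perturbation $y\in U_i$ of the original point, i.e.\ I need some $y\in U_i$ with $T^{n_\gamma}y$ lying in the open region where $T^{p_{m+1}}$ spreads points away from $T^{p_{m+1}}(T^{n_\gamma}x_i)$. I would engineer this by choosing, for each $\gamma$ and $i$, the neighbourhood of $T^{n_\gamma}x_i$ together with a domain neighbourhood $O\subseteq U_i$ of $x_i$ with $T^{n_\gamma}(O)$ inside it (possible since there are finitely many $\gamma$ and each $T^{n_\gamma}$ is continuous), and then pulling the open separation region back by $T^{-n_\gamma}$ and intersecting with $U_i$. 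Making this pulled-back set meet $U_i$, \emph{uniformly over all $2^m$ subsets $\gamma$ at the one common time $p_{m+1}$}, is the main obstacle: it is precisely here that the hypothesis must be used at all scales, since the defining inequality of $\mathbb{L}_{m,r}$ holds for arbitrarily small neighbourhoods and thus rules out the orbit of $U_i$ collapsing and forces realizable separation; a nested-neighbourhood bookkeeping in the spirit of the proof of Theorem~\ref{0206a} should secure a single domain witness for each pair $(\gamma,i)$. Once realizability is in hand, every new finite sum lies in $\mathcal{N}$, the induction runs, and the resulting sequence $(p_j)$ witnesses $\mathcal{N}\in\mathcal{F}_{ip}$; if convenient, Hindman's theorem can be invoked at the end to absorb any finite partition introduced by the bookkeeping.
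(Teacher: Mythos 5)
Your proposal has a genuine gap, and it sits exactly at the step you yourself flag as ``the main obstacle.'' Applying the definition of $\mathbb{L}_{m,r}$ to the forward iterates $T^{n_\gamma}x_i$ produces witnesses lying in open neighbourhoods of those iterates, but a witness $w$ near $T^{n_\gamma}x_i$ need not be of the form $T^{n_\gamma}y$ with $y\in U_i$, and that is what membership of $n_\gamma+p_{m+1}$ in $\mathcal{N}$ requires. The natural fix of feeding $T^{n_\gamma}(U_i)$ (or an open subset of it) into the definition as the neighbourhood of $T^{n_\gamma}x_i$ fails because $T$ is not assumed to be an open map: $T^{n_\gamma}(U_i)$ need not contain any open neighbourhood of $T^{n_\gamma}x_i$, so it is not admissible in the definition of $\mathbb{L}_{m,r}$. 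Likewise, the pulled-back separation region $T^{-n_\gamma}(\cdot)$ is nonempty (by surjectivity) but nothing forces it to meet $U_i$. Your two suggested rescues do not close this hole: the hypothesis ``$\mathbb{L}_{m,r}>\delta'$ at all scales'' only ever yields witnesses near whatever points you apply it to, and the nested-neighbourhood bookkeeping of Theorem \ref{0206a} is powered by transitive compactness (via Lemma \ref{02000}), which is what guarantees there that images of open sets meet prescribed targets --- no such mechanism is available here, since Lemma \ref{1407061612} is stated for an arbitrary system.

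The paper's proof resolves the obstacle by pulling back the \emph{points} instead of pushing forward the neighbourhoods. With $P=p_1+\dots+p_l$ and $\mathcal{A}_0=\{0\}\cup\mathcal{A}$, surjectivity of $T$ gives preimages $x_{s,i}$ with $T^{P-s}x_{s,i}=x_i$ for each $s\in\mathcal{A}_0$ and $1\le i\le k$; one then applies the definition of $\mathbb{L}_{m,r}$ to the finite collection $\{x_{s,i}\}$ with the neighbourhoods $T^{-(P-s)}U_i\ni x_{s,i}$, which are open by continuity. This yields a single time $q>P$ and witnesses $y_{s,i}\in T^{-(P-s)}U_i$ with $\min_{s,i}d(T^q x_{s,i},T^q y_{s,i})>\delta$. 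Pushing forward, $x'_{s,i}=T^{P-s}y_{s,i}$ lies in $U_i$ by construction, and
$d(T^{q}x_{s,i},T^{q}y_{s,i})=d\bigl(T^{(q-P)+s}x_i,T^{(q-P)+s}x'_{s,i}\bigr)$,
so $p_{l+1}+\mathcal{A}_0\subset\mathcal{N}$ with $p_{l+1}=q-P$. The single common time for the preimages automatically becomes the whole family of shifted times for the original points, with genuine witnesses in $U_i$ --- precisely the uniformity your forward construction cannot achieve. The rest of your skeleton (induction on finite sums, shrinking neighbourhoods so that the returned time exceeds $p_m$) is sound; the preimage trick is the missing idea, and with it Hindman's theorem is not needed at all.
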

\begin{proof}
By the assumption, $\mathcal{N}\neq \varnothing $ as $\delta< \mathbb{L}_{m,r}$. Now assume that
$$\mathcal{A}= \{p_{i_1}+ \dots+ p_{i_j}: 1\le i_1< \dots< i_j\le l\}\subset \mathcal{N}$$
for some $\{p_1, \dots, p_l\}\subset \mathbb{N}$ with $l\in \mathbb{N}$. We shall find $p_{l+ 1}\in \mathbb{N}$ such that
$p_{l+ 1}+ \mathcal{A}_0\subset \mathcal{N}$ with $\mathcal{A}_0= \{0\}\cup \mathcal{A}$,
and then obtain the conclusion by induction.

Take $x_{s, i}\in X$ with $T^{p_1+ \dots+ p_l- s} x_{s, i}= x_i$ for each $s\in \mathcal{A}_0$ and any $i= 1, \dots, k$.
 Since $\delta< \mathbb{L}_{m,r}$, obviously we can choose $q_l> p_1+ \dots+ p_l$ and $y_{s, i}\in T^{- (p_1+ \dots+ p_l- s)} U_i$
  for each $s\in \mathcal{A}_0$ and any $i= 1, \dots, k$ such that
\begin{equation} \label{1407061723}
\min_{s\in \mathcal{A}_0} \min_{1\le i\le k} d (T^{q_l} x_{s, i}, T^{q_l} y_{s, i})> \delta.
\end{equation}
Set $p_{l+ 1}= q_l- (p_1+ \dots+ p_l)\in \mathbb{N}$ and $x_{s, i}'= T^{p_1+ \dots+ p_l- s} y_{s, i}\in U_i$ for each
$s\in \mathcal{A}_0$ and any $i= 1, \dots, k$.  Then \eqref{1407061723} means equivalently
$$\min_{s\in \mathcal{A}_0} \min_{1\le i\le k} d (T^{p_{l+ 1}+ s} x_i, T^{p_{l+ 1}+ s} x_{s, i}')> \delta,$$
that is, $p_{l+ 1}+ \mathcal{A}_0\subset \mathcal{N}$, which finishes the proof.
\end{proof}

Then we have:

\begin{prop} \label{1407052219}
Let  $(X, T)$ be a system  with  a dense set of distal points. Then $\mathbb{L}_{m,r}= \overline{\mathbb{L}}_{m,r}$.
\end{prop}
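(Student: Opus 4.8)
The plan is to establish the nontrivial inequality $\mathbb{L}_{m,r}\le \overline{\mathbb{L}}_{m,r}$, since $\overline{\mathbb{L}}_{m,r}\le \mathbb{L}_{m,r}$ is already recorded in \eqref{1407052335}. If $\mathbb{L}_{m,r}=0$ there is nothing to prove, so I assume $\mathbb{L}_{m,r}>0$, fix $\delta'<\delta<\mathbb{L}_{m,r}$, and aim to show that $\delta'$ is an admissible value for $\overline{\mathbb{L}}_{m,r}$. Thus I fix a finite collection $x_1,\dots,x_k\in X$ with open neighborhoods $U_i\ni x_i$, and I must produce a \emph{single} tuple $y_i\in U_i$ with $\limsup_{n\to\infty}\min_{1\le i\le k} d(T^nx_i,T^ny_i)>\delta'$. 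Writing $\rho(\bar u,\bar v)=\min_{1\le i\le k} d(u_i,v_i)$ for $\bar u,\bar v\in X^k$, the entire difficulty lies in the gap between the two definitions: $\mathbb{L}_{m,r}$ only furnishes, at each eligible time, \emph{some} witnessing tuple, whereas $\overline{\mathbb{L}}_{m,r}$ demands \emph{one} tuple that keeps separating the orbits along times tending to infinity.

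The first ingredient is Lemma \ref{1407061612}: since $\mathbb{L}_{m,r}>\delta$, the set
\[
\mathcal{N}=\{n\in\mathbb{N}:\ \rho(T^n\bar x,T^n\bar y)>\delta\ \text{for some}\ \bar y\in U_1\times\dots\times U_k\}
\]
is an IP set, in particular nonempty, and the condition defining it is open in the witness $\bar y$. Because the distal points of $(X,T)$ are dense, their $k$-fold products are dense in $X^k$; hence inside the nonempty open set of witnesses at a fixed $n_0\in\mathcal{N}$ I can choose $\bar y=(y_1,\dots,y_k)$ to consist of distal points with $y_i\in U_i$ and $\rho(T^{n_0}\bar x,T^{n_0}\bar y)>\delta'$. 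By the product property of distal points recalled in Section 2, $\bar y$ is then a distal point of $(X^k,T^{(k)})$, so by \cite[Theorem 9.11]{Furstenberg1981} the return set $N_{T^{(k)}}(\bar y,V)$ is an IP$^*$ set for every neighborhood $V$ of $\bar y$. The two families $\mathcal{N}$ (IP) and $N_{T^{(k)}}(\bar y,V)$ (IP$^*$) are designed to be played against each other through the fact, stated in Section 2, that the intersection of an IP set with an IP$^*$ set is infinite.

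The crux, and the step I expect to be the main obstacle, is to upgrade the single separation time $n_0$ into infinitely many separation times for this one \emph{fixed} distal tuple $\bar y$. The clean situation is when $\bar x$ is itself recurrent: then a near-return of $\bar y$ (from the IP$^*$ set) occurring together with a near-return of $\bar x$ propagates, by continuity of $T^{n_0}$, the separation at $n_0$ to time $n_0+m$, and infinitely many common return times are available. For a general, possibly non-recurrent, $\bar x$ this carry-along argument breaks down, and I would instead realize the $\limsup$ at the $\omega$-limit: choosing a minimal, hence uniformly recurrent, point $\bar p\in\omega_{T^{(k)}}(\bar x)$ and a sequence $m_j\to\infty$ with $T^{m_j}\bar x\to\bar p$, it suffices to arrange that some accumulation point $\bar q$ of $(T^{m_j}\bar y)_j$ satisfies $\rho(\bar p,\bar q)>\delta'$, for then $\limsup_n\rho(T^n\bar x,T^n\bar y)\ge\rho(\bar p,\bar q)>\delta'$. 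Feeding the uniformly recurrent point $\bar p$ and its IP$^*$ return structure into the IP set $\mathcal{N}$, so as to force the orbit of the fixed distal witness $\bar y$ to stay $\rho$-far from $\bar p$ along a cofinal set of the times $m_j$, is the technical heart of the proof and the place where density of distal points is genuinely used.

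Granting this, the chosen $\bar y\in U_1\times\dots\times U_k$ satisfies $\limsup_{n\to\infty}\rho(T^n\bar x,T^n\bar y)>\delta'$ for the given collection; since the collection was arbitrary, $\delta'$ is admissible for $\overline{\mathbb{L}}_{m,r}$, and letting $\delta'\uparrow\delta\uparrow\mathbb{L}_{m,r}$ gives $\overline{\mathbb{L}}_{m,r}\ge\mathbb{L}_{m,r}$. Together with \eqref{1407052335} this yields $\mathbb{L}_{m,r}=\overline{\mathbb{L}}_{m,r}$, as desired.
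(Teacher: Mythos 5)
Your reduction, your appeal to Lemma \ref{1407061612}, and your choice of a distal witness tuple (density of distal points, the product property, and the IP$^*$ characterization of return times of distal points) all match the opening moves of the paper's proof. But there is a genuine gap exactly where you write ``Granting this'': the step that upgrades one separation time into infinitely many separation times for the \emph{fixed} tuple $\bar y$ is the entire content of the proposition, and you neither carry it out nor sketch a route that would work. Your fallback via a minimal point $\bar p\in\omega_{T^{(k)}}(\bar x)$ fails on two counts. First, a minimal (uniformly recurrent) point has \emph{syndetic} return sets, not IP$^*$ ones; IP$^*$ return times characterize \emph{distal} points by \cite[Theorem 9.11]{Furstenberg1981}, and $\bar p$ need not be distal, so the ``IP$^*$ return structure'' you want to feed into $\mathcal{N}$ is simply not available for $\bar p$. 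Second, passing to $\bar p$ severs the link with the separation established at time $n_0$: the times $m_j$ with $T^{m_j}\bar x\to\bar p$ carry no combinatorial structure that can be played against the return sets of $\bar y$, and nothing constrains where $(T^{m_j}\bar y)_j$ accumulates --- controlling that is precisely the difficulty, so the problem has only been restated, not solved.

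The paper closes this gap with Hindman's theorem, an ingredient absent from your proposal, and the point is that no recurrence of $\bar x$ is needed at all. Fix a finite open cover $\{V_1,\dots,V_p\}$ of $X$ by sets of diameter $<\delta$ and color each $n\in\mathcal{N}$ by the tuple $t=(t_1,\dots,t_k)$ with $T^nx_i\in V_{t_i}$, i.e.\ by the cell $\mathcal{N}_t$ containing $n$. Since $\mathcal{N}$ is an IP set, Hindman's theorem yields a color $t$ with $\mathcal{N}\cap\mathcal{N}_t$ still IP; writing this IP set with base $q_0$, one gets $q_0+\mathcal{T}\subset\mathcal{N}\cap\mathcal{N}_t$ for some IP set $\mathcal{T}$, and one chooses the distal witnesses $y_i\in U_i$ at the time $q_0$. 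Along $q_0+\mathcal{T}$ the orbit of $\bar x$ is essentially stationary: each $T^{q_0+r}x_i$ lies in the \emph{same} cell $V_{t_i}$ as $T^{q_0}x_i$, hence within $\delta$ of it. Meanwhile the IP$^*$ set $\mathcal{M}=\{n:\max_{1\le i\le k} d(T^{q_0}y_i,T^{q_0+n}y_i)<\delta\}$ meets $\mathcal{T}$ in an infinite set, and for $r\in\mathcal{M}\cap\mathcal{T}$ the triangle inequality converts the separation $>\mathbb{L}_{m,r}-\delta$ at time $q_0$ into a separation $>\mathbb{L}_{m,r}-3\delta$ at time $q_0+r$; letting $\delta\to 0$ finishes the proof. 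This pigeonhole-plus-Hindman device --- forcing the $x$-orbit to be nearly constant along an IP set anchored at the very time where the witness was chosen --- is the missing idea in your proposal.
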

\begin{proof}
By \eqref{1407052335} and Lemma \ref{1407052201} we have $2 \overline{\mathbb{L}}_{m,r}\ge \mathbb{L}_{m,r}\ge \overline{\mathbb{L}}_{m,r}$.
 Thus we only need prove $\mathbb{L}_{m,r}\le \overline{\mathbb{L}}_{m,r}$ in the case of $\mathbb{L}_{m,r}> 0$.

Let $\delta> 0$ be small enough with $\mathbb{L}_{m,r}> \delta$, and we take an open cover $\{V_1, \dots, V_p\}$ of $X$ with
$\max_{1\le i\le p} \diam (V_i)< \delta$. Now let $k\in \mathbb{N}$ and $x_i\in X$ with a neighborhood $U_i$
for each $i= 1, \dots, k$, and for each $s= (s_1, \dots, s_k)\in \{1, \dots, p\}^k$ we set
$$\mathcal{N}_s= \{n\in \mathbb{N}: T^n x_1\in V_{s_1}, \dots, T^n x_k\in V_{s_k}\}.$$
Observe from Lemma \ref{1407061612} that
\begin{equation*}
\mathcal{N}= \left\{n\in \mathbb{N}: \min_{1\le i\le k} d (T^n x_i, T^n y_i)> \mathbb{L}_{m,r}- \delta\ \text{for some}\ y_1\in U_1,
\dots, y_k\in U_k\right\}
\end{equation*}
is an IP set, and then $\mathcal{N}\cap \mathcal{N}_t$ is also an IP set for some $t\in \{1, \dots, p\}^k$,
 because $\mathcal{N}= \bigcup_{s\in \{1, \dots, p\}^k} (\mathcal{N}\cap \mathcal{N}_s)$. Choose
 $\{q_0, q_1, q_2, \dots\}\subset \mathbb{N}$ with $\{q_{i_1}+ \dots+ q_{i_j}:
 j\in \mathbb{N}\ \text{and}\ 0\le i_1< \dots< i_j\}\subset \mathcal{N}\cap \mathcal{N}_t$,
  and hence
  $q_0+ \mathcal{T}\subset \mathcal{N}\cap \mathcal{N}_t$ for some $\mathcal{T}\in \mathcal{F}_{ip}$.

Since $q_0\in \mathcal{N}$, there exists $y_i\in U_i$ for each $i= 1, \dots, k$ such that
\begin{equation} \label{1407061810}
\min_{1\le i\le k} d (T^{q_0} x_i, T^{q_0} y_i)> \mathbb{L}_{m,r}- \delta.
\end{equation}
Note that since the set of distal points is dense in $X$, we may assume that all points $y_1, \dots, y_k$ are
distal. Then all of $T^{q_0} y_1, \dots, T^{q_0} y_k$ (and hence $(T^{q_0} y_1, \dots, T^{q_0} y_k)$ in the product system $(X\times \dots\times X, T\times \dots\times T)$) are
also distal points. In particular,
$$\mathcal{M}= \left\{n\in \mathbb{N}: \max_{1\le i\le k} d (T^{q_0} y_i, T^{q_0+ n} y_i)< \delta\right\}$$
 is an IP$^*$ set \cite[Theorem 9.11]{Furstenberg1981}. Thus $\mathcal{M}\cap \mathcal{T}\neq \varnothing $, which is in fact an infinite set.
 Observing \eqref{1407061810}, it is easy to check from the construction that
 $d (T^{q_0+ r} x_i, T^{q_0+ r} y_i)> \mathbb{L}_{m,r}- 3 \delta$ for each $r\in \mathcal{M}\cap \mathcal{T}$
  and any $i= 1, \dots, k$ (as $r\in \mathcal{M}$ and $q_0, q_0+ r\in \mathcal{N}_t$). Then the conclusion follows
   from the arbitrariness of $\delta> 0$.
\end{proof}

We can not require $\mathbb{L}_{m,r}> 0$ under the assumption of
 Proposition \ref{1407052219} even for a minimal system with positive topological entropy, for example,
the above mentioned (in the previous section) Toeplitz flow with positive topological entropy.


When assume that the considered system is a transitive system, we have:

\begin{lem} \label{1407241808}
Let $(X, T)$ be a transitive system. Then $\mathbb{L}_{m,d}= \overline{\mathbb{L}}_{m,d}$.
\end{lem}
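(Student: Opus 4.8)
The plan is to prove the nontrivial inequality only. By \eqref{1407052335} we already have $\mathbb{L}_{m,d}\ge \overline{\mathbb{L}}_{m,d}$, so it remains to establish $\overline{\mathbb{L}}_{m,d}\ge \mathbb{L}_{m,d}$. If $\mathbb{L}_{m,d}=0$ there is nothing to do, so I would fix any $\delta$ with $0<\delta<\mathbb{L}_{m,d}$ and an arbitrary finite collection of opene sets $U_1,\dots,U_k$, and aim to produce \emph{fixed} points $x_i,y_i\in U_i$ together with a sequence $n_1<n_2<\cdots\to\infty$ such that $\min_{1\le i\le k} d(T^{n_m}x_i,T^{n_m}y_i)>\delta$ for every $m$; this gives $\limsup_{n\to\infty}\min_i d(T^nx_i,T^ny_i)\ge\delta$, and letting $\delta\uparrow\mathbb{L}_{m,d}$ over all collections yields $\overline{\mathbb{L}}_{m,d}\ge\mathbb{L}_{m,d}$.

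The engine would be a telescoping nested-neighborhood construction in the spirit of Lemma \ref{1407052201} and Proposition \ref{0209}. Two inputs are available. First, since $\mathbb{L}_{m,d}>\delta$, for any opene $V_1,\dots,V_k$ there is a \emph{common} time $n$ with $n\in\bigcap_i S_T(V_i,\delta)$; moreover, by shrinking each $V_i$ so that $\diam(T^jV_i)<\delta$ for all $j$ below any prescribed bound, such a common separation time can be forced to be arbitrarily large. Second, transitivity of $(X,T)$ (equivalently, $N_T(\cdot,\cdot)\ne\varnothing$, and the density of transitive points) will be used to \emph{recur}: I would arrange the current separating configuration to be revisited, so that a separation realized at one instant is reproduced along a sequence of times tending to infinity while the relevant neighborhoods can still be nested (closures inside the previous ones). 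Taking points in the resulting nonempty nested intersections then pins down $x_i,y_i\in U_i$ for which the separation persists at every stage.

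The genuine improvement over Lemma \ref{1407052201} — where one endpoint is held fixed and the ``$d(y_1,y_2)>2\delta$'' dichotomy costs a factor $\tfrac12$, giving only $\mathbb{L}_{m,d}\le 2\overline{\mathbb{L}}_{m,r}$ — is that here \emph{both} endpoints $x_i$ and $y_i$ are free to be chosen inside $U_i$. The role of transitivity is precisely to keep both endpoints ``alive'' across the stages of the telescoping while still guaranteeing recurrence, thereby recovering the full constant $\delta$ rather than $\delta/2$.

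I expect the main obstacle to be exactly this coordinated, two-sided recurrence. The separation furnished by $S_T$ is a within-set (diagonal) phenomenon occurring at a single instant, whereas $\overline{\mathbb{L}}_{m,d}$ demands an \emph{infinitely-often} separation of one fixed, generally off-diagonal pair, and does so \emph{simultaneously} in all $k$ coordinates. One cannot simply invoke recurrence in the product $(X^{2k},T^{(2k)})$, since that system need not be transitive (that would be weak mixing). The delicate point is therefore to use transitivity of $X$ itself to recur the whole separating configuration, and to control the nested neighborhoods on both the $x$-side and the $y$-side at once so that the intersections remain nonempty and the separation of the fixed pair is retained at infinitely many times; making this bookkeeping precise is the technical heart of the argument.
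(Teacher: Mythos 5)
There is a genuine gap. Your reduction (prove $\overline{\mathbb{L}}_{m,d}\ge \mathbb{L}_{m,d}$ when $\mathbb{L}_{m,d}>0$) and your diagnosis of the difficulty are both correct, but the proposal stops exactly where the proof has to begin: you never say \emph{how} transitivity of $X$ recurs a fixed off-diagonal configuration, and the engine you propose cannot do it. A telescoping construction in the spirit of Lemma \ref{1407052201} is fed by $S_T$-type (within-set) separations: each application of $\mathbb{L}_{m,d}>\delta$ hands you a separated pair lying inside a \emph{single} opene set, so to record that separation you must shrink neighborhoods around both points of that pair --- and both of those lie on one side of your configuration, so the nest on the other side is abandoned. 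This is precisely why Lemma \ref{1407052201} keeps one endpoint fixed and pays the factor $\tfrac12$; making the nesting two-sided is not "bookkeeping" but a structural obstruction, and nothing in your plan overcomes it. Deferring it as "the technical heart of the argument" leaves the statement unproved.

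The paper's proof resolves this with one device absent from your proposal: realize all $2k$ points as iterates of a \emph{single} transitive point. One application of the definition of $\mathbb{L}_{m,d}$ gives $n$ and $x_i',y_i'\in U_i$ with $\min_{1\le i\le k} d(T^n x_i',T^n y_i')>\delta$; shrinking gives opene $V_i\ni x_i'$, $W_i\ni y_i'$ inside $U_i$ with $\min_{1\le i\le k}\dist(T^n V_i,T^n W_i)>\delta$. Now pick $z\in \Tran(X,T)$ and $s_i,t_i$ with $T^{s_i}z\in V_i$, $T^{t_i}z\in W_i$, and set $x_i=T^{s_i}z$, $y_i=T^{t_i}z$. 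Whenever $T^m z$ is close enough to $z$ --- which happens for infinitely many $m$, since a transitive point of a system without isolated points returns near itself infinitely often --- continuity of the finitely many maps $T^{s_1},\dots,T^{s_k},T^{t_1},\dots,T^{t_k}$ forces $T^{s_i+m}z\in V_i$ and $T^{t_i+m}z\in W_i$ for every $i$, hence $\min_{1\le i\le k} d(T^{n+m}x_i,T^{n+m}y_i)>\delta$. Thus the self-recurrence of the one point $z$ recurs the entire $2k$-point configuration simultaneously: the "coordinated two-sided recurrence" you flagged as the obstacle comes for free (your worry about non-transitivity of $(X^{2k},T^{(2k)})$ is legitimate in general, but tuples of points on one orbit of a recurrent point are automatically recurrent in the product). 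No induction, no nested intersections, no loss of constant; letting $\delta\uparrow \mathbb{L}_{m,d}$ finishes the proof.
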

\begin{proof}
It suffices to show that $\mathbb{L}_{m,d}\le \overline{\mathbb{L}}_{m,d}$ in the case of $\mathbb{L}_{m,d}> 0$.
Let $k\in \mathbb{N}$ and take opene $U_1, \dots, U_k\subset X$. Let $\delta> 0$ be small enough with $\mathbb{L}_{m,d} > \delta$.
By the definition there exist $n\in \mathbb{N}$ and $x_i', y_i'\in U_i$ for each $i= 1, \dots,
k$ with $\min_{1\le i\le k} d (T^n x_i', T^n y_i')> \delta$. Then,
for each $i= 1, \dots, k$ we can find open $x_i'\in V_i\subset U_i$ and $y_i'\in W_i\subset U_i$ such
that both $\diam (T^n V_i)$ and $\diam (T^n W_i)$ are small enough,
thus $\min_{1\le i\le k} \dist (T^n V_i, T^n W_i)> \delta$.

Since $(X, T)$ is transitive, take $z\in \Tran (X, T)$ and then choose
$s_i, t_i\in \mathbb{N}$ with $T^{s_i} z\in V_i$ and $T^{t_i} z\in W_i$ for each $i= 1, \dots, k$.
Observe that since  $m$ belongs to $\mathbb{N}$ such that $T^m z$ is sufficiently close to $z$, we have  $T^{s_i+ m} z\in V_i$
and $T^{t_i+ m} z\in W_i$ for each $i= 1, \dots, k$, and hence
$\min_{1\le i\le k} d (T^{n+ s_i+ m} z, T^{n+ t_i+ m} z)> \delta$. Since $z\in \Tran (X, T)$,
clearly there are infinitely many $m_1< m_2< \dots$ in $\mathbb{N}$ such that each $T^{m_j} z$ is close enough
to $z$, and hence we obtain $\overline{\mathbb{L}}_{m,d}\ge \delta$ by taking $x_i= T^{s_i} z\in U_i$
and $y_i= T^{t_i} z\in U_i$ for each $i= 1, \dots, k$, finishing the proof.
\end{proof}

By the same proof of \cite[Theorem 4.1]{Kolyada2013} one has:

\begin{prop} \label{1407062303}
Let $(X, T)$ be a nontrivial weakly mixing system. Then $\mathbb{L}_{m,d}= \overline{\mathbb{L}}_{m,d}=\diam (X)$
 and $\mathbb{L}_{m,r}= \overline{\mathbb{L}}_{m,r}> 0$.
\end{prop}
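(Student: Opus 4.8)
The plan is to treat the two equalities separately, first pinning down the displacement-type numbers and then reducing the recurrence-type equality to a single-witness construction. For the displacement numbers, note that weak mixing implies transitivity, so Lemma \ref{1407241808} already gives $\mathbb{L}_{m,d}=\overline{\mathbb{L}}_{m,d}$, and it remains to compute this common value. The bound $\mathbb{L}_{m,d}\le \diam(X)$ is immediate since $d(T^nx_i,T^ny_i)\le \diam(X)$ for every $i$ and $n$. For the reverse bound I would fix $\delta<\diam(X)$ and opene sets $U_1,\dots,U_k$, choose $a,b\in X$ with $d(a,b)>\delta$ together with opene $A\ni a$, $B\ni b$ satisfying $\dist(A,B)>\delta$, and then apply transitivity of the product system $(X^{2k},T^{(2k)})$, which is weakly mixing by \cite{Furstenberg1967}, to the opene sets $U_1\times U_1\times\cdots\times U_k\times U_k$ and $A\times B\times\cdots\times A\times B$. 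A common return time $n$ produces $x_i\in U_i$ with $T^nx_i\in A$ and $y_i\in U_i$ with $T^ny_i\in B$, whence $\min_{1\le i\le k}d(T^nx_i,T^ny_i)\ge \dist(A,B)>\delta$; letting $\delta\uparrow\diam(X)$ gives $\mathbb{L}_{m,d}=\overline{\mathbb{L}}_{m,d}=\diam(X)$. Positivity of the recurrence numbers is then essentially free: by Lemma \ref{1407052201} we have $\overline{\mathbb{L}}_{m,r}\ge \tfrac12\mathbb{L}_{m,d}=\tfrac12\diam(X)>0$ (the system being nontrivial), and $\mathbb{L}_{m,r}\ge\overline{\mathbb{L}}_{m,r}$ by \eqref{1407052335}.

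There remains the equality $\mathbb{L}_{m,r}=\overline{\mathbb{L}}_{m,r}$, and since $\mathbb{L}_{m,r}\ge\overline{\mathbb{L}}_{m,r}$ always holds, the content is the reverse inequality. Following the scheme of \cite[Theorem 4.1]{Kolyada2013}, I would fix a finite collection $x_1,\dots,x_k$ with neighbourhoods $U_i\ni x_i$ and a $\delta<\mathbb{L}_{m,r}$, and aim to produce a single tuple $(y_1,\dots,y_k)\in U_1\times\cdots\times U_k$ with $\limsup_{n\to\infty}\min_{1\le i\le k} d(T^nx_i,T^ny_i)>\delta-\epsilon$. The key structural input is Lemma \ref{1407061612}: since $\delta<\mathbb{L}_{m,r}$, the set $\mathcal{N}=\{n:\ \min_i d(T^nx_i,T^ny_i)>\delta\text{ for some }y_i\in U_i\}$ lies in $\mathcal{F}_{ip}$, so in particular separation at level $\delta$ occurs at infinitely many times. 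From this I would run a recursive construction of nested opene boxes $\prod_i V^{(m)}_i\subset\prod_i U_i$ and times $n_m\to\infty$ so that every tuple in $\prod_i V^{(m)}_i$ is coordinatewise $(\delta-\epsilon)$-separated from $(x_1,\dots,x_k)$ at time $n_m$; any point $(y_1,\dots,y_k)$ in the (nonempty) intersection $\bigcap_m\prod_i V^{(m)}_i$ then realizes the desired $\limsup$, and letting $\delta\uparrow\mathbb{L}_{m,r}$, $\epsilon\downarrow0$ yields $\overline{\mathbb{L}}_{m,r}\ge\mathbb{L}_{m,r}$.

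The main obstacle is precisely the passage from \emph{separation at infinitely many times, with the witnessing tuple depending on the time} to \emph{separation along infinitely many times for one fixed tuple}: the $\limsup$ in $\overline{\mathbb{L}}_{m,r}$ demands a single $(y_1,\dots,y_k)$, whereas the IP-data of Lemma \ref{1407061612} a priori supplies a different witness at each time, and a naive compactness limit fails because $T^{n}$ does not preserve closeness as $n\to\infty$. In Proposition \ref{1407052219} this gap was bridged using density of distal points, whose return-time sets are IP$^*$ and therefore meet the IP set $\mathcal{N}$; here no distal points need be present (a nontrivial proximal topologically mixing system is weakly mixing, yet its only distal point is the fixed point), so the recursion above must instead be fed by weak mixing directly. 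Concretely, at each step one wants to guarantee a new separation time inside the current, non-centered box by combining the thickness of a product return-time set $N_{T^{(k)}}\bigl(\prod_i V^{(m)}_i,\ \prod_i B_i\bigr)$, available from \cite{Furstenberg1967}, with a sufficiently replete family of return times of the orbit of $(x_1,\dots,x_k)$ to a minimal point of its orbit closure in $X^k$. Verifying that these two families genuinely intersect along arbitrarily large times, uniformly as the boxes shrink, is the delicate point, and it is exactly here that the argument of \cite[Theorem 4.1]{Kolyada2013} is invoked, the multi-dimensional bookkeeping being a routine adaptation of the one-pair case.
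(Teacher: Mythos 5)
Your treatment of the displacement numbers and of positivity is correct and complete: weak mixing makes $(X^{2k},T^{(2k)})$ transitive, the common-return-time argument gives $\mathbb{L}_{m,d}\ge\delta$ for every $\delta<\diam(X)$, Lemma \ref{1407241808} handles $\overline{\mathbb{L}}_{m,d}$, and Lemma \ref{1407052201} together with \eqref{1407052335} gives $\mathbb{L}_{m,r}\ge\overline{\mathbb{L}}_{m,r}\ge\frac{1}{2}\diam(X)>0$. Since the paper itself proves nothing here beyond invoking the proof of \cite[Theorem 4.1]{Kolyada2013}, these parts of your write-up are, if anything, more self-contained than the original. The problem is the remaining equality $\mathbb{L}_{m,r}=\overline{\mathbb{L}}_{m,r}$: you do not prove it, you reduce it to a synchronization step that you hand back to \cite[Theorem 4.1]{Kolyada2013}, and the mechanism you propose for that step would in fact fail.

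Concretely, you propose to intersect the thick sets $N_{T^{(k)}}\bigl(\prod_i V^{(m)}_i,\prod_i B_i\bigr)$ with the return times of $(x_1,\dots,x_k)$ to neighborhoods of a minimal point of its orbit closure. For a non-minimal point this return-time family is in general only of IP type: by Auslander's theorem the point is proximal to some minimal point of its orbit closure, and the returns to neighborhoods of that minimal point form an IP set, not a syndetic set. A thick set must meet every syndetic set, but need not meet a given IP set (for instance, the finite-sums set of $\{10^i\}_{i\ge 1}$ is disjoint from the thick set $\bigcup_k[2\cdot 10^k,3\cdot 10^k]$). This is exactly why Proposition \ref{1407052219} needs dense \emph{distal} points, whose return times are IP$^*$ and therefore do meet IP sets, and why that proof cannot be imitated here, as you yourself observe. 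The device that actually closes the recursion (and which, as the remark before Proposition \ref{0209} indicates, is what \cite{Kolyada2013} runs on; compare the proof of Theorem \ref{0208}) is transitive compactness: $(X^k,T^{(k)})$ is weakly mixing, hence transitive compact, so there exists $z=(z_1,\dots,z_k)\in\omega_{\mathcal{N}_{T^{(k)}}}(x_1,\dots,x_k)$, and by Lemma \ref{02000} the set $N_{T^{(k)}}\bigl((x_1,\dots,x_k),G\bigr)\cap N_{T^{(k)}}(\mathbf{U},\mathbf{V})$ is \emph{infinite} for every neighborhood $G\ni z$ and all opene boxes $\mathbf{U},\mathbf{V}$. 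One then applies the definition of $\mathbb{L}_{m,r}>\eta$ at the auxiliary points $z_1,\dots,z_k$ rather than at $x_1,\dots,x_k$: this yields one time $n_z$ and witnesses $w_i$ near $z_i$ with $\min_i d(T^{n_z}z_i,T^{n_z}w_i)>\eta$; choosing small balls $A_i\ni T^{n_z}z_i$ and $B_i\ni T^{n_z}w_i$ with $\dist(A_i,B_i)>\eta-2\epsilon$ and feeding $G=\prod_i\bigl(G_{z_i}\cap T^{-n_z}A_i\bigr)$ and $\mathbf{V}=\prod_i\bigl(G_{z_i}\cap T^{-n_z}B_i\bigr)$ into Lemma \ref{02000}, every $m$ in the resulting infinite set produces $y_i\in V^{(m)}_i$ with $\min_i d(T^{m+n_z}x_i,T^{m+n_z}y_i)>\eta-2\epsilon$ at the arbitrarily large time $m+n_z$, after which your nested-boxes argument does go through. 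Without this (or an equivalent) synchronization the recursion cannot be completed, so as written the proposal leaves a genuine gap in the proof of $\mathbb{L}_{m,r}=\overline{\mathbb{L}}_{m,r}$.
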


\section{Further discusions}

In this section we explore more properties for multi-sensitive systems, and prove that any multi-sensitive system has positive topological sequence entropy.


As a direct corollary of \cite[Theorem 4.3]{HLSY} and the dichotomy theorems \cite[Theorem 3.1 and Proposition 3.2]{HKZ}, one has that any minimal multi-sensitive system has positive topological sequence entropy. In fact, the conclusion remains true if we remove the assumption of minimality.

\begin{prop} \label{1407091722}
Let $(X, T)$ be a multi-sensitive system. Then $(X,T)$ has positive topological sequence entropy.
\end{prop}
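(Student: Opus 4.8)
The plan is to deduce positive topological sequence entropy from the existence of a nondiagonal \emph{IN-pair} (independence pair), and to build such a pair out of multi-sensitivity by a branching construction combined with a Sauer--Shelah counting argument. Throughout, $\delta>0$ denotes a multi-sensitivity constant.

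\emph{Reduction.} By the independence characterization of positive topological sequence entropy (the sequence entropy pairs are exactly the nondiagonal IN-pairs, and the system has positive sequence entropy as soon as one such pair exists; see \cite{HLSY} and the surrounding theory of independence, which needs no minimality), it suffices to produce two points $x_1\ne x_2$ such that for all neighborhoods $U_1\ni x_1$ and $U_2\ni x_2$ the pair $(U_1,U_2)$ has arbitrarily large finite \emph{independence sets}, i.e. for each $N$ there are $t_1<\dots<t_N$ with $\bigcap_{j} T^{-t_j}U_{\sigma(j)}\ne\varnothing$ for every $\sigma\in\{1,2\}^{\{t_1,\dots,t_N\}}$. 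The reason this suffices is transparent: an independence set of length $N$ for a pair at distance $\rho$ produces, by choosing one point per pattern, $2^N$ orbit segments that are pairwise $\rho$-separated along the time set $\{t_1,\dots,t_N\}$, and this is exactly what forces exponential growth of $\sep(k,T,\rho)$ along a suitable sequence $\mathcal{N}$.

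\emph{Branching construction.} Fix a finite open cover $\mathcal{W}=\{W_1,\dots,W_p\}$ with mesh $<\delta/3$. By induction on $N$ I construct times $n_1<\dots<n_N$ and a full binary tree of opene sets $\{G_\omega\}_{\omega\in\{0,1\}^N}$ with $G_{\omega0},G_{\omega1}\subset G_\omega$, such that for every level $l\le N$ the set $T^{n_l}G_\omega$ lies in a single cover element $W_{c(\omega,l)}$. At the inductive step I apply multi-sensitivity to the $2^N$ leaf sets: this yields one common time $n_{N+1}$, which may be taken larger than $n_N$ because $\bigcap_\omega S_T(G_\omega,\delta)$ is infinite (were it a finite set $\{s_1,\dots,s_r\}$, shrinking each $G_\omega$ to an opene subset on which $T^{s_1},\dots,T^{s_r}$ each vary by less than $\delta$ would, by multi-sensitivity applied to the shrunken sets, force $\bigcap_\omega S_T(\cdot,\delta)=\varnothing$, a contradiction, exactly as in the proof of Lemma~\ref{02000}). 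For this $n_{N+1}$ every leaf $G_\omega$ contains two points whose $T^{n_{N+1}}$-images lie more than $\delta$ apart, hence in two distinct cover elements at distance $>\delta/3$; shrinking around these two points splits $G_\omega$ into its children. Since siblings are separated at their own branching level, the $2^N$ leaves carry $2^N$ \emph{distinct} words $\psi_\omega=(c(\omega,1),\dots,c(\omega,N))\in\{1,\dots,p\}^N$. Here the simultaneous spreading in all $2^N$ leaves is precisely the strength of multi-sensitivity over plain sensitivity.

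\emph{Counting and conclusion.} The combinatorial heart is to extract independence from these $2^N$ distinct $p$-ary words. By the Sauer--Perles--Shelah lemma in its multi-symbol (Karpovsky--Milman) form, a family of $2^N$ distinct words in $\{1,\dots,p\}^{[N]}$ \emph{shatters}, with respect to some color pair $(a,b)$, a subset $S\subset[N]$ with $|S|\ge\gamma N$ for some $\gamma=\gamma(p)>0$; that is, every element of $\{a,b\}^{S}$ equals $\psi_\omega|_S$ for some leaf $\omega$. Translating back through the tree, $\{n_l:l\in S\}$ is an independence set of length $\ge\gamma N$ for the pair $(W_a,W_b)$, which sits at distance $>\delta/3$. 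As there are only finitely many color pairs, pigeonholing over $N\to\infty$ fixes a single pair $(W_a,W_b)$ admitting independence sets of unbounded length; running this for covers of mesh tending to $0$ and passing to a limit of the corresponding shrinking cover elements produces points $x_1\ne x_2$ (at distance $\ge\delta/3$) every neighborhood pair of which inherits arbitrarily large finite independence sets, i.e. a nondiagonal IN-pair, completing the reduction. I expect the two genuine obstacles to be the quantitative Sauer--Shelah step---securing a \emph{linearly} large shattered set realized over a \emph{single} color pair---and the bookkeeping that lets $n_{N+1}$ be chosen common to all leaves and arbitrarily large; both are exactly where multi-sensitivity (rather than mere sensitivity, as used in the minimal case via \cite{HLSY} and the dichotomy theorems) is indispensable.
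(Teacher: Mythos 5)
Your branching construction is sound, and in fact up to the point where you have the tree $\{G_\omega\}$ and the times $n_1<\dots<n_N$ you have essentially reproduced the paper's own proof, which at that point simply stops: choosing one point $x_\omega\in G_\omega$ per leaf gives a $(N+1,T,\delta/3)$-separated set of cardinality $2^N$ with respect to the sequence $\{n_1<n_2<\dots\}$, because two distinct leaves $\omega\neq\omega'$ first diverge at some level $l$, and by your sibling-separation property $T^{n_l}G_\omega$ and $T^{n_l}G_{\omega'}$ lie in cover elements at distance $>\delta/3$. Since the construction is incremental (each new level extends the old tree and keeps the old times), this yields $h_{\mathcal{N}}(T)\ge\log 2$ directly, with no combinatorics and no appeal to independence; that is exactly the paper's argument (there with two fixed opene sets at distance $>\delta$ instead of a cover).

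The detour through IN-pairs, however, contains a genuine gap at the counting step. The multi-symbol Sauer--Shelah/Natarajan extraction (which is correct, via the Natarajan-dimension bound plus pigeonholing over color pairs) produces \emph{some} pair of distinct symbols $(a,b)$ and a linearly large $(a,b)$-shattered set; it does not produce a pair of your choosing. Nothing forces $\{a,b\}$ to be one of the sibling pairs realized in the tree: distinct elements of an open cover of mesh $<\delta/3$ may perfectly well intersect, and the only separated pairs your construction guarantees are the two cover elements attached to the two children of a single node at its own branching level. So your assertion that the extracted pair $(W_a,W_b)$ ``sits at distance $>\delta/3$'' is unjustified. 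If $W_a\cap W_b\neq\varnothing$, an independence set for $(W_a,W_b)$ carries no metric separation whatsoever: it gives no lower bound on $\sep(k,T,\rho)$, and the limit points $x_1,x_2$ produced at the end of your argument may coincide, yielding a diagonal IN-pair, which every system has. To salvage your route you would need a strengthened extraction lemma: for families generated by an edge-labelled trie (all first differences occurring along ``separated'' pairs of symbols), a linearly large shattered set can be found over a single \emph{separated} pair. That is not the statement you cite, and it is far from obvious. A secondary, fixable, gloss: even with a separated pair in hand, assembling independence sets of different sizes into one sequence $\mathcal{N}$ with $\limsup_k\frac{1}{k}\log\sep(k,T,\rho)>0$ requires taking blocks of geometrically growing size placed successively far to the right; with blocks of size $1,2,\dots,N$ the quotient tends to $0$. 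The simplest repair of the whole proof is to delete the combinatorial step and conclude from the tree itself, as above.
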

\begin{proof}
We are going to define an increasing sequence $\mathcal{N}= \{n_1 < n_2< \dots\}\subset \mathbb{N}$ and
a sequence of $(k+ 1,T,\epsilon)$-separated subsets of $X$ (with  respect
to $\mathcal{N}$) with cardinality $2^k, k= 1, 2, ...$ Then obviously we will have that $h_\mathcal{N} (T) \ge \log 2$.

Let $(X, T)$ be a multi-sensitive system with a sensitivity constant $2 \delta> 0$. Take  opene $U_{(1)}, U_{(2)}\subset X$ with $\dist (U_{(1)}, U_{(2)})> \delta$, and define
$V_{(1)}= T^{- n_1} U_{(1)}, V_{(2)}= T^{- n_1} U_{(2)}$ for an integer $n_1\in \mathbb{N}$. Obviously any two points $x_1\in V_{(1)}, x_2\in V_{(2)}$ are  $(2,T,\delta)$-separated.
Since $(X, T)$ is multi-sensitive with the sensitivity constant $2 \delta$, the Lyapunov number
 $\overline{\mathbb{L}}_{m,r}$ is not smaller than $\delta$ by Lemma \ref{1407052201} and hence
there exist four points $x_{(1, 1)}, x_{(1, 2)} \in V_{(1)}$ and $x_{(2, 1)}, x_{(2, 2)}\in  V_{(2)}$ and an integer $n_2 > n_1$ with
$\min_{i\in \{1, 2\}} d (T^{n_2} x_{(i, 1)}, T^{n_2} x_{(i, 2)})> \delta$.  Therefore they are  $(3,T,\delta)$-separated, as
 $\min_{i, j\in \{1, 2\}} d (T^{n_1} x_{(1, i)}, T^{n_1} x_{(2, j)})> \delta$ and $\min_{i\in \{1, 2\}} d (T^{n_2} x_{(i, 1)}, T^{n_2} x_{(i, 2)})> \delta$.
Take a small enough neighborhood $U_{(i, j)}$ of point $T^{n_2} x_{(i, j)}$
 such that $\min_{i\in \{1, 2\}} \dist (U_{(i, 1)}, U_{(i, 2)})> \delta$ and we define $V_{(i,j)}= T^{- n_2} U_{(i, j)}\cap V_{(i)}$ for  $i, j\in \{1, 2\}$.

Now assume that by induction we have defined the sequence of positive integers $n_1< \dots< n_k$ and $2^{k}$ points $x_s$ in open subsets $V_s, s\in \{1, 2\}^k$ such that
$$\min_{s, s'\in \{1, 2\}^k, s\neq s'} \max_{1\le q\le k} \dist (T^{n_q} V_s, T^{n_q} V_{s'}) > \delta,$$ and therefore
the set of points $\{x_s\in V_s, s\in \{1, 2\}^k\}$ is  $(k+ 1,T,\delta)$-separated.

Since $(X, T)$ is multi-sensitive, there exist $n_{k+ 1} > n_k$ and points $x_{(s_1, \dots, s_k, 1)} \in V_{(s_1, \dots, s_k)},~ x_{(s_1, \dots, s_k, 2)} \in V_{(s_1, \dots, s_k)}$
with
$ d (T^{n_{k+1}} x_{(s_1, \dots, s_k, 1)}, T^{n_{k+1}} x_{(s_1, \dots, s_k, 2)})> \delta$
 for any $s_1, \dots ,s_k \in \{1, 2\}^k$.
So, the set of all these $2^{k+1}$ points is  $(k+2,T,\delta)$-separated (with  respect
to $\mathcal{N}$), because by the induction hypothesis any two different points  $x_{(s_1, \dots, s_i, \dots , s_k, l)} \not
= x_{(s_1, \dots, s_i', \dots , s_k, l)}$ are also $(k+ 1,T,\delta)$-separated (with  respect to $\mathcal{N}$) for any
$l\in \{1, 2\}$.  This finishes the proof.
\end{proof}


  To link multi-sensitivity of a system with local
 equicontinuity of points in the system, in \cite{HKZ} the first, third and fourth authors of the present paper introduced the concept of
 syndetically equicontinuous points of a system.
 Recall that $\text{Eq}_{\text{syn}} (X, T)$ denotes the set of all syndetically equicontinuous points of $(X, T)$.
  Since a thick set has a nonempty intersection with a syndetic set, one has readily that
 if $(X, T)$ is
 thickly sensitive then $\text{Eq}_{\text{syn}} (X, T)= \varnothing$.
  By the dichotomy theorem \cite[Theorem 3.5]{HKZ}, one has that:
a transitive system is either multi-sensitive or contains syndetically equicontinuous points, and a minimal system is either multi-sensitive or its each point is syndetically equicontinuous.

\begin{prop} \label{1407182012}
Consider the following conditions for a dynamical system $(X, T)$:
\begin{enumerate}

\item $(X, T)$ is equicontinous.

\item For every $\epsilon> 0$ there exist a $\delta> 0$ and a syndetic subset $\mathcal{A}\subset \mathbb{N}$ such that, for any $x,y \in X$, $d (x, y)< \delta$ implies $d (T^n x, T^n y)<
\epsilon$ for all $n\in \mathcal{A}$.

\item $\text{Eq}_{\text{syn}} (X, T)= X$.

\item For every $\epsilon> 0$ there exists a $\delta> 0$ such that, for every $U\subset X$ with $\text{diam} (U)< \delta$ there exists a syndetic subset $\mathcal{A}\subset \mathbb{N}$ such that, $x,y \in U$ implies $d (T^n x, T^n y)<
\epsilon$ for all $n\in \mathcal{A}$.

\item For every $\epsilon> 0$ there exist a $\delta> 0$ and an $m\in \mathbb{N}$ such  that, for any $x,y\in X$,
$d (x, y)< \delta$ implies $\min_{0\le i\le m}d (T^{n+ i} x, T^{n+ i} y)<
\epsilon$ for all $n\in \mathbb{N}$.
\end{enumerate}
Then $(1)\Longleftrightarrow (2)\Longrightarrow (3)\Longleftrightarrow (4)\Longrightarrow (5)$.
\end{prop}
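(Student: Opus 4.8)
The plan is to establish the displayed chain one implication at a time, exploiting the pattern that each ``downward'' implication ($(1)\Rightarrow(2)$, $(2)\Rightarrow(3)$, $(3)\Rightarrow(4)$, $(4)\Rightarrow(5)$) merely weakens a uniformity, while each ``upward'' one ($(2)\Rightarrow(1)$, $(4)\Rightarrow(3)$) restores it. The only substantial step is $(2)\Rightarrow(1)$; all the others are short.

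I would first dispose of the easy directions. For $(1)\Rightarrow(2)$ one takes $\mathcal{A}=\mathbb{N}$, which is syndetic, so equicontinuity is exactly (2) with the full time set. For $(2)\Rightarrow(3)$, given $x\in X$ and $\epsilon>0$, apply (2) to obtain $\delta>0$ and a syndetic $\mathcal{A}$, then take the neighbourhood $U=B(x,\delta)$ together with $\mathcal{N}=\mathcal{A}$; this is verbatim the definition of $x$ being syndetically equicontinuous, so $\mathrm{Eq}_{\mathrm{syn}}(X,T)=X$. For $(4)\Rightarrow(3)$, given $x$ and $\epsilon$, feed (4) a small ball $U\ni x$ of diameter below the $\delta$ it supplies and then freeze one coordinate at $x$, recovering syndetic equicontinuity at $x$.

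The two ``upward'' compactness steps among the easy ones are $(3)\Rightarrow(4)$ and $(4)\Rightarrow(5)$. For $(3)\Rightarrow(4)$: for each $x$, condition (3) applied with $\epsilon/3$ yields an open $U_x\ni x$ and a syndetic $\mathcal{N}_x$; by compactness I extract a finite subcover $U_{x_1},\dots,U_{x_m}$ and let $\delta$ be its Lebesgue number. Any $U$ of diameter $<\delta$ lies in some $U_{x_i}$, and the triangle inequality through the orbit of $x_i$ gives $d(T^n y,T^n z)<\epsilon$ for $y,z\in U$ and $n\in\mathcal{N}_{x_i}$, so $\mathcal{A}=\mathcal{N}_{x_i}$ works. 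For $(4)\Rightarrow(5)$: cover $X$ by finitely many sets $U_1,\dots,U_r$ of diameter $<\delta$, let $m_l$ be a gap bound for the syndetic set $\mathcal{A}_l$ that (4) attaches to $U_l$, and set $m=\max_l m_l$ with $\delta'$ the Lebesgue number of $\{U_l\}$. For $d(x,y)<\delta'$ the pair lies in some $U_l$, and syndeticity of $\mathcal{A}_l$ with gap $\le m$ forces a good time in every window $\{n,\dots,n+m\}$, which is precisely (5). The common device is: finite subcover plus Lebesgue number to make $\delta$ uniform, and $\max$ of finitely many gap bounds to make $m$ uniform.

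The hard part will be $(2)\Rightarrow(1)$. My plan is to reduce it to the clean statement: \emph{if there is a single syndetic set $S$ for which the family $\{T^s:s\in S\}$ is equicontinuous, then $\{T^n:n\ge 0\}$ is equicontinuous.} This reduction is elementary once $S$ is in hand: if $N$ is a gap bound for $S$, then every $T^n$ with $n\ge N$ factors as $T^n=T^j\circ T^a$ with $a\in S$ and $0\le j\le N$, so $\{T^n\}$ is contained, up to finitely many initial iterates, in $\bigcup_{j=0}^{N}T^j\{T^s:s\in S\}$; since left composition by each of the finitely many uniformly continuous maps $T^0,\dots,T^N$ carries an equicontinuous family to an equicontinuous family, and a finite union of such is equicontinuous, the conclusion follows. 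The real obstacle is that (2) supplies, for each $\epsilon$, only a syndetic set $\mathcal{A}_\epsilon$ whose gap bound $N_\epsilon$ may grow as $\epsilon\to 0$; one cannot naively run the gap-filling, because the filler $T^j$ can \emph{expand} distances and $j$ ranges up to the unbounded $N_\epsilon$, so fixing the uniform-continuity modulus before the gap bound is circular. I therefore expect the crux to be promoting the $\epsilon$-dependent data of (2) to one equicontinuous syndetic family $S$ --- for instance by a diagonal/compactness argument in the enveloping semigroup $\overline{\{T^n\}}\subseteq X^X$ (pointwise limits of $\epsilon$-equicontinuous maps retain the modulus), or equivalently by producing a metric $\rho$ equivalent to $d$ for which $T$ is non-expansive. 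Controlling the gap bounds uniformly in $\epsilon$ is the step on which the whole equivalence $(1)\Leftrightarrow(2)$ rests.
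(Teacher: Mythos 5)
Your easy implications, and your proofs of $(3)\Rightarrow(4)$ and $(4)\Rightarrow(5)$, are correct and essentially the paper's own arguments (the paper proves $(3)\Rightarrow(5)$ directly, but given $(3)\Leftrightarrow(4)$ that is the same finite-subcover-plus-maximal-gap argument). The genuine gap is $(2)\Rightarrow(1)$: you never prove it. You reduce it to the statement ``if a \emph{single} syndetic $S$ makes $\{T^s:s\in S\}$ an equicontinuous family, then $(X,T)$ is equicontinuous,'' correctly observe that (2) only supplies an $\epsilon$-dependent set $\mathcal{A}_\epsilon$, and then leave the promotion of this $\epsilon$-dependent data to one uniform $S$ as a hope (a ``diagonal/compactness argument in the enveloping semigroup''). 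That promotion is exactly as strong as (1) itself (once (1) holds, take $S=\mathbb{N}$), and your sketched route has a concrete obstruction: the sets $\mathcal{A}_\epsilon$ for different values of $\epsilon$ need not have syndetic --- or even nonempty --- intersection, so no diagonal extraction of a single syndetic $S$ is available. Thus the only substantive implication of the proposition is missing.

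The fix you missed is to reverse the order of composition in the gap-filling, which removes the circularity you (correctly) identified. Your decomposition is $T^n=T^j\circ T^a$ with the bounded filler $T^j$ \emph{outside}, so you need the orbit points $T^ax$, $T^ay$ to be closer than a modulus that depends on the gap bound --- circular. Instead write $j=n+i$ with $n\in\mathcal{A}$ and $0\le i\le m$, i.e.\ $T^j=T^n\circ T^i$ with the filler \emph{inside}. Fix $\epsilon>0$; apply (2) at scale $\epsilon$ to get $\delta\in(0,\epsilon)$ and a syndetic $\mathcal{A}$ with gap bound $m$; then, since $T^0,\dots,T^m$ are finitely many uniformly continuous maps, choose $\delta'>0$ such that $d(x,y)<\delta'$ implies $d(T^ix,T^iy)<\delta$ for all $0\le i\le m$. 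Because (2) is uniform over \emph{all} pairs of points, it applies to the pair $(T^ix,T^iy)$, giving $d(T^{n+i}x,T^{n+i}y)<\epsilon$ for every $n\in\mathcal{A}$. Every $j>m$ has the form $n+i$ with $n\in\mathcal{A}$, $0\le i\le m$, and for $j\le m$ one has $d(T^jx,T^jy)<\delta<\epsilon$ directly; hence $d(x,y)<\delta'$ forces $d(T^jx,T^jy)<\epsilon$ for all $j$, which is (1). The dependence order $\epsilon\mapsto(\delta,\mathcal{A},m)\mapsto\delta'$ is linear, so there is no circularity; this is precisely the paper's short proof.
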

\begin{proof}
It suffices to prove $(2)\Longrightarrow (1)$, $(3)\Longrightarrow (4)$ and $(3)\Longrightarrow (5)$.


\noindent $(2)\Longrightarrow (1)$:
Let $\epsilon> 0$. By the condition (2) there exist $\epsilon> \delta> 0$ and syndetic $\mathcal{A}\subset \mathbb{N}$ such that $d (x, y)< \delta$ implies $d (T^n x, T^n y)<
\epsilon$ for any $x,y \in X$ and  $n\in \mathcal{A}$. Since syndetic sets have ``bounded gaps", there exists $m\in \mathbb{N}$ such
that $\{n+ i: n\in A, i= 0, 1, \dots, m\}\supset \{m+ 1, m+ 2, \dots\}$.  Therefore there exists $\delta'> 0$ such  that, for any $x,y \in X$ and $i= 0, 1, \dots, m$, $d (x, y)< \delta'$
implies $d (T^i x, T^i y)< \delta< \epsilon$ (hence, by the selection of $\delta$, $d (T^{n+ i} x, T^{n+ i} y)< \epsilon$ for all $n\in \mathcal{A}$).
So, $d (x, y)< \delta'$ implies $d (T^j x, T^j y)< \epsilon$ for any $x,y \in X$ and $j\in \mathbb{N}$, i.e., $(X, T)$ is equicontinuous.


\noindent $(3)\Longrightarrow (4)$:
Let $\epsilon> 0$. Since $\text{Eq}_{\text{syn}} (X, T)= X$, for any $x\in X$ there exist open $U_x\subset X$ containing $x$ and a syndetic set $\mathcal{A}_x\subset \mathbb{N}$ such
that  $d (T^n x, T^n x')< \epsilon$ for all $x'\in U_x$ whenever $n\in \mathcal{A}_x$ (and hence $d (T^n x', T^n x'')< 2 \epsilon$ whenever $x', x''\in U_x$). Observe that $X$ is a compact metric space, we can take a set of points $\{x_1, \dots, x_s\} \subset X$ such that $\{U_{x_j}: j= 1, \dots, s\}$ forms
an open cover of $X$. Then there exists $\delta> 0$ such that, every subset $U\subset X$ with $\text{diam} (U)< \delta$ is contained in some  $U_{x_j}$, and then
$d (T^n x, T^n y)< 2 \epsilon$ whenever $x, y\in U$ and $n\in \mathcal{A}_{x_j}$.


\noindent $(3)\Longrightarrow (5)$: The proof is similar to that of $(3)\Longrightarrow (4)$.
Let $\epsilon> 0$. For any $x\in X$, the open set $U_x\subset X$ containing $x$, the syndetic set $\mathcal{A}_x\subset \mathbb{N}$ and the set $\{x_1, \dots, x_s\} \subset X$ are constructed as in the proof of $(3)\Longrightarrow (4)$. We
take $m_x\in \mathbb{N}$ such that $\{n, n+ 1, \dots, n+ m_x\}\cap \mathcal{A}_x\neq \varnothing$ for each $n\in \mathbb{N}$, and therefore
$\min_{0\le i\le m_x}d (T^{n+ i} x', T^{n+ i} x'')<
2 \epsilon$ for any $x', x''\in U_x$ and $n\in \mathbb{N}$. Set $m= \max \{m_{x_j}: j= 1, \dots, s\}$.
By the construction there exists $\delta> 0$ such that any points $x,y\in X$ with $d (x, y)< \delta$ are contained in some $U_{x_j}$, and so for each $n\in \mathbb{N}$,
$\min_{0\le i\le m}d (T^{n+ i} x, T^{n+ i} y)\le \min_{0\le i\le m_{x_j}}d (T^{n+ i} x, T^{n+ i} y)< 2 \epsilon$.
\end{proof}

Remark that by \cite[Example 3.6]{HKZ} there is a sensitive transitive non-minimal system $(X, T)$ with $\text{Eq}_{\text{syn}} (X, T)= X$.
In fact, it is not hard to see the difference between the conditions (2), (4) and (5). Precisely, given the parameters $\epsilon$ and $\delta$:
\begin{enumerate}

\item
 in the condition (2) the syndetic subset $\mathcal{A}\subset \mathbb{N}$ is independent of all $x, y\in X$ with $d (x, y)< \delta$;

\item
in the condition (4) the syndetic subset $\mathcal{A}\subset \mathbb{N}$ depends on every subset $U\subset X$ with $\text{diam} (U)< \delta$;

\item
the condition (5) is equivalent to say, for all $x, y\in X$ with $d (x, y)< \delta$, there exists a syndetic subset $\mathcal{A}_{x, y}\subset \mathbb{N}$ (with a uniform bound $m\in \mathbb{N}$ for the gaps) such that $d (T^n x, T^n y)<
\epsilon$ for all $n\in \mathcal{A}_{x, y}$, here the syndetic subset $\mathcal{A}_{x, y}$ depends on $x$ and $y$ with $d (x, y)< \delta$.
\end{enumerate}


At the end of this section, following the proof of \cite[Lemma 4.1]{HKZ} we give a sufficient condition for its each point being syndetically equicontinuous.

\begin{lem} \label{1407070139}
Let $\pi: (X, T)\rightarrow (Y, S)$ be a factor map, where $(Y, S)$ is a minimal equicontinuous system. Assume that $y_0$ belongs to $Y$, such that $\pi^{- 1} (y_0)$
is a singleton. Then $\text{Eq}_{\text{syn}} (X, T)= X$.
\end{lem}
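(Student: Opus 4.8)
The plan is to verify that every point $x\in X$ is syndetically equicontinuous; since $x$ is arbitrary this yields $\text{Eq}_{\text{syn}}(X,T)=X$. Write $x_0$ for the unique point of $\pi^{-1}(y_0)$, fix a compatible metric $\rho$ on $Y$, and fix $\epsilon>0$. The whole argument rests on three ingredients — the singleton fibre over $y_0$, equicontinuity of the base $(Y,S)$, and minimality of $(Y,S)$ — combined so that the orbit of $\pi(x)$ returns syndetically to a neighbourhood of $y_0$, forcing nearby orbits in $X$ to synchronize near $x_0$.

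First I would extract from the singleton hypothesis the only nontrivial estimate. By a compactness argument I claim there is $\eta>0$ with $\pi^{-1}\big(\overline{B_Y(y_0,2\eta)}\big)\subset B(x_0,\epsilon/2)$, where $B$ denotes the ball in $(X,d)$ and $B_Y$ the ball in $(Y,\rho)$: if no such $\eta$ existed one could pick $z_k$ with $\rho(\pi(z_k),y_0)\to 0$ yet $d(z_k,x_0)\ge \epsilon/2$, and any convergent subsequence would produce a point $z\neq x_0$ with $\pi(z)=y_0$, contradicting $\pi^{-1}(y_0)=\{x_0\}$.

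Next I bring in the two properties of $(Y,S)$. Equicontinuity furnishes $\delta>0$ (depending on $\eta$) such that $\rho(w,w')<\delta$ implies $\rho(S^nw,S^nw')<\eta$ for all $n\ge 0$. Minimality, applied to the opene set $V:=B_Y(y_0,\eta)$, gives that the visiting set $N_S(\pi(x),V)=\{n\in\mathbb{Z}_+:S^n\pi(x)\in V\}$ is syndetic; this is the standard fact that in a minimal system one may cover $Y$ by finitely many $S^{-n_i}V$, so the return gaps are bounded by $\max_i n_i$. Using continuity of $\pi$ I then choose an open neighbourhood $U\ni x$ with $\rho(\pi(x'),\pi(x))<\delta$ for all $x'\in U$.

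Finally comes the synchronization step. For $n\in N_S(\pi(x),V)$ one has $\rho(S^n\pi(x),y_0)<\eta$, while for $x'\in U$ the choice of $\delta$ gives $\rho(S^n\pi(x'),S^n\pi(x))<\eta$; hence both $S^n\pi(x)$ and $S^n\pi(x')$ lie in $\overline{B_Y(y_0,2\eta)}$. Since $\pi\circ T^n=S^n\circ\pi$, the first step places both $T^nx$ and $T^nx'$ in $B(x_0,\epsilon/2)$, so $d(T^nx,T^nx')<\epsilon$ for every $x'\in U$ and every $n$ in the syndetic set $N_S(\pi(x),V)\cap\mathbb{N}$. This is precisely syndetic equicontinuity of $x$, completing the proof. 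The routine part is matching the radii $\eta$, $2\eta$, $\epsilon/2$; the one step needing care is the compactness transfer of the singleton fibre into the uniform inclusion $\pi^{-1}\big(\overline{B_Y(y_0,2\eta)}\big)\subset B(x_0,\epsilon/2)$, while everything else is a standard use of equicontinuity and minimality of the base.
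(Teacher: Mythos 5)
Your proof is correct and follows essentially the same route as the paper's: the compactness transfer of the singleton fibre into a uniform inclusion $\pi^{-1}(V_0)\subset U_0$, equicontinuity of the base to keep $S^n\pi(x')$ near $S^n\pi(x)$, and minimality to make the visits to a small neighbourhood of $y_0$ syndetic. The only cosmetic difference is that you obtain the syndetic set directly as $N_S(\pi(x),V)$ via the finite-cover argument, whereas the paper first pushes $\pi(x)$ into $V_1$ at some time $m$ and then uses Gottschalk's theorem, taking $m+N_S(S^m\pi(x),V_1)$.
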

\begin{proof}
We choose a compatible metric $\rho$ over $Y$, and recall that $d$ is the metric over $X$. Let $x\in X$ and $\delta> 0$. Say $\{x_0\}= \pi^{- 1} (y_0)$. We take open $U_0\subset X$ containing $x_0$ such that $\text{diam} (U_0)< \delta$, and then take $\delta_1> 0$ such that $\pi^{- 1} (V_0)\subset U_0$ where $V_0= \{y\in Y: \rho (y_0, y)< 2 \delta_1\}$. As $(Y, S)$ is equicontinuous, there exists $\delta_1\ge \epsilon_1> 0$ such that $\rho (S^n y_1, S^n y_2)< \delta_1$ whenever $\rho (y_1, y_2)< \epsilon_1$ and $n\in \mathbb{N}$.
Set $V_1= \{y\in Y: \rho (y_0, y)< \delta_1\}\subset V_0$. By the minimality of  $(Y, S)$, there exists $m\in \mathbb{N}$ such that $S^m (\pi x)\in V_1$, and
then put $\mathcal{S}= N_S (S^m (\pi x), V_1)$ which is a syndetic set.

Now take open $U_1\subset X$ containing $x$ such that $\text{diam} (\pi U_1)< \epsilon_1$.
Let $n\in \mathcal{S}$ and $x'\in U_1$. Then $S^{n+ m} (\pi x)\in V_1$, and $\rho (\pi x, \pi x')< \epsilon_1$ and so $\rho (S^{n+ m} \pi x, S^{n+ m} \pi x')< \delta_1$. Thus $\pi (T^{n+ m} x')= S^{n+ m} (\pi x')\in V_0$, and then $T^{n+ m} x'\in U_0$ by the construction of $V_0$.
In particular, $d (T^{n+ m} x, T^{n+ m} x')< \delta$.
That is, $x\in \text{Eq}_{\text{syn}} (X, T)$, because $m+ \mathcal{S}\subset \mathbb{N}$ is a syndetic set. This finishes the proof.
\end{proof}

\section{Related properties and open questions}

A family $\mathcal{F}$ is \emph{proper} if it is a proper subset
of $\mathcal{P}$, i.e., neither empty nor all of $\mathcal{P}$.
If
a proper family $\mathcal{F}$ satisfies $\mathcal{F}\cdot \mathcal{F}\subset \mathcal{F}$, then it is called a \emph{filter}, where
$\mathcal{F}\cdot \mathcal{F}$ is defined as $\{F_1\cap F_2: F_1, F_2\in \mathcal{F}\}$.
Recall that in \cite{Furstenberg1967} H. Furstenberg has showed that a dynamical system $(X,T)$ is weakly mixing if and only if $\mathcal{N}_T$ is
a filter.
 The inverse problem --- when a filter $\mathcal{F}$  can be realized via  $\mathcal{N}_T$ for a dynamical system $(X,T)$,
is very hard (see, for instance \cite{ShYe2004}).

For $\delta> 0$, denote by $\mathcal{S}_T (\delta)$
the set of all subsets of $\mathbb{Z}_+$ containing $S_T (U, \delta)$ for some opene subset $U$ of $X$; and denote by $\mathcal{S}_T$ the set of all subsets of $\mathbb{Z}_+$ which come from $\mathcal{S}_T (\delta)$ for some $\delta> 0$.
 It seems that this problem is even much harder for a filter realization via
$\mathcal{S}_T (\delta)$ for a sensitive dynamical system $(X,T)$ and some $\delta> 0$. So, let $(X,T)$
be a weakly mixing system and $\delta < \diam(X)$. Then $S_T (U, \delta)\supset N_T (U, V_1)\cap N_T (U, V_2)\supset N_T (U', V')$ for some opene subsets $U', V'$ of $X$ by \cite{Furstenberg1967}, when opene subsets $U, V_1, V_2$ satisfy
$\dist (V_1, V_2)> \delta$, and so $\mathcal{S}_T (\delta) \subset \mathcal{N}_T$.
We say that a dynamical system $(X,T)$ is \emph{filter-sensitive} if there is $\delta >0$ such that $\mathcal{S}_T (\delta)$ is a filter. So, obviously that
a topologically mixing system is filter-sensitive, and one can show that any filter-sensitive system is thick sensitive.  We do not know if they are
equivalent for the transitive case, in particular, we do not know if any weakly mixing system is filter-sensitive. We see here at least two problems with realization. Take two elements $S_T(U,\delta)$ and $S_T(V,\delta)$
of
$\mathcal{S}_T (\delta)$ of a sensitive dynamical system $(X,T)$. When the intersection $S_T(U,\delta) \cap S_T(V,\delta)$ is again in $\mathcal{S}_T (\delta)$? The second question is
if there is a realization of a filter-sensitive system? Similar question is also with following situation. Let $\mathcal{F}$ be the family of all thickly syndetic
subsets, which can be easily checked to be a filter. What about the realization this family via $\mathcal{S}_T (\delta)$
for a thickly syndetically sensitive system $(X,T)$?

V{\'{\i}}ctor Jim{\'e}nez~L{\'o}pez and L'ubom{\'{\i}}r Snoha introduced and studied the notion of the (\emph{Misiurewicz})
\emph{stroboscopical property} in \cite{JiSn2003}. One can study the stroboscopical property via Furstenberg families. Let $\mathcal{F}$ be a (Furstenberg) family.
We will say a system $(X,T)$ satisfies \emph{$\mathcal{F}$-stroboscopical property} if for any $A\in \mathcal{F}$ and $z\in X$ there is a point
$x\in X$ with $z\in \omega_T(A,x)$, where $\omega_T(A,x)$ is the set of all limit points of the set $T^A(x)=\{T^i x: i\in A\}$. Moreover, we will
say that the system has \emph{strongly $\mathcal{F}$-stroboscopical property} if any $A\in \mathcal{F}$ and $z\in X$ the set of such
$x$ is dense.

Recall that a system $(X,T)$ is \emph{$\mathcal{F}$-transitive} if $N_T(U,V)\in \mathcal{F}$ for any opene sets $U,V \subset X$.
If a  system $(X,T)$ is  $\mathcal{F}$-transitive, then not so hard to show (say similarly as it was done in Lemma \ref{02000}) that for any  $F \in k\mathcal{F}$ and opene sets
$U,V$ in $X$ the set $F\cap N_T(U,V)$ is infinite.

\begin{prop} The following conditions are equivalent:
\begin{enumerate}

\item[(1)] $(X,T)$ has strongly $\mathcal{F}$-stroboscopical property.

\item[(2)] $(X,T)$ is $k\mathcal{F}$-transitive.
\end{enumerate}
\end{prop}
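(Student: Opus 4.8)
The plan is to prove the two implications separately, in each case converting the membership $z\in\omega_T(A,x)$ into the combinatorial statement that $A\cap N_T(x,G)$ is infinite for every neighborhood $G$ of $z$ (equivalently, that $T^{i_n}x\to z$ along some sequence $i_n\in A$). The cheap direction is $(1)\Rightarrow(2)$, and the substantive one is $(2)\Rightarrow(1)$.

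\noindent $(1)\Rightarrow(2)$: Fix $A\in\mathcal{F}$ and opene $U,V$; it suffices to produce $i\in N_T(U,V)\cap A$, since by the arbitrariness of $A\in\mathcal{F}$ this gives $N_T(U,V)\in k\mathcal{F}$, and then the arbitrariness of $U,V$ gives $k\mathcal{F}$-transitivity. I would pick any point $z\in V$ and apply the strong $\mathcal{F}$-stroboscopical property to $A$ and $z$: the set of $x$ with $z\in\omega_T(A,x)$ is dense, hence meets the opene set $U$, so I may choose such an $x\in U$. Since $V$ is a neighborhood of $z$, the relation $z\in\omega_T(A,x)$ furnishes some $i\in A$ with $T^ix\in V$; together with $x\in U$ this means $x\in U\cap T^{-i}V$, i.e.\ $i\in N_T(U,V)\cap A$. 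This direction is immediate once the definitions are unfolded.

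\noindent $(2)\Rightarrow(1)$: The first step is to record the quantitative input. Because $(X,T)$ is $k\mathcal{F}$-transitive and $\mathcal{F}\subseteq kk\mathcal{F}$ (any $A\in\mathcal{F}$ meets every member of $k\mathcal{F}$ by definition of the dual family), the fact stated just before the proposition --- read with $k\mathcal{F}$ in the role of the transitivity family and $A\in\mathcal{F}\subseteq kk\mathcal{F}$ in the role of the dual element --- guarantees that $A\cap N_T(U,V)$ is \emph{infinite} for every $A\in\mathcal{F}$ and all opene $U,V$. Now fix $A\in\mathcal{F}$, a point $z\in X$, and an arbitrary opene set $W$ in which I must locate a suitable $x$. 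I would choose a decreasing neighborhood basis $G_1\supseteq G_2\supseteq\cdots$ at $z$ and an enumeration $(k_j)_{j\ge1}$ of $\mathbb{N}$ in which every integer recurs infinitely often, and then build a nested sequence of opene sets $W=W_0\supseteq\overline{W_1}\supseteq W_1\supseteq\overline{W_2}\supseteq\cdots$ together with integers $n_1<n_2<\cdots$ in $A$: at step $j$, infiniteness of $A\cap N_T(W_{j-1},G_{k_j})$ lets me pick $n_j\in A$ with $n_j>n_{j-1}$ and $W_{j-1}\cap T^{-n_j}G_{k_j}\neq\varnothing$, after which I take an opene $W_j$ with $\overline{W_j}\subseteq W_{j-1}\cap T^{-n_j}G_{k_j}$ (possible since $X$ is a metric space). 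Any $x\in\bigcap_j\overline{W_j}$ (nonempty by compactness of the decreasing closed sets) lies in $\overline{W_1}\subseteq W$ and satisfies $T^{n_j}x\in G_{k_j}$ for all $j$; since each value of $k$ occurs for infinitely many $j$ and the $n_j$ strictly increase, $A\cap N_T(x,G_k)$ is infinite for every $k$, and hence $A\cap N_T(x,G)$ is infinite for every neighborhood $G$ of $z$, i.e.\ $z\in\omega_T(A,x)$. As $W$ was arbitrary, these $x$ form a dense set, which is exactly the strong $\mathcal{F}$-stroboscopical property.

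The main obstacle lies entirely in the $(2)\Rightarrow(1)$ construction, and it has two facets. First, one must upgrade the \emph{nonempty} intersection built into $k\mathcal{F}$-transitivity to genuine \emph{infiniteness} of $A\cap N_T(W_{j-1},G_{k_j})$; without this one could not keep selecting $n_j>n_{j-1}$, and the $\omega$-limit would fail to form. This is precisely where the inclusion $\mathcal{F}\subseteq kk\mathcal{F}$ lets me invoke the preceding fact (whose own proof uses that $X$ has no isolated points). Second, there is the bookkeeping that forces every basic neighborhood $G_k$ to be visited infinitely often --- handled by the recurring enumeration $(k_j)$ --- which is what promotes ``$z$ is approached once'' to the full statement $z\in\omega_T(A,x)$. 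The nesting argument itself needs only that $X$ is a compact metric space.
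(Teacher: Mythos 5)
Your proof is correct and follows essentially the same route as the paper: the easy direction is just the direct form of the paper's contrapositive argument, and in $(2)\Rightarrow(1)$ you invoke the same infiniteness fact stated before the proposition (justified via $\mathcal{F}\subseteq kk\mathcal{F}$) and run the same nested-opene-sets construction with compactness of the decreasing closures. The only cosmetic difference is that you revisit each basic neighborhood of $z$ infinitely often through a recurring enumeration, whereas the paper simply lets the target balls $B_k$ shrink to $z$, so that $T^{n_k}x\to z$ automatically.
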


\begin{proof} (1)$\Rightarrow$ (2) Assume that the system is not $k\mathcal{F}$-transitive. Then there is a pair of opene sets
$U,V$ for which $N_T(U,V)\notin k\mathcal{F}$. Take $F\in \mathcal{F}$ with $N_T(U,V)\cap F=\varnothing$. Hence
$T^nU\cap V=\varnothing$ for all $n\in F$. Since it, for any $z\in V$ there is no point $x\in U$ with  $z\in \omega_T(F,x)$.
A contradiction to the assumption.

(2)$\Rightarrow$ (1) Fix $z\in X, F\in \mathcal{F}$ and an opene $G\subset X$. We are going to find $x\in G$ with $z\in \omega_T(F,x)$.
Let $B_n$ be the opene ball of radius $\frac{1}{n}$ centred at $z$ and $G_1=G$. Then  $N_T(G_1,B_1)\in k\mathcal{F}$ and there is $n_1\in F$
 such that $T^{n_1}G_1\cap B_1\neq \varnothing$. Take opene $G_2\subset \overline{G_2}\subset G_1\cap T^{-n_1}B_1$ and proceed by induction.

 Suppose we have defined $G_k$. Since $N_T(G_k,B_k)\cap F=\varnothing$ is infinite, we can choose in $F$ an integer $n_k >n_{k-1}$ with
 $T^{n_k}G_k\cap B_k\neq \varnothing$. Finally, let $G_{k+1}$ be an opene set in $X$ such set $G_{k+1}\subset \overline{G_{k+1}}\subset G_k
 \cap T^{-n_k}B_k$.

 We have found a sequence $\{ n_k\}_{k=1}^\infty \in F$ such that $T^{n_{k- 1}}G_k \subset B_{k-1}$ for $k \geq 2$. So, we have $\varnothing
 \neq \bigcap_{k=1}^\infty \overline{G_k} = \bigcap_{k=1}^\infty G_k$, because $\overline{G_k}$ are nested closed sets. Therefore there is
 $x\in \bigcap_{k=1}^\infty G_k$ and $T^{n_k}x \in B_k$ for any $k$. Thus $z\in \omega_T(F,x)$.
\end{proof}

We end this section with the following questions:

\medskip



\noindent \textbf{Question 1.} If $(X,T)$ is a weakly mixing system, then does it follow that
for any point $x\in X$ there exists a  point $y\in X$
such that $\omega_T(y) = \omega_{\mathcal{N}_T}(x)$?

\medskip

\noindent \textbf{Question 2.}  Are all non-minimal M-systems Li-Yorke sensitive?

\medskip

\bibliographystyle{amsplain}


\end{document}